\setlist[enumerate]{label=\rm{(\arabic*)}, ref=(\arabic*)}
\DeclareMathOperator{\St}{St}
\DeclareMathOperator{\Aut}{Aut}
\DeclareMathOperator{\rist}{Rist}
\DeclareMathOperator{\Srist}{SRist}
\newcommand*{\N}{\mathbf{N}}
\newcommand*{\Z}{\mathbf{Z}}
\newcommand*{\PP}{\mathcal{P}}
\newcommand*{\VV}{\mathcal{V}}
\newcommand*{\C}{\mathcal{C}}
\newcommand*{\Grig}{\mathcal{G}}
\newtheorem{thm}{Theorem}[section]
\newtheorem{lemma}[thm]{Lemma}
\newtheorem{prop}[thm]{Proposition}
\newtheorem{cor}[thm]{Corollary}
\theoremstyle{definition}
\newtheorem{qu}[thm]{Question}
\newtheorem{defn}[thm]{Definition}
\newtheorem{rem}[thm]{Remark}
\newtheorem{example}[thm]{Example}
\title[Structure of finitely generated subgroups of branch groups]{On the structure of finitely generated subgroups of branch groups}
\author[D. Francoeur]{Dominik Francoeur}
\address{Dominik Francoeur, Department of Mathematics, Universidad Autónoma de Madrid, Campus Cantoblanco UAM, Madrid, Spain; \texttt{dominik.francoeur@uam.es}}
\author[R. Grigorchuk]{Rostislav Grigorchuk}
\address{Rostislav Grigorchuk, Department of Mathematics, Texas A\&M University, 77843~College Station, U.S.A.; \texttt{grigorch@math.tamu.edu}}
\author[P.-H. Leemann]{Paul-Henry Leemann}
\address{Paul-Henry Leemann, Department of Pure Mathematics, Xi'an Jiaotong--Liverpool University, Suzhou, P.R. China; \texttt{PaulHenry.Leemann@xjtlu.edu.cn}}
\author[T. Nagnibeda]{Tatiana Nagnibeda}
\address{Tatiana Nagnibeda, Section de math\'ematiques, Universit\'e de Gen\`eve, 1205~Gen\`eve, Switzerland; \texttt{Tatiana.Nagnibeda@unige.ch}}
\date{\today}
\begin{document}
\begin{abstract}
Motivated by the study of profinite topology in branch groups, we prove a structural result about finitely generated subgroups in branch groups. More precisely, we show that finitely generated subgroups of a branch group with the subgroup induction property have a block structure, which roughly means that, up to a finite index, they are products of finite index subgroups, embedded in the group in a way that is coherent with its branch action on the rooted tree.
\end{abstract}

\maketitle

\section{Introduction}
 
\emph{Profinite topology} is a natural topology to consider on a group, with a basis given by all finite index subgroups (and their cosets).
It is important to understand closed subgroups, and more generally closed subsets, for this topology.
Such a study naturally begins with the question: is the trivial subgroup closed? The answer is positive exactly for the residually finite groups, in which case all finite subgroups are closed. The next step is to study closure properties of finitely generated subgroups. A group is said to be \emph{subgroup separable} or \emph{LERF} (which stands for locally extensively residually finite) if all of its finitely generated subgroups are closed in the profinite topology. 
The class of LERF groups contains finite groups; finitely generated abelian groups; virtually polycyclic groups: in fact, all subgroups in these groups are closed in the profinite topology~\cite{Malcev}. By a celebrated theorem of Marshall Hall and it subsequent extensions, it also contains finitely generated free groups; surface groups and more generally, limit groups~\cite{MR2399104}. The LERF property is also known to hold for certain, but not for all, right-angled Artin groups; for free metabelian groups, but not for free solvable groups of derived length bigger than~$2$; and also for some other examples (see discussions in~\cite{MR2399104,Coulbois}); it remains in general not well understood. 
A stronger condition called the \emph{Ribes-Zalesskii property} asks that all  products of finitely many finitely generated subgroups be closed in the profinite topology~\cite{MR1190361}. Besides finite groups and finitely generated abelian groups, it has so far been shown to hold for free groups, limit groups, Kleinian groups and a few additional examples, and to be closed under the operations of subgroups, finite index supergroups and free products, see~\cite{Coulbois,MM} and the references therein.

The study of LERF property for branch groups was initiated by the second author and Wilson in~\cite{GrigorchukWilson03}.
Further examples of branch groups with the LERF properties can be found in~\cite{Garrido16,FrancoeurLeemann20}.
One of the motivations of the present work is to understand the structure of finitely generated subgroups in branch groups in view of better understanding the LERF property, the Ribes-Zalesskii property and more generally, the nature of subsets closed in the profinite topology, for this class of groups. 

\emph{Branch groups} are groups acting faithfully on a spherically homogeneous rooted tree in a fashion similar to the action of the full automorphism group of the tree, see Definition~\ref{defn:branch}.
They constitute a class of residually finite groups with a rich subgroup structure which contains many exotic examples of groups including the first example of a group of intermediate growth, constructed by the second author in~\cite{Grigorchuk80} and now known as the \emph{(first) Grigorchuk group} (throughout  the article  this  group  will  be  denoted  by  $\Grig$; see Example~\ref{example:GrigorchukGroup} for a definition). Branch groups appear naturally as one of three types of \emph{just infinite groups} (see~\cite{Wilson} and~\cite{GrigorchukWilson03}), that is, infinite groups with only finite proper quotients. Many examples of branch groups can be found among self-similar groups. Recall that a group of automorphisms of a regular rooted tree is \emph{self-similar} if the sections of its elements to the subtrees rooted at vertices of the tree are again elements of this group, see Definition~\ref{defn:selfsim}. If moreover the section maps are surjective for every vertex of the tree, the group is said to be \emph{self-replicating}.

The lattice of subgroups in branch groups is being extensively studied. In particular, such questions as the \emph{congruence subgroup property} (see Remark~\ref{rem:CSP}) or the nature of maximal subgroups received considerable attention.
Concerning the latter, Pervova showed~\cite{MR1841763} that the maximal subgroups of $\Grig$ are necessarily of finite index, moreover they are all of index 2 and hence are normal.
Examples are known of  branch groups with maximal subgroups of infinite index~\cite{MR2727305,MR3886188}, but these seem rather to be an exception, see~\cite{Francoeur2020}. In contrast, the variety of \emph{weakly maximal subgroups} (i.e., subgroups that are maximal among infinite index subgroups) is very rich in any branch group~\cite{MR3478865,Leemann20}. Under certain assumptions, weakly maximal subgroups, whether they are finitely generated or not, are closed in  the profinite topology~\cite{GrigorchukLeemannNagnibeda21,FrancoeurLeemann20}.
Another motivation for our work is to better understand weakly maximal subgroups in branch groups.

First ideas for the present work appeared in the second and the last authors' short note~\cite{GN-Oberwolfach}, where the notion of a \emph{block subgroup} of a self-similar group was introduced.
Roughly speaking, a block subgroup of a self-similar group $G$ is a product of copies of some finite index subgroups of $G$, some of them embedded diagonally. See Figure~\ref{fig:blockintro} for an illustration of this notion and Definitions~\ref{defn:diag} and~\ref{defn:Block} for a formal definition.
The definition in~\cite{GN-Oberwolfach} was inspired by Pervova's explicit example of a finitely generated weakly maximal subgroup of $\Grig$, see~\cite{MR2893544}. The only examples of weakly maximal subgroups of $\Grig$ known previously were the so-called \emph{parabolic subgroups}, i.e., the stabilisers of infinite rays in the infinite complete binary tree, which are all not finitely generated.
\begin{figure}[htbp]
\includegraphics{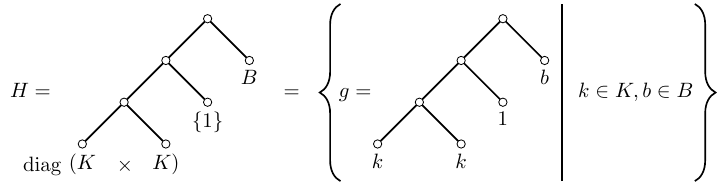}
\caption{A block subgroup of a self-similar group $G$. Here $B$ and $K$ are finite index subgroups of $G$.}
\label{fig:blockintro}
\end{figure}

Theorem 1 announced in~\cite{GN-Oberwolfach} claims that any finitely generated subgroup of $\Grig$ is virtually a block subgroup. 
In this paper, we prove this result for a larger family of self-similar branch groups.
This family is defined by two conditions. The first one is called the \emph{subgroup induction property} (see Definition~\ref{defn:SIP}). It first appeared in the second author and Wilson's proof of the LERF property for the Grigorchuk group $\Grig$~\cite{GrigorchukWilson03}, and it is also known to hold for all the so called torsion \emph{GGS groups}~\cite{FrancoeurLeemann20}, see Example~\ref{example:GGS}. The second condition is what we call \emph{tree-primitivity}. Roughly, an action of a group of automorphisms of a rooted tree is tree-primitive if it is as primitive as it can be; given that an action on a rooted tree can never be primitive, as it must preserve the partition of the tree into levels; see Definition~\ref{defn:treeprim}.

\begin{thm}[Theorem~\ref{thm:BlockSubgroups}]\label{thm:A}
Let $G$ be a finitely generated self-replicating branch group with the subgroup induction property acting tree-primitively on a regular rooted tree, and let $H\leq G$ be a finitely generated subgroup of $G$. Then, $H$ is virtually a block subgroup.
\end{thm}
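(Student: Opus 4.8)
The plan is to prove the theorem by induction on the number of generators of $H$, exploiting the self-replicating branch structure together with the subgroup induction property. First I would recall that the subgroup induction property says precisely that the class of subgroups obtainable from the trivial subgroup and $G$ itself by repeatedly taking finitely generated subgroups and ``sections/suborbits'' is closed; concretely, if $H \leq G$ is finitely generated and $L \leq \St_H(\text{level } 1)$, then each projection of $L$ to a vertex of the first level is again a subgroup to which the inductive hypothesis applies. So the engine of the argument is: pass from $H$ to its level-$1$ stabiliser $\St_H(1)$, which has finite index in $H$, and analyse $\St_H(1)$ through its images under the section maps $\varphi_v \colon \St_H(1) \to G$ at the vertices $v$ of the first level.

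The key steps, in order, would be as follows. Step 1: reduce to the case where $H$ fixes the first level, i.e.\ replace $H$ by $\St_H(1)$; since being virtually a block subgroup is insensitive to passing to finite-index subgroups (and, in the other direction, one must check that a finite-index overgroup of a virtual block subgroup is again one, which should follow from the definitions of block and diagonal subgroups), this is harmless. Step 2: consider the section images $H_v := \varphi_v(\St_H(1)) \leq G$ for $v$ on the first level. Each $H_v$ is finitely generated, so by induction (on generator count, or on a more subtle complexity measure — see below) each $H_v$ is virtually a block subgroup. Step 3: analyse how $\St_H(1)$ sits inside $\prod_v H_v$. There are two regimes. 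If some $H_v$ has infinite index in $G$, then the rigid stabiliser considerations and the branch property force $\St_H(1)$ (hence $H$) to have infinite index, and one pushes the whole analysis down into the subtrees, assembling a block subgroup from the blocks produced at the lower level. If instead every $H_v$ has finite index in $G$, then using self-replication and the branch property (in particular that $G$ contains $\prod_v \rist_G(v)$ with finite index in a suitable sense), one shows $\St_H(1)$ already contains a product of finite-index subgroups up to finite index, and the ``diagonal'' part of the block structure records exactly the identifications between coordinates forced by the finitely many generators of $H$. Step 4: invoke tree-primitivity to control the subgroups of $G$ that can appear as the $H_v$: tree-primitivity should guarantee that the relevant finite-index subgroups are either all of $G$ or sit in a unique maximal one, which is what lets the diagonal/block description be uniform and finite, preventing the induction from producing unboundedly complicated data.

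The main obstacle I anticipate is setting up the \textbf{right induction parameter} so that the recursion actually terminates. Naively inducting on the number of generators fails because the section $\varphi_v(\St_H(1))$ need not have fewer generators than $H$ — passing to a finite-index subgroup can increase the generating rank. One therefore needs a cleverer well-founded measure: perhaps a lexicographic pair combining (a) whether the $H_v$ are proper or not, controlled via tree-primitivity so that properness strictly decreases ``depth'', with (b) some secondary invariant like the minimal level at which $H$ already looks like a block, or the index $[G : H']$ for the relevant closure $H'$. Making this measure decrease at each recursive call — and in particular handling the delicate case where all sections equal $G$ and yet $H$ is not yet visibly a block — is where the real work lies. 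A secondary technical obstacle is proving the closure of ``virtually a block subgroup'' under the two operations used: passing to finite-index subgroups and overgroups, and assembling blocks at a level from blocks in the subtrees; these are bookkeeping lemmas about diagonal subgroups (Definition~\ref{defn:diag}) and the block definition (Definition~\ref{defn:Block}) that must be in place before the induction can run cleanly.
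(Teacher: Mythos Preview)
Your proposal has a genuine gap, and you have in fact identified it yourself: the induction has no working termination measure. Passing from $H$ to $\St_H(1)$ and then to the sections $H_v=\varphi_v(\St_H(1))$ does not decrease any invariant you have named, and the suggested lexicographic fixes are speculative. The clean way to exploit the subgroup induction property would be to show directly that the collection $\mathcal{C}$ of subgroups that are virtually block is an \emph{inductive collection} in the sense of Definition~\ref{defn:InductiveClass}; then SIP gives the conclusion with no induction at all. But the hard condition is~(III): from the assumption that each $\varphi_x(H)$ is virtually block one must deduce that $H$ itself is, and this requires understanding precisely which diagonal identifications between the coordinates $x\in X$ are forced. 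That assembly step is the whole theorem, and your Step~3 only describes it, it does not carry it out. (Indeed, Remark~\ref{rem:WeakStrong} notes that exactly this approach was sketched in earlier work and that the present paper exists in part because that sketch was incomplete.)

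The paper takes a completely different route that avoids recursion entirely. It uses SIP only once, via Proposition~\ref{prop:ExistNiceTransversal}, to obtain a \emph{full supporting set} $F$: a transversal on which every section of $H$ is either finite or all of $G$. All remaining work is a structural analysis at that fixed level. The key tool is the \emph{dependence function} $\delta_{H,F}$ and the notion of a \emph{minimal dependence set} $V\subseteq F$ for a vertex $v$: a smallest set with $v\in V$ and $\varphi_v(\Srist_H(V))\ne 1$. One proves (Lemmas~\ref{lemma:ProjectionsInMinimalDependenceSets}--\ref{lemma:MinimalDependenceSetForAll}) that such a $V$ is simultaneously minimal for each of its members, so these sets behave like the parts of a partition. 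Tree-primitivity enters not where you place it but in Lemma~\ref{lemma:ExistsVertexWhoseProjectionIsFiniteIndex}: the almost-normal subgroup $\varphi_v(\Srist_H(V))$ is supported on some $W\subseteq X^n$ by Theorem~\ref{thm:StructureAlmostNormalSubgroups}, and tree-primitivity forces $W=wX^{n-|w|}$ for a single vertex $w$, so that $\Srist_H(V)$ is genuinely a diagonal block (Lemma~\ref{lemma:DiagonalSubgroupFromMinimalDependenceSet}). The product of these diagonal blocks is then shown to have finite index in $H$. No induction, no recursion on sections --- just one application of SIP followed by a careful combinatorial argument on a fixed transversal.
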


As a structural result about finitely generated subgroups, Theorem~\ref{thm:A} allows a better understanding of the lattice of subgroups of a self-similar branch group. In particular, it completes the description of all weakly maximal subgroups of the Grigorchuk group and of the torsion GGS groups, see~\cite{Leemann20}.

If $G$ is \emph{regular branch} (see Definition~\ref{defn:regbranch}) and satisfies a weak form of the congruence subgroup property, then we prove a more precise result.
\begin{thm}[Theorem~\ref{thm:RegularBlockSubgroup}]\label{thm:B}
Let $G$ be a finitely generated self-replicating regular branch group with trivial branch kernel satisfying the subgroup induction property acting tree-primitively on a regular rooted tree.
Let $K$ be its maximal branching subgroup (see Definition~\ref{defn:MaxBranchingSubgroup} and Corollary~\ref{cor:MaximalBranchingSubgroup}).
Then, every finitely generated subgroup $H\leq G$ is virtually a regular block subgroup over $K$ (i.e., it admits a finite index subgroup that is a regular block subgroup over $K$).
\end{thm}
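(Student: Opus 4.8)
The plan is to bootstrap from Theorem~\ref{thm:A} and then sharpen its conclusion using the regular branch structure and the triviality of the branch kernel. Since $G$ satisfies all hypotheses of Theorem~\ref{thm:A}, the given finitely generated $H\le G$ has a finite index subgroup $H_0$ that is a block subgroup (Definition~\ref{defn:Block}): there are a level $n$, a partition $\PP=\{P_1,\dots,P_k\}$ of $\level n$, and finite index subgroups $B_1,\dots,B_k\le G$ such that, under the section embedding $\psi_n\colon\St_G(\level n)\hookrightarrow\prod_{v\in\level n}G$, one has $\psi_n(H_0)=\Delta_{P_1}(B_1)\times\cdots\times\Delta_{P_k}(B_k)$, where $\Delta_{P}(\cdot)$ is the diagonal embedding of Definition~\ref{defn:diag}. (If $H_0$ also carries some ``full product'' blocks $B_i^{P_i}$ instead of diagonal ones, those are handled in exactly the same way below.) Thus the theorem reduces to the claim that every block subgroup contains a finite index subgroup which is a regular block subgroup over~$K$.

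The additional ingredient, not present in Theorem~\ref{thm:A}, is the triviality of the branch kernel. Because $G$ is regular branch over $K$ this is a weak congruence subgroup property, and I would first record its consequence that every finite index subgroup of $G$ contains $\rist_G(\level m)$, hence also $K_m:=\psi_m^{-1}(K\times\cdots\times K)$ (the product indexed by $\level m$), for all large enough~$m$; here one uses $K_m\le\rist_G(\level m)$, which follows from $\psi(K)\ge K\times\cdots\times K$. I would then fix $m$ so large that $K_m\le B_i$ for all~$i$, replace $H_0$ by its finite index subgroup $H_0\cap\St_G(\level{n+m})$ (again a block subgroup, now with each $B_i$ replaced by $B_i\cap\St_G(\level m)$, so that we may assume $B_i\le\St_G(\level m)$), and pass to level~$n+m$. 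Writing each vertex of $\level{n+m}$ uniquely as $vw$ with $v\in\level n$, $w\in\level m$, I would take $\mathcal Q$ to be the partition of $\level{n+m}$ in which $vw$ and $v'w'$ lie in a common block exactly when $w=w'$ and $v,v'$ lie in a common block of~$\PP$, and set $R:=\psi_{n+m}^{-1}\bigl(\prod_{J\in\mathcal Q}\Delta_J(K)\bigr)$, a regular block subgroup over~$K$. What remains is to see that $[H_0:R]<\infty$.

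That is a direct computation with sections. For the inclusion $R\le H_0$: an element $r\in R$ has all of its $\level{n+m}$-sections in $K$ and constant along each block of~$\mathcal Q$, so, writing $\psi_{n+m}=\psi_m\circ\psi_n$, the section of $r$ at a vertex $v\in\level n$ lies in $\psi_m^{-1}(K\times\cdots\times K)=K_m\le B_i$ (where $v\in P_i$) and depends only on the block of $\PP$ containing~$v$; hence $\psi_n(r)\in\prod_i\Delta_{P_i}(B_i)=\psi_n(H_0)$, i.e.\ $r\in H_0$. For the index: both $\psi_{n+m}(H_0)$ and $\psi_{n+m}(R)$ are explicit subgroups of $\prod_{v\in\level n}\prod_{w\in\level m}G$, and, identifying $\psi_{n+m}(H_0)$ with $\prod_i B_i$ through the sections at the vertices of $\PP$ (using that $\psi_m$ is injective on $\St_G(\level m)$), the subgroup $\psi_{n+m}(R)$ corresponds to $\prod_i K_m$, so $[H_0:R]=\prod_i[B_i:K_m]<\infty$. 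One also uses that $G$ is regular branch over $K$ to know that $K\times\cdots\times K\subseteq\psi_m(\St_G(\level m))$, so that the displayed formula really produces the intended subgroup $R\le H_0$ with $\psi_{n+m}(R)=\prod_{J\in\mathcal Q}\Delta_J(K)$. Together with the reduction of the first paragraph, this exhibits a finite index subgroup of $H$ that is a regular block subgroup over~$K$.

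I expect the main obstacle to be matching this construction to the precise definitions rather than any genuinely new difficulty: one must verify that the $R$ above literally satisfies the definition of a regular block subgroup over~$K$, that the partition $\mathcal Q$ is of the form allowed there, and that insisting on the \emph{maximal} branching subgroup $K$ (Definition~\ref{defn:MaxBranchingSubgroup}, Corollary~\ref{cor:MaximalBranchingSubgroup}) rather than an arbitrary branching subgroup causes no trouble. One must also confirm that the two reductions used — intersecting with $\St_G(\level{n+m})$, and absorbing ``full product'' blocks — preserve the block structure, and that $K_m$ really has finite index in each~$B_i$ (which holds since $K_m$ has finite index in $G$ and $K_m\le B_i$).
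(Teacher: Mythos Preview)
Your argument rests on a misreading of Definition~\ref{defn:diag}. A diagonal block subgroup $D$ with supporting vertex set $V$ is \emph{not} the image of a literal diagonal embedding $\Delta_V(B)$ of a single group $B$; it is any subgroup $D\leq\Srist_G(V)$ such that each $\varphi_v|_D$ is injective with finite-index image. The ``linking isomorphisms'' $\alpha_{vw}\colon\varphi_v(D)\to\varphi_w(D)$, $\varphi_v(g)\mapsto\varphi_w(g)$, need not be the identity. Consequently your key inclusion $R\leq H_0$ fails in general: an element $r\in R$ satisfies $\varphi_v(r)=\varphi_{v'}(r)$ for $v,v'$ in the same block of $\PP$, whereas membership in the diagonal block $D_i$ demands $\varphi_{v'}(r)=\alpha_{vv'}(\varphi_v(r))$, which is a different condition unless $\alpha_{vv'}$ happens to fix $K_m$ pointwise. (A secondary issue: the supporting set of a block subgroup is an arbitrary finite set of pairwise incomparable vertices, not all of a level $X^n$; see Definition~\ref{defn:Block} and the full supporting sets in Section~\ref{section:BlockStructure}.)

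The missing ingredient is an analysis of how the $\alpha_{vw}$ interact with the tree structure. The paper establishes (Lemma~\ref{lemma:DiagonalsSendRistToRist}, using tree-primitivity and Theorem~\ref{thm:StructureAlmostNormalSubgroups}) that each $\alpha_{vw}$ sends $\rist_{\varphi_v(D)}(u)$ onto $\rist_{\varphi_w(D)}(u')$ for some $u'\in X^{|u|}$. This permits splitting a diagonal block $D_i$ into a finite-index product of smaller diagonal blocks at depth $n$ below, with projections $\varphi_{vz}(D_{i,n})=\varphi_z(\rist_{\varphi_v(D_i)}(z))$ (Lemma~\ref{lemma:FromDiagonalToBlockWithGoodProjection}); this step also needs $\varphi_v(\St_H(F_i))=G$, supplied by Corollary~\ref{cor:BlockSubgroupNiceProjections} rather than the raw Theorem~\ref{thm:BlockSubgroups}. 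Choosing the depth large enough, the trivial branch kernel together with Corollary~\ref{cor:MaximalBranchingSubgroup} force these projections to equal $K$. Note that the resulting regular block subgroup still has twisted, not literal, diagonals: Definition~\ref{defn:RegularBlock} only requires $\varphi_v(B)=K$ for each supporting vertex $v$.
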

 Partial results towards Theorem~\ref{thm:A} were obtained in~\cite{GrigorchukLeemannNagnibeda21}, however the condition of induction was weaker there and the proof was not complete, see Remark~\ref{rem:WeakStrong} below.
Theorem~1 in~\cite{GN-Oberwolfach} also claims that for the specific case of $\Grig$, the block structure can be found algorithmically given the generators of the finitely generated subgroup $H$.
This question also makes sense in our generality, but we will not address it in this paper.

\subsection*{Plan of the article} The next section contains definitions and preliminaries on self-similar groups and on branch groups.
Section~\ref{sec:normal} studies normal and almost normal subgroups  of branch groups, as they turn out to be very important in the study of finitely generated subgroups.
In Section~\ref{section:TreePrimitivity} we define and study tree-primitive actions of groups of automorphisms of rooted trees.
It is in Section~\ref{section:BlockSubgroupsAndSIP} that we finally turn our attention to the subgroup induction property and to the block subgroups.
The last two sections are devoted to the proofs of Theorem~\ref{thm:A} and Theorem~\ref{thm:B}.

\subsection*{Acknowledgements}
The first and the third authors thank Anita Thillaisundaram for a useful discussion and references concerning maximal branch subgroups of GGS groups.
The authors would like to thank the anonymous referee for a careful reading and constructive criticism of a previous version of this paper.
The authors acknowledge support of the Swiss NSF grant 200020-20040.
The first author acknowledges the support of the Leverhulme Trust Research Project Grant RPG-2022-025.
The second author is supported by Travel Support for Mathematicians grant MP-TSM-00002045 from Simons Foundation.
The third author was supported by RDF-23-01-045 \emph{Groups acting on rooted trees and their subgroups} of XJTLU.

\section{Preliminaries on branch groups}

\subsection{The poset of words over a finite alphabet and branch groups}

For the rest of the article, let $X$ be a finite set. For $n\in \N$, we denote by $X^n$ the set of words of length $n$ in the alphabet $X$, i.e.\ the set of maps from $\{1,\dots, n\}$ to $X$. By convention, the set $X^0$ contains only one element, the empty word, which we denote by $\epsilon$. Let $X^*=\bigcup_{n\in \N}X^n$ be the set of all finite words on the alphabet $X$. Then, for any $v\in X^*$, there exists a unique $n\in \N$ such that $v\in X^n$. We call this $n$ the \emph{length} of $v$ and write $|v|=n$.

When endowed with the operation of concatenation of words, $X^*$ becomes the free monoid on the set $X$. This free monoid structure allows one to define a partial order on $X^*$ as follows. Given two words $u,v\in X^*$, we will say that $u\leq v$ if there exists $w\in X^*$ such that $uw=v$. If $u\leq v$, we say that $u$ is an \emph{ancestor} of $v$ and that $v$ is a \emph{descendant} of $u$. In the case that there exists $w\in X$ such that $uw=v$, we say that $u$ is a \emph{parent} of $v$ and that $v$ is a \emph{child} of $u$. Notice that this order has a least element, namely the empty word $\epsilon$.

We will denote the group of order-preserving bijections of $X^*$ by $\Aut(X^*)$. Note that this is the group of automorphisms of $X^*$ seen as a poset and should not be confused with the group of automorphisms of the free monoid on $X$. As the Hasse diagram of the poset $(X^*, \leq)$, when rooted at the empty word, is a $|X|$-regular rooted tree, the group $\Aut(X^*)$ can also be seen as the group of automorphisms of a $|X|$-regular rooted tree, and this interpretation is often helpful for visualisation purposes.

Let us now introduce a few special subgroups of automorphisms of $X^*$.

\begin{defn}\label{defn:StabsAndRists}
Let $G\leq \Aut(X^*)$ be a group of automorphisms of $X^*$.
\begin{enumerate}[label=(\roman*)]
\item For $v\in X^*$, the \emph{stabiliser of $v$ in $G$}, denoted by $\St_G(v)$, is the subgroup $\St_G(v)=\{g\in G \mid g\cdot v=v\}$.
\item For $V\subseteq X^*$, the \emph{pointwise stabiliser of $V$ in $G$}, denoted by $\St_G(V)$, is the subgroup $\St_G(V)=\bigcap_{v\in V}\St_G(v)$.
\item For $v\in X^*$, the \emph{rigid stabiliser of $v$ in $G$}, denoted by $\rist_G(v)$, is the subgroup $\rist_G(v)=\{g\in G \mid g\cdot w=w \quad\forall w\in X^* \text{ such that } v\not\leq w\}$.
\item For $V\subseteq X^*$, the \emph{rigid stabiliser of $V$ in $G$}, denoted by $\rist_G(V)$, is the subgroup $\rist_G(V)=\{g\in G \mid g\cdot w = w \quad\forall w\in X^* \text{ not comparable to any } v\in V\}$. It will sometimes be more convenient to look at the intersection $\Srist_G(V) = \rist_G(V) \cap \St_G(V)$ of the rigid stabiliser of a set with the stabiliser of this same set, which we will call the \emph{stabilised rigid stabiliser}.\label{item:RistAndSRist}
\item For $n\in \N$, the \emph{rigid stabiliser of the level $n$ in $G$}, denoted by $\rist_G(n)$, is the subgroup $\rist_G(n)=\langle \bigcup_{v\in X^n}\rist_G(v)\rangle$. Note that this should not be confused with $\rist_G(X^n)$, which is equal to $G$, or with $\Srist_G(X^n)$, which is equal to $\St_G(X^n)$.
\end{enumerate}
\end{defn}

\begin{rem}\label{rem:RistsCommute}
For $n\in \N$, $v\neq w\in X^n$ and $u\in X^*$, notice that if $v\leq u$, then $u$ and $w$ are incomparable. It follows that $\rist_G(v)$ and $\rist_G(w)$ never act non-trivially on the same element of $X^*$, so that these two subgroups commute. Therefore, we have
\[\rist_G(n) \cong \prod_{v\in X^n}\rist_G(v).\]
More generally, if $U,V\subseteq X^*$ are two subsets such that every $u\in U$ is incomparable with every $v\in V$, then the subgroups $\rist_G(U)$ and $\rist_G(V)$ commute.
\end{rem}

It is clear from the definition of the order on $X^*$ that the action of $\Aut(X^*)$ on $X^*$ must preserve the sets $X^n$ for all $n\in \N$. Thus, in terms of transitivity, the most that one can ask of an order-preserving action of a group $G$ on $X^*$ is that it be transitive on each of these sets. This leads to the following definition.

\begin{defn}
Let $G\leq \Aut(X^*)$ be a group of automorphisms of $X^*$. We say that $G$ acts \emph{spherically transitively} on $X^*$ if $G$ acts transitively on $X^n$ for all $n\in \N$.
\end{defn}

We are now ready to define branch groups in the context of groups of automorphisms of $X^*$.

\begin{defn}\label{defn:branch}
Let $G\leq \Aut(X^*)$ be a group of automorphisms of $X^*$. We say that $G$ is a \emph{branch group} if
\begin{enumerate}[label=(\roman*)]
\item the action of $G$ on $X^*$ is spherically transitive,
\item for all $n\in \N$, the subgroup $\rist_G(n)$ is of finite index in $G$.
\end{enumerate}
\end{defn}

\begin{rem}
While we have defined branch groups here in the context of subgroups of $\Aut(X^*)$, one can more generally define branch groups in the group $\Aut(T)$ of automorphisms of a locally finite rooted tree $T$, although we will not need such generality here. We refer the reader to~\cite{BartholdiGrigorchukSunic03} for a more detailed account of branch groups in general.
\end{rem}

\subsection{Self-similar groups and regular branch groups}

It follows from the definition of the order on $X^*$ that for every $g\in \Aut(X^*)$ and every $v\in X^*$, there exists a unique element $g_v\in \Aut(X^*)$ such that
\[g\cdot (vw) = (g\cdot v)(g_v\cdot w)\]
for every $w\in X^*$.
Thus, we can define a map
\begin{align*}
\varphi\colon \Aut(X^*)\times X^* &\rightarrow \Aut(X^*) \\
(g,v)&\mapsto g_v.
\end{align*}
For every $v\in X^*$, we have a map $\varphi_v\colon \Aut(X^*) \rightarrow \Aut(X^*)$ given by $\varphi_v(g)=\varphi(g,v)$.
The map $\varphi$ is a cocycle, meaning that
\[\varphi_v(gh) = \varphi_{h\cdot v}(g)\varphi_v(h)\]
for all $v\in X^*$, $g,h\in \Aut(X^*)$. Furthermore, the map $v \mapsto \varphi_v$ is a semigroup antihomomorphism between the semigroup $X^*$ and the semigroup of self-maps of $\Aut(X^*)$ with respect to composition. In other words, for every $v,w\in X^*$, we have
\[\varphi_{vw} = \varphi_w\circ\varphi_v.\]

The maps $\varphi_v$ allow us to define the notion of self-similar subgroups of $\Aut(X^*)$.

\begin{defn}\label{defn:selfsim}
Let $G\leq \Aut(X^*)$ be a non-trivial group of automorphisms of $X^*$. We say that $G$ is \emph{self-similar} if $\varphi_v(G)\subseteq G$ for all $v\in X^*$. A self-similar group $G$ is said to be \emph{self-replicating} if $\varphi_v(\St_G(v)) = G$ for all $v\in X^*$.
\end{defn}
Self-replicating groups are necessarily infinite, but this is not the case for self-similar groups.

For self-replicating groups, there exists a stronger notion than being a branch group, namely the notion of a regular branch group, which we define below.

\begin{defn}\label{defn:regbranch}
Let $G\leq \Aut(X^*)$ be a self-similar group acting spherically transitively on $X^*$ and let $K\leq G$ be a subgroup of $G$ of finite index. We say that $G$ is \emph{regular branch over $K$} if we have $K\leq \varphi_x(\rist_K(x))$ for all $x\in X$.
\end{defn}

Note that if $G$ is regular branch over $K$, then $G$ is a branch group.
Let us now give examples of self-replicating and regular branch groups.

\begin{example}[The Grigorchuk group]\label{example:GrigorchukGroup}
Let $X=\{0,1\}$. Let $a, b, c, d\in \Aut(X^*)$ be the automorphisms defined by
\begin{align*}
a\cdot 0w &= 1w & a\cdot 1w &= 0w\\
b\cdot 0w &= 0(a\cdot w) & b\cdot 1w &= 1 (c\cdot w)\\
c\cdot 0w &= 0(a\cdot w) & c\cdot 1w &= 1 (d\cdot w)\\
d\cdot 0w &= 0w & d\cdot 1w &= 1 (b\cdot w)
\end{align*}
for all $w\in X^*$, and let $\Grig=\langle a,b,c,d \rangle$ be the subgroup of $\Aut(X^*)$ generated by $a,b,c,d$. This group is known as the \emph{Grigorchuk group} (or \emph{the first Grigorchuk group}). One can check that $\Grig$ is a self-replicating regular branch group over the subgroup $K=\langle [a,b] \rangle_\Grig$, where $\langle [a,b] \rangle_\Grig$ denotes the normal closure in $\Grig$ of the subgroup generated by $[a,b]=aba^{-1}b^{-1}$. A proof of this fact, as well as a general introduction on $\Grig$, can be found for example in~\cite{delaHarpe00}.
\end{example}

\begin{example}[GGS groups]\label{example:GGS}
Let $d\geq2$ be an integer and let $X=\{0,\dots,d-1\}$.
Let $E=(e_0,\dots,e_{d-2})$ be a vector in $(\Z/d\Z)^{d-1}$.
 Let $a, b\in \Aut(X^*)$ be the automorphisms defined by
\begin{align*}
a\cdot iw &= (i+1)w\qquad 0\leq i\leq d-1 \textnormal{, where $i+1$ is taken modulo }d  \\
b\cdot iw &=
\begin{cases}
i(a^{e_i}\cdot w) &\textnormal{ if }0\leq i\leq d-2\\
(d-1) (b\cdot w)&\textnormal{ if } i=d-1
\end{cases}
\end{align*}
for all $w\in X^*$, and let $G=G_E=\langle a,b \rangle$ be the subgroup of $\Aut(X^*)$ generated by $a$ and $b$. These groups are known as the \emph{GGS-groups}, which stands for Grigorchuk-Gupta-Sidki.
If $d$ is prime, then $G$ is a self-replicating group~\cite[Lemma 4.2]{UriaAlbizuri16}, and if furthermore $E$ is not a constant vector, then $G$ is a regular branch group over the subgroup $\gamma_3(G)=\langle[G',G] \rangle$~\cite[Lemma 3.2]{FAZR14}.
Additionally, still under the assumption that $d$ is prime, $G$ is torsion if and only if $\sum_{i=0}^{d-2}e_i=0 \pmod d$~\cite{Vovkivsky98}.
Some of these results have been generalised to the case where $d$ is the power of an odd prime, but as the statements are more complicated in this case, we refer the interested reader to~\cite{DDFAG23} for a full account.
\end{example}

Notice that if $G$ is a regular branch group over $K$, then $\rist_K(1)$ contains a finite index subgroup isomorphic to $K^{|X|}$, where each isomorphic copy of $K$ in this direct product lives in the rigid stabiliser of the corresponding element of $X$. It turns out that if $G$ is self-replicating and regular branch over some $K$, then it must actually be regular branch over a finite index subgroup $L$ such that $\rist_L(1)$ is isomorphic to $L^{|X|}$, as the next proposition shows.

\begin{prop}\label{prop:RegularBranchOverRist}
Let $G\leq \Aut(X^*)$ be a self-replicating regular branch group over a finite index subgroup $K\leq G$. Then, there exist some $N\in \N$ and a finite index normal subgroup $K\leq L\trianglelefteq G$ such that $\varphi_v(\rist_G(v))=L$ for all $v\in X^n$ with $n\geq N$. In particular, $G$ is regular branch over $L$ and $\varphi_x(\rist_L(x))=L$ for all $x\in X$.
\end{prop}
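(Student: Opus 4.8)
The plan is to iterate the regular branch condition and take an appropriate ``limit'' subgroup. Recall that $G$ regular branch over $K$ means $K \leq \varphi_x(\rist_K(x))$ for all $x \in X$, and since $K$ has finite index, so does $\rist_G(1) = \langle \bigcup_{x\in X}\rist_G(x)\rangle$, and hence (by Remark~\ref{rem:RistsCommute}) each $\rist_G(x)$ has finite index in $G$ as well. First I would observe that $\varphi_x(\rist_G(x)) \leq G$ by self-similarity, so setting $K_1 := \bigcap_{x\in X}\varphi_x(\rist_G(x))$ we obtain a finite index subgroup of $G$ (a finite intersection of finite index subgroups) with $K \leq K_1$ — the inclusion $K \leq \varphi_x(\rist_G(x))$ holding for each $x$ because $K \leq \varphi_x(\rist_K(x)) \leq \varphi_x(\rist_G(x))$. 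More generally, for $v \in X^n$ define $K_n := \bigcap_{v \in X^n}\varphi_v(\rist_G(v))$; using the antihomomorphism property $\varphi_{xv} = \varphi_v \circ \varphi_x$ together with the fact that $\varphi_x$ maps $\rist_G(xv)$ into $\rist_G(v)$, one checks that $K_{n+1} \leq K_n$, so we get a descending chain of finite index subgroups all containing $K$.

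Since $[G:K] < \infty$, this descending chain of subgroups sandwiched between $K$ and $G$ must stabilise: there is some $N$ with $K_n = K_N =: M$ for all $n \geq N$. The key point is then that $M$ is \emph{self-reproducing under the rigid stabiliser construction}, i.e.\ $\varphi_v(\rist_G(v)) \supseteq M$ for all $v$ with $|v| \geq N$ and, because $\varphi_v(\rist_G(v)) = \varphi_v(\rist_M(v))$ once one knows $G$ is regular branch over $M$, I expect one can upgrade the inclusions to equalities. Concretely: from $M \leq \varphi_v(\rist_G(v))$ for all $|v| = N$ one deduces, applying $\varphi_x^{-1}$-type reasoning level by level, that $M \leq \varphi_x(\rist_M(x))$, so $G$ is regular branch over $M$; and conversely $\varphi_v(\rist_M(v)) \leq \varphi_v(\rist_G(v)) = M$ by definition of $M$ for $|v|\geq N$. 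Combining, $\varphi_v(\rist_G(v)) = M$ for all $|v| \geq N$, which is half of what we want. To get $\varphi_v(\rist_G(v)) = L$ with $L$ moreover \emph{normal}, I would replace $M$ by the core $L := \bigcap_{g\in G} gMg^{-1}$, a finite index normal subgroup with $K \leq L \leq M$; one then has to check the conjugation-invariant version of the fixed-point property still holds, using that $\varphi_v$ conjugates compatibly with the action of $G$ via the cocycle identity $\varphi_v(gh) = \varphi_{h\cdot v}(g)\varphi_v(h)$ — applied to the finitely many conjugates — to see $\varphi_v(\rist_L(v)) = L$ for $|v|$ large.

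The main obstacle I anticipate is the bookkeeping needed to pass from ``the chain $K_n$ stabilises'' to the clean statement $\varphi_v(\rist_G(v)) = L$ \emph{exactly} (not just eventually, and with the stabilised value literally attained from some level on), together with verifying normality survives. The subtlety is that stabilisation of the $K_n$ tells us $\bigcap_{|v|=n}\varphi_v(\rist_G(v))$ is constant, but we want the individual terms $\varphi_v(\rist_G(v))$ to all equal $L$, which requires spherical transitivity of $G$ — since $G$ acts transitively on $X^n$ and $\varphi_{g\cdot v}(g)$ conjugates $\varphi_v(\rist_G(v))$ to $\varphi_{g\cdot v}(\rist_G(g\cdot v))$ via the cocycle identity, all these subgroups are conjugate in $G$, hence all contain the common core $L$ and all are contained in $M$; combined with the already-established two-sided inclusions this forces equality with $L$ once we also arrange $M$ itself to be normal. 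The final sentence, $\varphi_x(\rist_L(x)) = L$ for all $x \in X$, is then just the case $n=1$ applied after re-rooting, i.e.\ replacing the tree $X^*$ by the subtree below a vertex of level $N$ and using self-replication to identify the section group with $G$ again; this re-rooting step is routine but should be spelled out.
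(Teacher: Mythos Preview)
Your overall strategy---iterate and stabilise a descending chain of finite-index subgroups containing $K$---is the paper's strategy too, but you are missing the one observation that makes the argument clean, and without it what you wrote has genuine gaps.

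The key point, used by the paper at the very start, is that $\varphi_v(\rist_G(v))$ is already \emph{normal} in $G$: indeed $\rist_G(v)\trianglelefteq \St_G(v)$, and since $G$ is self-replicating $\varphi_v(\St_G(v))=G$, whence $\varphi_v(\rist_G(v))\trianglelefteq G$. Combined with the cocycle identity (which, together with spherical transitivity, shows that $\varphi_v(\rist_G(v))$ and $\varphi_w(\rist_G(w))$ are conjugate in $G$ whenever $|v|=|w|$), this forces $\varphi_v(\rist_G(v))=\varphi_w(\rist_G(w))$ for all $v,w$ on the same level. So your intersection $K_n$ is redundant---it equals each individual term---and the ``main obstacle'' you anticipate (passing from the stabilised intersection to the individual terms, and arranging normality via a core) simply evaporates. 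The paper then follows the chain along a single ray $x^n$, lets it stabilise at $L=\varphi_{x^N}(\rist_G(x^N))$, and reads off $\varphi_x(\rist_L(x))=L$ from the identity $\varphi_{x^{N+1}}(\rist_G(x^{N+1}))=\varphi_x\bigl(\rist_{\varphi_{x^N}(\rist_G(x^N))}(x)\bigr)$.

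Without that observation your argument does not close. You write ``$\varphi_v(\rist_G(v))=M$ by definition of $M$ for $|v|\geq N$'', but by definition $M=K_N$ is only the \emph{intersection} of these subgroups, so you only have $M\leq\varphi_v(\rist_G(v))$; the step is circular. Likewise ``all are contained in $M$'' has the inclusion backwards. The deduction ``$M\leq\varphi_v(\rist_G(v))$ for all $|v|=N$ implies $M\leq\varphi_x(\rist_M(x))$'' is unjustified as stated; it becomes immediate once you know $\varphi_v(\rist_G(v))=M$ exactly (since then $\varphi_{vx}(\rist_G(vx))=\varphi_x(\rist_M(x))$), but that is again the missing equality. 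And your proposed fix ``once we also arrange $M$ itself to be normal'' is precisely the missing ingredient, with no mechanism given---the mechanism \emph{is} the normality observation above. (A minor slip earlier: $\rist_G(x)$ is not of finite index in $G$; it is $\varphi_x(\rist_G(x))$ that is, which is what you actually need.)
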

\begin{proof}
Since $G$ is regular branch over $K$, we have $K\leq \varphi_v(\rist_G(v))$ for all $v\in X^*$, and since $K$ is of finite index in $G$, it must also be of finite index in $\varphi_v(\rist_G(v))$. For every $v\in X^*$, the subgroup $\varphi_v(\rist_G(v))$ must be normal in $\varphi_v(\St_G(v))=G$, where this last equality is due to the fact that $G$ is self-replicating. Using this, the fact that $\varphi$ is a cocycle and the fact that for any $w\in X^{|v|}$, the subgroups $\rist_G(v)$ and $\rist_G(w)$ are conjugate, we conclude that $\varphi_v(\rist_G(v))=\varphi_w(\rist_G(w))$ for all $w\in X^{|v|}$. Therefore, it suffices to prove the result for one vertex on each level.

Let us fix some $x\in X$ and some $n\in \N$, and let us consider $\varphi_{x^n}(\rist_G(x^n))$, where $x^n$ denotes the product of $x$ with itself $n$ times in the free semigroup $X^*$. Using the fact that $v\mapsto \varphi_v$ is a antihomomorphism and that
\[\varphi_v(\rist_G(vw)) = \rist_{\varphi_v(\rist_G(v))}(w)\]
for any $v,w\in X^*$, we find
\[K\leq\varphi_{x^n}(\rist_G(x^n)) = \varphi_{x^{n-1}}(\rist_{\varphi_x(\rist_G(x))}(x^{n-1}))\leq \varphi_{x^{n-1}}(\rist_G(x^{n-1})).\]
If we denote by $I_n$ the index of $K$ in $\varphi_{x^{n}}(\rist_G(x^n))$, it follows from the above that $I_n$ is a non-increasing sequence of positive integers, and it must therefore stabilise after some $N\in \N$. 

Let us write $L=\varphi_{x^N}(\rist_G(x^N))$. Since
\[K\leq \varphi_{x^{N+1}}(\rist_G(x^{N+1}))\leq \varphi_{x^N}(\rist_G(x^N)) = L,\]
the fact that $I_{N+1}=I_N$ implies that $\varphi_{x^{N+1}}(\rist_G(x^{N+1}))=L$. By induction, we conclude that $\varphi_{x^{n}}(\rist_G(x^{n}))=L$ for all $n\geq N$. On the other hand,
\[\varphi_{x^{N+1}}(\rist_G(x^{N+1})) = \varphi_{x}(\rist_{\varphi_{x^N}(\rist_G(x^N))}(x))=\varphi_{x}(\rist_{L}(x)),\]
from which we conclude that $L=\varphi_x(\rist_L(x))$,
 which finishes the proof.
\end{proof}

Following~\cite{GarridoSunic23}, let us define the notion of a \emph{maximal branching subgroup}.
\begin{defn}\label{defn:MaxBranchingSubgroup}
Let $G$ be a regular branch group. We say that $K$ is a \emph{maximal branching subgroup} of $G$ if $G$ is regular branch over $K$ and $K$ is maximal for this property.
\end{defn}
 
It follows from the definition that if $G$ is a regular branch over some $K$, then $K$ is contained in some maximal branching subgroup.
 As a direct corollary of Proposition~\ref{prop:RegularBranchOverRist}, we obtain:
\begin{cor}\label{cor:MaximalBranchingSubgroup}
Let $G$ be a self-replicating regular branch group and let $K$ be a maximal branching subgroup.
Then, there exists some $N\in \N$ such that $\varphi_v(\rist_G(v))=K$ for all $v\in X^n$ with $n\geq N$.
In particular, $K$ is unique.
\end{cor}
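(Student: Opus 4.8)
The plan is to deduce Corollary~\ref{cor:MaximalBranchingSubgroup} almost entirely from Proposition~\ref{prop:RegularBranchOverRist} together with the definition of a maximal branching subgroup. First I would observe that since $G$ is regular branch, it is regular branch over \emph{some} finite index subgroup, and by the remark following Definition~\ref{defn:MaxBranchingSubgroup}, every such subgroup is contained in a maximal branching subgroup; so a maximal branching subgroup $K$ exists. Now apply Proposition~\ref{prop:RegularBranchOverRist} to $K$: it produces $N\in\N$ and a finite index normal subgroup $K\leq L\trianglelefteq G$ with $\varphi_v(\rist_G(v))=L$ for all $v\in X^n$ with $n\geq N$, and such that $G$ is regular branch over $L$. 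Since $K$ is maximal with respect to the property of $G$ being regular branch over it, and $K\leq L$ with $G$ regular branch over $L$, maximality forces $K=L$. This immediately gives $\varphi_v(\rist_G(v))=K$ for all $v\in X^n$ with $n\geq N$.

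For the uniqueness claim, suppose $K'$ is another maximal branching subgroup. Running the same argument with $K'$ in place of $K$ yields some $N'\in\N$ with $\varphi_v(\rist_G(v))=K'$ for all $v\in X^n$, $n\geq N'$. Taking $n\geq\max(N,N')$ and any $v\in X^n$, we get $K=\varphi_v(\rist_G(v))=K'$, so the maximal branching subgroup is unique.

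The only genuine subtlety — and the step I expect to require the most care — is making sure the logic of the maximality argument is airtight: $G$ being regular branch over $K$ and over $L$ simultaneously, with $K\le L$, is precisely the hypothesis under which maximality of $K$ yields $K=L$; one should double-check that Proposition~\ref{prop:RegularBranchOverRist} really does assert that $G$ is regular branch over $L$ (it does, in its last sentence, since $\varphi_x(\rist_L(x))=L\supseteq L$), so nothing further needs to be verified. Everything else is a direct invocation of the proposition, so the proof is short. A clean way to write it is simply: ``By the discussion following Definition~\ref{defn:MaxBranchingSubgroup}, a maximal branching subgroup $K$ exists. Apply Proposition~\ref{prop:RegularBranchOverRist} to $K$ to obtain $N$ and $L\trianglelefteq G$ with $K\leq L$, $G$ regular branch over $L$, and $\varphi_v(\rist_G(v))=L$ for all $v\in X^n$, $n\geq N$. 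By maximality of $K$, $K=L$, which gives the first claim. If $K'$ is another maximal branching subgroup, the same argument gives $N'$ with $\varphi_v(\rist_G(v))=K'$ for $v\in X^n$, $n\geq N'$; comparing at level $\max(N,N')$ yields $K=K'$.''
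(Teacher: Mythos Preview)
Your proposal is correct and matches the paper's approach exactly: the paper states the result as ``a direct corollary of Proposition~\ref{prop:RegularBranchOverRist}'' with no further proof, and what you have written is precisely the natural way to unpack that implication (apply the proposition to $K$, use maximality to force $K=L$, and compare levels for uniqueness). Your extra sentence on existence is not strictly needed since the statement hypothesizes a maximal branching subgroup, but it does no harm.
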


\section{Normal and almost normal subgroups of branch groups}\label{sec:normal}

We will now establish a few useful results regarding normal and almost normal subgroups of branch groups.
Note that these results are valid for all branch groups, and not only those acting on regular rooted trees.
However, in order to keep our notation as simple as possible, we will state them only for regular branch group, which is the context in which we will need them later on.

\subsection{Normal subgroups of branch groups}

Non-trivial normal subgroups of branch groups always contain the commutator subgroup of the rigid stabiliser of some level, as was shown by the second author.

\begin{prop}[\cite{Grigorchuk00}, see Theorem 4]\label{prop:NormalSubgroupsContainRist'}
Let $G\leq \Aut(X^*)$ be a branch group and let $N\trianglelefteq G$ be a non-trivial normal subgroup. Then, there exists $n\in \N$ such that $\rist_G(n)'\leq N$, where $\rist_G(n)'$ denotes the commutator subgroup of $\rist_G(n)$.
\end{prop}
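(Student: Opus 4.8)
The plan is to leverage the defining properties of a branch group—spherical transitivity and the fact that rigid stabilisers of levels have finite index—to propagate a single non-trivial element of $N$ into a commutator subgroup $\rist_G(n)'$ for suitable $n$. First I would pick a non-trivial $g \in N$ and a word $v \in X^*$ on which $g$ acts non-trivially, meaning there is a descendant $vw$ with $g\cdot(vw) \neq v(g_v\cdot w)$ in a way witnessed already at some finite level; concretely, there is a level $m$ and $u, u' \in X^m$ with $u \neq u'$ and $g\cdot u = u'$. The key mechanism is the standard commutator trick: for any $h \in \rist_G(u)$, the commutator $[g,h] = g h g^{-1} h^{-1}$ lies in $N$ (as $N$ is normal), and since $g h g^{-1}$ acts inside the subtree below $u' = g\cdot u$ while $h$ acts inside the subtree below $u$, and $u \neq u'$ are incomparable, these two rigid-stabiliser elements commute with everything outside their respective subtrees; one computes that $[g,h] = (g h g^{-1}) h^{-1} \in \rist_G(\{u, u'\})$ and in fact its restriction to the subtree below $u$ is exactly $h^{-1}$ (up to the identification via $\varphi$). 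Thus, after applying $\varphi_u$, we obtain all of $\varphi_u(\rist_G(u))$—or at least a conjugate of $\rist_G(u)$—inside $\varphi_u(N \cap \rist_G(u))$, so $N$ contains a full rigid stabiliser of one vertex at level $m$ "read off" the subtree.

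Next I would use spherical transitivity together with normality: since $G$ acts transitively on $X^m$ and $N$ is normal, if $N$ contains $\rist_G(u)$ for one $u \in X^m$, then by conjugating we get $\rist_G(u'') \le N$ for every $u'' \in X^m$, hence $\rist_G(m) = \langle \bigcup_{u'' \in X^m}\rist_G(u'')\rangle \le N$, and in particular $\rist_G(m)' \le N$. The one subtlety is that the commutator trick does not literally hand us $\rist_G(u)$ but rather the set $\{[g,h] : h \in \rist_G(u)\}$, whose image under $\varphi_u$ is $\{h^{-1}\} = \rist_G(u)$ only after re-identifying the subtree below $u$ with $X^*$; so I would phrase this carefully in terms of the cocycle $\varphi$ and the identity $\varphi_u(\rist_G(u)) = \rist_{\varphi_u(\rist_G(u))}(\epsilon)$, noting that $\rist_G(u)$ is exactly the subgroup of $\Aut(X^*)$ supported on descendants of $u$, so the map $h \mapsto [g,h]$, followed by the appropriate restriction, is a bijection onto $\rist_G(u)$ (sending $h$ to $h^{-1}$). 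This already gives $\rist_G(m) \le N$; then $\rist_G(m)' \le N$ is immediate, though in fact one could even weaken the hypotheses and only aim for the commutator subgroup directly, which is why the statement is phrased with $\rist_G(n)'$.

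The main obstacle I anticipate is the bookkeeping in the commutator computation: one must check that $g$ acting non-trivially on $X^*$ genuinely forces the existence of $u \neq g\cdot u$ at some finite level (clear, since $g \neq 1$ means $g\cdot v \neq v$ for some $v$, and then $|v|$ works), and more delicately that $g h g^{-1}$, for $h \in \rist_G(u)$, really is supported on descendants of $g\cdot u$ and agrees there with the "copy" of $h$ transported by $g$. This is a routine but care-demanding verification using the cocycle identity $\varphi_v(gh) = \varphi_{h\cdot v}(g)\varphi_v(h)$ and the observation that $\varphi_{g\cdot u}(g h g^{-1}) = \varphi_u(g)\,\varphi_u(h)\,\varphi_u(g)^{-1}$-type relations, combined with the fact that $h$ fixes everything not below $u$. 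Once this is pinned down, the passage from "one vertex" to "the whole level" via transitivity and normality, and then to the commutator subgroup, is short. I would also remark that if $G$ is self-replicating one can take $N$ depending on $g$ with $n = |v|$ for the witnessing $v$, but the statement as given does not need this refinement.
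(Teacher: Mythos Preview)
The paper does not supply its own proof of this proposition; it simply cites \cite{Grigorchuk00}. Nonetheless, your proposal contains a genuine gap that would make the argument fail as written.

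The problem is in the passage from the single commutator $[g,h]$ to the conclusion $\rist_G(u)\le N$. You correctly observe that for $h\in\rist_G(u)$ and $u'=g\cdot u\ne u$, the element $[g,h]=(ghg^{-1})h^{-1}$ lies in $N$, and that $\varphi_u([g,h])=\varphi_u(h)^{-1}$. But $[g,h]$ is \emph{not} equal to $h^{-1}$: it is the product of $ghg^{-1}\in\rist_G(u')$ and $h^{-1}\in\rist_G(u)$, and the $u'$-component $ghg^{-1}$ is non-trivial whenever $h$ is. So what you have shown is only that $N$ contains a ``diagonal'' copy of $\rist_G(u)$ inside $\rist_G(u)\times\rist_G(u')$, not that $N$ contains $\rist_G(u)$ itself. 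Knowing that $\varphi_u$ maps $\{[g,h]:h\in\rist_G(u)\}$ onto $\varphi_u(\rist_G(u))$ says nothing about $\rist_G(u)$ being contained in $N$. Indeed, the conclusion $\rist_G(m)\le N$ is in general \emph{false} for branch groups: this is precisely why the statement is about $\rist_G(n)'$ and not $\rist_G(n)$, and why the paper later singles out the ``trivial branch kernel'' condition (Definition~\ref{defn:TrivialBranchKernel}) as an extra hypothesis.

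The standard fix, and the reason the derived subgroup appears, is to take a \emph{second} commutator. For $h_1,h_2\in\rist_G(u)$, write $[g,h_1]=a\,h_1^{-1}$ with $a=gh_1g^{-1}\in\rist_G(u')$. Since $a$ commutes with everything in $\rist_G(u)$, a direct computation gives
\[
[[g,h_1],h_2]=[a\,h_1^{-1},h_2]=[h_1^{-1},h_2]\in\rist_G(u).
\]
This element lies in $N$ because $[g,h_1]\in N$ and $N$ is normal. Letting $h_1,h_2$ range over $\rist_G(u)$ yields $\rist_G(u)'\le N$; then spherical transitivity and normality give $\rist_G(v)'\le N$ for every $v\in X^m$, and since $\rist_G(m)'=\prod_{v\in X^m}\rist_G(v)'$ you conclude $\rist_G(m)'\le N$. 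This is exactly the mechanism behind the more general Proposition~\ref{prop:R'inSubgroupNormalisedByR}, which the paper states immediately afterwards.
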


Using essentially the same proof, one can obtain the following more general result (see for instance~\cite[Lemma 2.11]{Francoeur22p} for a proof).

\begin{prop}\label{prop:R'inSubgroupNormalisedByR}
Let $G\leq \Aut(X^*)$ be a branch group and let $H\leq G$ be a non-trivial subgroup of $G$. Let $v\in X^*$ be an element that is not fixed by $H$, and let $R\leq \rist_{G}(v)$ be a subgroup normalising $H$. Then, $R'\leq H$.
\end{prop}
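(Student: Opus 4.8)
The plan is to mimic the classical argument showing that non-trivial normal subgroups of branch groups contain the commutator of a level rigid stabiliser (Proposition~\ref{prop:NormalSubgroupsContainRist'}), but to localise everything at the vertex $v$. First I would pick, since $v$ is not fixed by $H$, an element $h\in H$ with $h\cdot v = w \neq v$. Two cases arise, exactly as in the usual branch-group commutator computations: either $w$ is incomparable with $v$, or $w$ is comparable with $v$ (i.e.\ one is a descendant of the other). The incomparable case is the clean one and should be treated first.

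In the incomparable case, take any $r_1, r_2 \in R \leq \rist_G(v)$. Then $h r_1 h^{-1}$ acts trivially outside the subtree below $w$, so in particular it acts trivially below $v$; hence $h r_1 h^{-1}$ commutes with $r_2$ (their supports are contained in the disjoint subtrees below $w$ and below $v$ respectively). From this one computes
\[
[r_1, r_2] = [r_1, r_2]\,[r_2, h r_1 h^{-1}]^{-1} \cdot(\text{something lying in }H),
\]
or more transparently: the commutator $[r_1, h r_1^{-1} h^{-1}]$-type manipulation. Concretely, set $r_1' = h r_1 h^{-1} \in H$ (using $R$ normalises $H$, so $r_1 H r_1^{-1} = H$ is not quite what we want — rather we want $h \in H$ conjugating, and $r_1 \in R$ normalising $H$ gives $r_1 h r_1^{-1} h^{-1} \in H$). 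The correct move: since $R$ normalises $H$ and $h \in H$, for each $r \in R$ we have $[r,h] = r h r^{-1} h^{-1} \in H$. Now $[r,h] = r\,(h r^{-1} h^{-1})$, and $h r^{-1} h^{-1}$ is supported below $w$ while $r$ is supported below $v$; since $v, w$ are incomparable these two factors commute and have disjoint support, so $r$ and $h r^{-1} h^{-1}$ can be read off independently inside the product. Feeding in $r = [r_1, r_2]$ with $r_1, r_2 \in R$, one gets that $[r_1,r_2] \cdot (h [r_1,r_2]^{-1} h^{-1}) \in H$; projecting to the coordinate below $v$ (the subgroup $\rist_G(v)$, on which the $w$-supported factor acts trivially) yields $[r_1, r_2] \in H$, i.e.\ $R' \leq H$.

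For the comparable case — say without loss of generality $v < w$, so $w = vu$ for some non-empty $u$ — the trick is to pass to a descendant. The subgroup $\rist_G(w) = \varphi_v^{-1}(\dots)$ sits inside $\rist_G(v)$, and one can find a further vertex $w'$ below $v$ but incomparable with $w$ (this uses that $|X| \geq 2$, which holds since the action is non-trivial on the subtree — if $|X|=1$ the tree is a ray and there are no branch groups); then conjugating an element of $\rist_G(w')$-type by $h$ lands it below $w$, reducing to the incomparable situation for the relevant commutators. Alternatively, and perhaps more cleanly, iterate $h$: the orbit of $v$ under $\langle h \rangle$ eventually produces two incomparable vertices unless $h$ fixes the ray through $v$ setwise, but a finite-order consideration or a direct support argument handles the degenerate configuration. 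I expect the comparable/nested case to be the main obstacle, precisely because the naive disjoint-support commutator trick fails and one must either descend to a suitable incomparable pair of vertices or argue more carefully about how $h$ moves the subtree below $v$; the referee-cited Lemma 2.11 of~\cite{Francoeur22p} presumably packages exactly this bookkeeping, so I would either reproduce that argument or reduce to it.
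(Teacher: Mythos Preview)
Your proposal has two issues, one fatal and one that causes you to waste effort on a non-problem.

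\textbf{The comparable case is vacuous.} Automorphisms of $X^*$ preserve word length: for any $h\in\Aut(X^*)$ and any $v\in X^*$ one has $|h\cdot v|=|v|$. Hence if $h\cdot v\neq v$ then $h\cdot v$ and $v$ are distinct elements of the same level $X^{|v|}$ and are therefore automatically incomparable. The entire second half of your proposal, with its descent to descendants, iteration of $h$, and appeal to the cited lemma for ``bookkeeping'', is addressing a case that cannot occur.

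\textbf{The projection step is invalid.} In the (only) case $v,w$ incomparable, you correctly obtain $[r,h]=r\cdot(hr^{-1}h^{-1})\in H$ for any $r\in R$, and setting $r=[r_1,r_2]$ gives $[r_1,r_2]\cdot(h[r_1,r_2]^{-1}h^{-1})\in H$. But you cannot conclude $[r_1,r_2]\in H$ by ``projecting to the coordinate below $v$'': the subgroup $H$ need not respect the direct decomposition $\rist_G(v)\times\rist_G(w)$, so the $\rist_G(v)$-component of an element of $H$ has no reason to lie in $H$. The correct move --- which you actually mention and then discard --- is the double commutator. Since $[r_1,h]\in H$ and $r_2\in R$ normalises $H$, also $[[r_1,h],r_2]\in H$. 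Writing $[r_1,h]=r_1 s$ with $s=hr_1^{-1}h^{-1}\in\rist_G(w)$ commuting with $r_2\in\rist_G(v)$, one computes directly
\[
[[r_1,h],r_2]=[r_1 s,r_2]=r_1 s r_2 s^{-1} r_1^{-1} r_2^{-1}=r_1 r_2 r_1^{-1} r_2^{-1}=[r_1,r_2],
\]
so $[r_1,r_2]\in H$. This is exactly the classical Grigorchuk argument the paper alludes to when it says the proof is ``essentially the same'' as that of Proposition~\ref{prop:NormalSubgroupsContainRist'}.
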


The proposition above is only interesting, of course, if one knows that the subgroup $R'$ is not trivial, but this is always the case if $R$ is normal.

\begin{prop}\label{prop:R'Infinite}
Let $G\leq \Aut(X^*)$ be a weakly branch group, let $v\in X^*$ be any element and let $R\trianglelefteq \rist_G(v)$ be a non-trivial normal subgroup of $\rist_G(v)$. Then, $R'$ is infinite.
\end{prop}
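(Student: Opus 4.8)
\emph{Proof strategy.} The plan is to produce infinitely many distinct elements of $R'$ by a commutator construction spread over an infinite antichain of vertices. The key preliminary observation is that in a weakly branch group \emph{every} rigid stabiliser $\rist_G(u)$ is non-abelian. Indeed, if $1\neq t\in\rist_G(u)$, then $t$ moves some vertex $p$ with $u\leq p$, and $p$ and $t\cdot p$ are incomparable (being distinct and of equal length); choosing $1\neq s\in\rist_G(p)\leq\rist_G(u)$ (possible since $G$ is weakly branch), we have $tst^{-1}\in\rist_G(t\cdot p)$, so $tst^{-1}=s$ would force $s\in\rist_G(p)\cap\rist_G(t\cdot p)=\{1\}$, a contradiction; hence $t$ and $s$ do not commute.

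Now fix $1\neq g\in R$. It moves some vertex $w$ with $v\leq w$, and the vertices $w$ and $w':=g\cdot w$ are incomparable and of the same length. For any descendant $u$ of $w$ and any $h_1,h_2\in\rist_G(u)\leq\rist_G(w)\leq\rist_G(v)$, the commutators $[g,h_i]$ lie in $[R,\rist_G(v)]\leq R$ because $R\trianglelefteq\rist_G(v)$. Writing $[g,h_i]=(gh_ig^{-1})h_i^{-1}$ with $gh_ig^{-1}\in\rist_G(g\cdot u)$ and $h_i^{-1}\in\rist_G(u)$, and using that these two rigid stabilisers commute (the vertices $g\cdot u$ and $u$ lie in the disjoint subtrees rooted at $w'$ and at $w$, hence are incomparable), a direct computation gives
\[ \bigl[[g,h_1],[g,h_2]\bigr]=g[h_1,h_2]g^{-1}\cdot[h_1^{-1},h_2^{-1}]\in R'. \]
Its two factors lie in the commuting subgroups $\rist_G(g\cdot u)$ and $\rist_G(u)$, with supports in the disjoint subtrees rooted at $g\cdot u$ and at $u$ respectively; so this element is non-trivial as soon as $[h_1,h_2]\neq 1$, and its support is contained in the union of those two subtrees.

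To conclude, choose an infinite antichain $u_1,u_2,\dots$ of descendants of $w$ (this exists since $|X|\geq 2$, as otherwise $G$ would be trivial). Then the vertices $\{u_i\}_i\cup\{g\cdot u_i\}_i$ are pairwise incomparable: the $u_i$ lie in the subtree at $w$, the $g\cdot u_j$ in the subtree at $w'$, and $g$ preserves incomparability. For each $i$, the non-abelianness of $\rist_G(u_i)$ from the first paragraph yields $h_1^{(i)},h_2^{(i)}\in\rist_G(u_i)$ with $[h_1^{(i)},h_2^{(i)}]\neq 1$, and the recipe above produces a non-trivial $z_i\in R'$ supported in the subtrees rooted at $u_i$ and at $g\cdot u_i$. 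Since these supports are pairwise disjoint, the $z_i$ are pairwise distinct — in fact they generate an infinite direct sum of non-trivial subgroups inside $R'$ — so $R'$ is infinite. The crux of the argument is the computation of the second paragraph: one must route the commutators into $R$ via the normality of $R$ in $\rist_G(v)$, and carry along the bookkeeping of disjoint supports so that the double commutators are guaranteed non-trivial and independent; once this is set up, harvesting infinitely many of them along an antichain is routine.
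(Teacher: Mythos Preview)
Your proof is correct and follows essentially the same strategy as the paper's: produce non-trivial elements of $R'$ as double commutators $[[g,h_1],[g,h_2]]$ with $g\in R$ and $h_i$ in rigid stabilisers of deep vertices, using normality of $R$ in $\rist_G(v)$ to ensure the inner commutators lie in $R$ and disjointness of supports to certify non-triviality. The only organisational difference is that the paper argues by diagonalisation against an arbitrary finite subset $F\subseteq R'$ (choosing a level deep enough that all of $F$ acts non-trivially there, then building one new element fixing that level), whereas you directly manufacture an infinite family $\{z_i\}$ with pairwise disjoint supports along an antichain; your explicit factorisation $[[g,h_1],[g,h_2]]=g[h_1,h_2]g^{-1}\cdot[h_1^{-1},h_2^{-1}]$ (with both $h_i$ at the \emph{same} vertex $u$) is a slightly cleaner variant of the paper's nested-vertex computation.
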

\begin{proof}
It suffices to show that for every finite subset $F\subseteq R'$, there exists some element $g\in R'$ such that $g\notin F$.
Therefore, let $F\subseteq R'$ be a finite subset of~$R'$ and let $r$ be a non-trivial element of $R$.
There must exist some $n\in \N$ such that $r$ as well as every non-trivial element of $F$ act non-trivially on $vX^n$.

As $r$ acts non-trivially on $vX^n$, there exist $w, w'\in X^n$ with $w'\ne w$ such that $r\cdot vw= vw'$.
Let $s\in \rist_G(vw)$ be any non-trivial element, which exists because $G$ is weakly branch.
Then, there must exist $u, u'\in X^*$ such that $s\cdot vwu= vwu'$ with $u'\ne u$.
We choose a non-trivial element $t\in \rist_G(vwu)$.
Since $R$ is normal in $\rist_G(v)$, we have $[r,s^{-1}]\in R$ and $[r,t^{-1}]\in R$, so that $[[r,s^{-1}],[r,t^{-1}]]\in R'$.

If we can prove that $[[r,s^{-1}],[r,t^{-1}]]$ is non-trivial, then we are done, since $[[r,s^{-1}],[r,t^{-1}]]$ fixes every element of $vX^n$ by construction, whereas all non-trivial elements of $F$ must act non-trivially on $vX^n$.
Thus, it only remains to prove that $[[r,s^{-1}],[r,t^{-1}]]\ne 1$.
For this, let us first notice that
\[\varphi_{vw}([r,s^{-1}]) = \varphi_{vw}(rs^{-1}r^{-1})\varphi_{vw}(s) = \varphi_{vw}(s),\]
since $rs^{-1}r^{-1}\in \rist_G(vw')$ with $w'\ne w$.
Similarly, $\varphi_{vw}([r,t^{-1}]) = \varphi_{vw}(t)$.
Thus, $\varphi_{vw}([[r,s^{-1}],[r,t^{-1}]]) = \varphi_{vw}([s,t])$, so that $\varphi_{vwu}([[r,s^{-1}],[r,t^{-1}]]) = \varphi_{vwu}([s,t])$.
Using similar computations to the ones above, we find that $\varphi_{vwu}([s,t]) = \varphi_{vwu}(t^{-1})$, since $sts^{-1}\in \rist_G(vwu')$ with $u'\ne u$.
We must have $\varphi_{vwu}(t^{-1})\ne 1$, since $t$ is a non-trivial element of $\rist_G(vwu)$ and $\varphi_{vwu}$ is injective when restricted to $\rist_G(vwu)$.
This finishes showing that $[[r,s^{-1}],[r,t^{-1}]]$ is non-trivial and thus concludes the proof.
\end{proof}

As Proposition~\ref{prop:NormalSubgroupsContainRist'} shows, every non-trivial normal subgroup of a branch group contains the derived subgroup of a rigid stabiliser. One might wonder when finite index normal subgroups of a branch group must actually contain the full rigid stabiliser. The groups for which this holds are said to have \emph{trivial branch kernel}.

\begin{defn}\label{defn:TrivialBranchKernel}
A branch group $G\leq \Aut(X^*)$ is said to have \emph{trivial branch kernel} if for every normal subgroup $N\trianglelefteq G$ of finite index, there exists $n\in \N$ such that $\rist_G(n)\leq N$.
\end{defn}

The terminology \emph{trivial branch kernel} comes from the fact that, given a branch group $G\leq \Aut(X^*)$, one can define completions of $G$ with respect to various different directed families of normal subgroups. In particular, one can form the completion $\hat{G}$ with respect to the family of all normal subgroups of finite index, which is called the \emph{profinite completion}, and the completion $\tilde{G}$ of $G$ with respect to the family of rigid stabilisers of levels, which is called the \emph{branch completion}. As rigid stabilisers of levels are normal subgroups of finite index of $G$, we have a natural surjective map $\hat{G}\rightarrow \tilde{G}$, and the kernel of this map is known as the \emph{branch kernel}. Having a trivial branch kernel in the sense of the definition above is equivalent to the branch kernel being trivial.

\begin{rem}\label{rem:CSP}
Although we will not make use of it in the present article, we should also mention that there is another natural completion that one can form with a branch group, namely the \emph{congruence completion} $\overline{G}$, which is formed with respect to the family of pointwise stabilisers of levels of the tree. We have natural surjective homomorphisms $\hat{G}\rightarrow \overline{G}$ and $\tilde{G}\rightarrow \overline{G}$ whose kernels are called respectively the \emph{congruence kernel} and the \emph{rigid kernel}.
Having a trivial congruence kernel is obviously a stronger condition than having a trivial branch kernel, and groups satisfying this are said to possess the \emph{congruence subgroup property}, or CSP for short.
We refer the interested reader to~\cite{BartholdiSiegenthalerZalesskii12} for more details regarding these completions of branch groups.
\end{rem}

\subsection{Almost normal subgroups of branch groups}

In what follows, we will need to have a good understanding of the structure of almost normal subgroups of branch groups. Let us first recall the definition of an almost normal subgroup.

\begin{defn}
Let $G$ be a group and let $H\leq G$ be a subgroup of $G$. We say that $H$ is almost normal in $G$ if the normaliser of $H$ is of finite index in $G$.
\end{defn}

Normal subgroups and subgroups of finite index are obviously almost normal, but in general these are not the only ones.
Let us look at an important example of almost normal subgroups in branch groups.

\begin{example}\label{example:RistAreAlmostNormal}
Let $G\leq \Aut(X^*)$ be a branch group and let $V\subset X^*$ be a finite subset.
Let $n=\max\{|v|\mid v\in V\}$ be the maximum of the length of the vertices in $V$.
Then, the subgroup $\rist_G(V)$ is almost normal in $G$, since its normaliser contains $\rist_G(n)$, which is of finite index in $G$.
\end{example}

As the previous example shows, unlike normal subgroups, almost normal subgroups of branch groups need not always contain the commutator subgroup of the rigid stabiliser of some level. However, it turns out that they always contain the commutator subgroup of one of the subgroups of Example \ref{example:RistAreAlmostNormal}. Before we can make this more precise, we first need to understand the orbits in $X^*$ of finite index subgroups of branch groups.

\begin{prop}\label{prop:FiniteIndexSubgroupsSphericallyTransitiveAction}
Let $G\leq \Aut(X^*)$ be a branch group, and let $H\leq G$ be a finite index subgroup. Then, there exists some $N\in \N$ such that for all $v\in X^N$ and for all $n\in \N$, $\St_H(v)$ acts transitively on $vX^n$.
\end{prop}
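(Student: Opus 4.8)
The plan is to exploit the fact that, in a branch group, rigid stabilisers of levels have finite index, together with the observation that $H$ being of finite index forces its orbits on each level to be ``close'' to the full level. First I would reduce to understanding how the orbit structure of $H$ on $X^n$ behaves as $n$ grows. Since $G$ acts spherically transitively, it acts transitively on each $X^n$; as $H$ has finite index in $G$, the number of $H$-orbits on $X^n$ is bounded above by $[G:H]$ for every $n$. Crucially, the partition of $X^{n+1}$ into $H$-orbits refines (via the parent map $X^{n+1}\to X^n$) the partition of $X^n$ into $H$-orbits, because if two words at level $n+1$ lie in the same $H$-orbit then so do their parents. Hence the number of $H$-orbits on $X^n$ is a non-decreasing sequence of positive integers bounded by $[G:H]$, so it stabilises: there is some $N_0$ such that for all $n\ge N_0$ the parent map induces a bijection between $H$-orbits on $X^{n+1}$ and $H$-orbits on $X^n$. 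Equivalently, for $n \ge N_0$ and any $v \in X^n$, the stabiliser $\St_H(v)$ acts transitively on the children $vX$ of $v$.

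The second step is to promote ``transitive on children'' to ``transitive on $vX^m$ for all $m$''. Fix $v\in X^{N_0}$ and let $K=\St_H(v)$. I want to show $K$ acts transitively on $vX^m$ for all $m$. By the stabilisation above applied at every level $n\ge N_0$ beneath $v$, and using that $\St_K(w)=\St_H(w)$ for any descendant $w$ of $v$, one sees that for every descendant $w$ of $v$ the group $\St_K(w)$ acts transitively on $wX$. A standard induction on $m$ then yields transitivity on $vX^m$: if $K$ is transitive on $vX^{m}$ and, for some (equivalently every, by transitivity) $w\in vX^m$, $\St_K(w)$ is transitive on $wX$, then $K$ is transitive on $vX^{m+1}$. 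This is the usual ``level-transitivity from transitivity on each level of stabilisers'' argument for actions on rooted trees.

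The last step is to make the statement uniform over a whole level $X^N$, as the proposition demands, rather than just for descendants of a single $v\in X^{N_0}$. Here I would first pick $N_1 \ge N_0$ large enough that, in addition, $\St_H(v)$ has already ``settled'' for all $v$ at level $N_1$ in the sense above. Then for any $v \in X^{N_1}$ and any $n$, the word $v$ is a descendant of its own ancestor at level $N_0$, and the argument of the previous paragraph, run from level $N_1$ downward (i.e. applied to the subtree hanging below $v$), shows $\St_H(v)$ is transitive on $vX^n$. Taking $N = N_1$ finishes the proof.

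The main obstacle is the first step — establishing that the number of $H$-orbits stabilises and correctly translating ``the orbit partition at level $n+1$ maps bijectively onto that at level $n$'' into the pointwise statement ``$\St_H(v)$ is transitive on $vX$ for each $v$ at high enough level''. One must be slightly careful: stabilisation of the \emph{count} of orbits gives that the parent map on orbits is a bijection, and one needs that this forces, for \emph{each} individual $v$ at that level, the fibre $vX$ to be a single $\St_H(v)$-orbit (not merely that the orbits don't split in number). This follows because $H$ acts transitively on each $H$-orbit $\mathcal{O}\subseteq X^n$, so all $v\in\mathcal O$ have conjugate stabilisers and hence the same number of $\St_H(v)$-orbits on their children; if the orbit count does not increase from level $n$ to $n+1$, this common number must be $1$. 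Once this is pinned down, the rest is the routine rooted-tree induction.
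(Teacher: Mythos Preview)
Your argument is correct, and indeed it is essentially the standard one; the paper itself does not give a proof but simply refers the reader to \cite[Lemma~2.5]{Leemann20} or \cite[Lemma~2.9]{Francoeur22p}, where one finds arguments along the same lines as yours.

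Two minor remarks. First, your Step~3 is unnecessary: once Step~1 gives you $N_0$ such that $\St_H(v)$ acts transitively on $vX$ for every $v$ with $|v|\ge N_0$, Step~2 already shows that for \emph{every} $v\in X^{N_0}$ the stabiliser $\St_H(v)$ is transitive on $vX^m$ for all $m$, so $N=N_0$ suffices. There is nothing further to ``settle''. Second, note that your proof uses only that $G$ acts spherically transitively and that $H$ has finite index; the branch hypothesis is never invoked. This is not a defect --- the result genuinely holds in that generality --- but it is worth being aware of.
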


We refer the reader to~\cite[Lemma 2.5]{Leemann20} or~\cite[Lemma 2.9]{Francoeur22p} for a proof.

Using this, one can obtain the following structural result for almost normal subgroups of branch groups, which is essentially Theorem 1.2 of~\cite{GarridoWilson14}.

\begin{thm}[cf.~\cite{GarridoWilson14}, Theorem 1.2]\label{thm:StructureAlmostNormalSubgroups}
Let $G\leq \Aut(X^*)$ be a branch group, and let $H\leq G$ be a non-trivial almost normal subgroup of $G$. Then, there exist $n\in \N$ and a non-empty $V\subseteq X^n$ such that
\[\rist_G(n)'\cap \rist_G(V)\leq H \leq \rist_G(V).\]
\end{thm}
\begin{proof}
By \cite{GarridoWilson14}, Theorem 1.2, there exist $n\in \N$ and $V\subseteq X^n$ such that $\prod_{v\in V}\rist_G(v)'\leq H$ and
\[H\cap \Biggl(\prod_{v\in X^n\setminus V}\rist_G(v)\Biggr) = \Biggl[H, \Biggl(\prod_{v\in X^n\setminus V}\rist_G(v)\Biggr)\Biggr] = 1.\]
Note that what is written here is the translation of the result of \cite{GarridoWilson14} in our notation, which is different.
In particular, the reader should be aware of the fact that the subgroup we denote by $\rist_G(V)$ is not the same as the one denoted by $\text{rist}_G(V)$ in \cite{GarridoWilson14}.

Since $\prod_{v\in V}\rist_G(v)' = \rist_G(n)'\cap \rist_G(V)$, we directly have $\rist_G(n)'\cap \rist_G(V)\leq H$.
Now, let $w\in X^*$ be a vertex that is not fixed by $H$.
Without loss of generality, we may assume that $|w|\geq n$, so that there exists a unique $w'\in X^n$ with $w'\leq w$.
For any $h\in H$, we have $h\rist_G(w)h^{-1} = \rist_G(hw)$, and since $w$ is not fixed by $H$, it is clear that $[H, \rist_G(w)]\ne 1$.
It follows, using the fact that $\rist_G(w)\leq \rist_G(w')$, that $w'\notin X^n \setminus V$.
Therefore, $w$ must be comparable to an element of $V$.
Consequently, by definition, $H\leq \rist_G(V)$.
\end{proof}

A corollary of the previous theorem is that the center of an almost normal subgroup of a branch group is always trivial.

\begin{cor}\label{cor:CenterAlmostNormalIsTrivial}
Let $G\leq \Aut(X^*)$ be a branch group and let $H$ be a non-trivial almost normal subgroup of $G$. Then, $H$ is infinite and has trivial center.
\end{cor}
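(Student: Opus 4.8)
The plan is to derive both assertions directly from Theorem~\ref{thm:StructureAlmostNormalSubgroups} together with Proposition~\ref{prop:R'Infinite}. Let $H$ be a non-trivial almost normal subgroup of the branch group $G$. First I would apply Theorem~\ref{thm:StructureAlmostNormalSubgroups} to obtain integers $m\geq n$ and a non-empty $V\subseteq X^n$ with $H\leq \rist_G(V)$ and $\rist_G(m)'\cap\rist_G(V)\leq H$. Since $V$ is non-empty, pick $v\in V$; then $\rist_G(m)\cap\rist_G(v)$ contains $\rist_G(w)$ for any $w\in vX^{m-n}$, and its derived subgroup sits inside $\rist_G(m)'\cap\rist_G(V)$, hence inside $H$. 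By Proposition~\ref{prop:R'Infinite} applied to the normal (in fact equal) subgroup $R=\rist_G(w)\trianglelefteq\rist_G(w)$ — or more carefully to $\rist_G(m)\cap\rist_G(v)$ as a normal subgroup of $\rist_G(v)$ — the derived subgroup $R'$ is infinite. Therefore $H$ contains an infinite subgroup and is itself infinite.

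For the triviality of the center, I would argue as follows. Suppose $z\in Z(H)$. Since $H$ is non-trivial and residually finite (being a subgroup of a branch group, which is residually finite), and more to the point since $H\not\leq\St_G(u)$ for some $u$ — indeed $H$ acts non-trivially somewhere below every $v\in V$ by the construction of $V$ in the proof of Theorem~\ref{thm:StructureAlmostNormalSubgroups} — there is a word $u\in X^*$ not fixed by $H$. Now, the key point is that $H$ is normalised by $\rist_G(w)'$ for suitable long enough $w$; more directly, from $\rist_G(m)'\cap\rist_G(V)\leq H$ we know $H$ contains, for each $v\in V$, the infinite group $\rist_G(w)'$ for every sufficiently long $w\succeq v$. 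If $z$ centralises $H$, then in particular $z$ centralises all these $\rist_G(w)'$. But one can choose two incomparable long descendants $w_1, w_2$ of elements of $V$; then $\rist_G(w_1)'$ and $\rist_G(w_2)'$ both lie in $H$ and both are infinite, and an element commuting with an infinite rigid-stabiliser-derived-subgroup supported below $w_1$ must act "the same way'' below $w_1$ regardless — leading one to conclude $z$ acts trivially below $w_1$, and symmetrically below $w_2$, and below every element of the $G$-orbit of such $w_i$. Pushing this through all levels forces $z$ to fix everything, i.e.\ $z=1$.

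More precisely, here is the cleaner route I would actually write. Let $z\in Z(H)$ be non-trivial, so $z$ moves some $u\in X^*$; enlarging, we may assume $u\in X^k$ with $k\geq m$ and $z\cdot u = u'\neq u$. Both $u,u'$ lie below elements of $V$ (since $H\leq\rist_G(V)$ and $z\in H$ moves $u$, so $u$ is comparable to some $v\in V$; likewise $u'$). Take a non-trivial $r\in\rist_G(u)'$; since $u$ has length $\geq m$ and lies below $V$, we have $r\in\rist_G(m)'\cap\rist_G(V)\leq H$, so $z$ commutes with $r$. But $zrz^{-1}\in\rist_G(z\cdot u)'=\rist_G(u')'$, which acts trivially on $uX^*$, whereas $r$ itself can be chosen to act non-trivially on $uX^*$ (by Proposition~\ref{prop:R'Infinite}, $\rist_G(u)'$ is infinite, hence contains an element moving some $uw$). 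Then $z r z^{-1} = r$ is impossible: the left side fixes $uw$ and the right side does not. This contradiction shows $Z(H)=1$.

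I expect the main obstacle to be the bookkeeping in the second part: making sure that the element $r$ witnessing non-triviality can be taken in $\rist_G(u)'\cap\rist_G(m)'$ and simultaneously acting on the part of the tree that $z$ moves, so that the incomparability $\rist_G(u)'\perp\rist_G(u')'$ (when $u\neq u'$ have the same length, one has $\rist_G(u)$ and $\rist_G(u')$ acting on disjoint parts of the tree) genuinely forces a contradiction. The infiniteness statement is immediate once one invokes Proposition~\ref{prop:R'Infinite}; the subtlety is only that Proposition~\ref{prop:R'Infinite} is stated for \emph{weakly} branch groups, which branch groups certainly are, so there is no gap there.
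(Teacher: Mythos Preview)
Your proposal is correct and follows essentially the same approach as the paper: both the infiniteness (via Theorem~\ref{thm:StructureAlmostNormalSubgroups} and Proposition~\ref{prop:R'Infinite}) and the trivial-center argument (pick a vertex $u$ of length $\geq m$ moved by the putative central element, observe $\rist_G(u)'\leq H$, and note that conjugation sends $\rist_G(u)'$ to $\rist_G(u')'$ which intersects $\rist_G(u)'$ trivially) match the paper's proof almost line for line. Your ``cleaner route'' is exactly the paper's argument, phrased contrapositively; the only cosmetic difference is that the paper compares the subgroups $h\rist_G(w)'h^{-1}$ and $\rist_G(w)'$ directly rather than tracking a single element $r$, but this is the same idea.
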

\begin{proof}
We first observe that $H$ is infinite.
Indeed, by Theorem~\ref{thm:StructureAlmostNormalSubgroups}, $H$ contains $\rist_G(w)'$ for some $w$ of level $n$, and by Proposition~\ref{prop:R'Infinite}, $\rist_G(w)'$ is infinite.

We now prove that $H$ has trivial center.
Let $n\in \N$ and $V\subseteq X^n$ be as in Theorem~\ref{thm:StructureAlmostNormalSubgroups}, and let $h\in H$ be a non-trivial element of $H$. Then, there exist some $w\in X^*$ such that $h\cdot w\ne w$, and since $H\leq \rist_G(V)$, this $w$ has to be comparable with an element of $V$. Since $h\cdot w' \ne w'$ for  all $w'\geq w$, we may assume without loss of generality that $|w|\geq n$.

By assumption, $\rist_G(n)'\cap\rist_G(V)\leq H$, which implies that $\rist_G(w)'\leq H$ thanks to our assumptions on $w$. Now, since $h\cdot w\ne w$, we have
\[h\rist_G(w)'h^{-1} = \rist_G(h\cdot w)'\ne \rist_G(w)',\]
which shows that $h$ does not belong to the center of $H$.
\end{proof}

As another consequence of Theorem \ref{thm:StructureAlmostNormalSubgroups}, we obtain that every almost normal subgroup of a finitely generated branch group is finitely generated.

\begin{thm}\label{thm:AlmostNormalFinitelyGenerated}
Let $G\leq \Aut(X^*)$ be a finitely generated branch group.
Then, all almost normal subgroups of $G$ are finitely generated.
\end{thm}
\begin{proof}
Let $H\leq G$ be an almost normal subgroup of $G$.
If $H$ is trivial, then it is obviously finitely generated.
If $H$ is non-trivial, by Theorem \ref{thm:StructureAlmostNormalSubgroups}, there exist $n\in \N$ and a non-empty $V\subseteq X^n$ such that
\[\rist_G(n)'\cap \rist_G(V) \leq H \leq \rist_G(V).\]
It follows that $\rist_G(n)'\cap H = \rist_G(n)'\cap \rist_G(V)$, and since $\rist_G(n)'$ is normal in $G$, the subgroup $\rist_G(n)'\cap H$ is normal in $H$.

Let us consider the quotient $H/(\rist_G(n)'\cap H)$.
We have $H/(\rist_G(n)'\cap H) \cong H\rist_G(n')/ \rist_G(n)'\leq G/\rist_G(n)'$.
Notice that $G/\rist_G(n)'$ is a finitely generated virtually abelian group, since $G$ is a finitely generated branch group.
It is a standard fact and an easy exercise that subgroups of finitely generated virtually abelian groups are themselves finitely generated (that is, finitely generated virtually abelian groups are Noetherian).
We therefore conclude that $H/(\rist_G(n)'\cap H)$ is finitely generated.
Therefore, to show that $H$ is finitely generated, it suffices to show that $\rist_G(n)'\cap H$ is.

By \cite[Corollary A.5]{CapraceLeBoudec23}, since $G$ is a finitely generated branch group, $\rist_G(n)'$ is finitely generated, and since $\rist_G(n)' = \prod_{v\in X^n}\rist_G(v)'$, it follows that $\rist_G(v)'$ is finitely generated for all $v\in X^n$.
Consequently
\[\rist_G(n)'\cap H = \rist_G(n)'\cap \rist_G(V) = \prod_{v\in V} \rist_G(v)'\]
is finitely generated, which concludes the proof.
\end{proof}

We now define the notion of a complement of an almost normal subgroup, which will be particularly useful in the context of branch groups, as we will see below.

\begin{defn}
Let $G$ be a group and let $H\leq G$ be an almost normal subgroup of $G$. An almost normal subgroup $K\leq G$ is called a \emph{complement} of $H$ if $[H,K]=1$ and $H\cap K=1$ (or in other words, if $HK$ is the direct product of $H$ and $K$) and if $HK$ is of finite index in $G$.
\end{defn}

In just infinite branch groups, almost normal subgroups always admit a complement.

\begin{prop}
Let $G\leq \Aut(X^*)$ be a just infinite branch group, and let $H\leq G$ be an almost normal subgroup of $G$. Then, there exists an almost normal subgroup $K\leq G$ such that $K$ is a complement for $H$.
\end{prop}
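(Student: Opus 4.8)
The plan is to use the structure theorem for almost normal subgroups (Theorem~\ref{thm:StructureAlmostNormalSubgroups}) to locate $H$ inside a rigid stabiliser $\rist_G(V)$ of a subset $V\subseteq X^n$, and then to build the complement $K$ out of the rigid stabilisers of the ``complementary'' vertices together with a suitable piece of $\rist_G(m)'$ sitting over $V$. Concretely, I would first apply Theorem~\ref{thm:StructureAlmostNormalSubgroups} to get $m\geq n$ and a non-empty $V\subseteq X^n$ with $H\leq\rist_G(V)$ and $\rist_G(m)'\cap\rist_G(V)\leq H$. Write $W=X^n\setminus V$. The natural candidate for the complement is
\[
K=\bigl(\rist_G(m)'\cap C_{\rist_G(V)}(H)\bigr)\cdot\rist_G(W),
\]
i.e.\ the part of $\rist_G(m)'$ that lives over the vertices of $V$ and centralises $H$, together with everything that lives over $W$ (which automatically commutes with $H$ since $\rist_G(W)$ and $\rist_G(V)$ commute by Remark~\ref{rem:RistsCommute}).

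The verification proceeds in several steps. First, $[H,K]=1$: the $\rist_G(W)$ factor commutes with $\rist_G(V)\supseteq H$ by incomparability, and the other factor centralises $H$ by construction. Second, $H\cap K=1$: an element of $K$ lying in $H\leq\rist_G(V)$ must have trivial $\rist_G(W)$-component, hence lies in $\rist_G(m)'\cap C_{\rist_G(V)}(H)\cap H=Z(H)\cap(\text{something})$, and by Corollary~\ref{cor:CenterAlmostNormalIsTrivial} the center of $H$ is trivial, so the intersection is trivial. Third, $K$ is almost normal: since $G$ is just infinite and $H$ is non-trivial almost normal, $H$ is infinite (Corollary~\ref{cor:CenterAlmostNormalIsTrivial}) and its normaliser $N$ is finite index, hence $N$ is itself infinite and normal-by-finite, so by just infiniteness $H$ (and anything containing a power of $\rist_G(m)'$) is finite index — more precisely I will argue that $K$ contains $\rist_G(W)$ and a finite-index subgroup of $\rist_G(m)'\cap\rist_G(V)$, so $HK$ contains $\rist_G(W)\cdot(\rist_G(m)'\cap\rist_G(V))$; the latter is a non-trivial subgroup normalised by a finite-index subgroup of $G$, hence by just infiniteness has finite index, giving $[G:HK]<\infty$ and in particular $K$ almost normal. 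The normaliser of $K$ is finite index because $K$ is defined from the canonically-normalised data $V$, $m$ and $H$, each stabilised by a finite-index subgroup of $G$.

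The main obstacle I anticipate is pinning down that $\rist_G(m)'\cap C_{\rist_G(V)}(H)$ is large enough — i.e.\ finite index in $\rist_G(m)'\cap\rist_G(V)$ — so that $HK$ really is finite index. Here is where just infiniteness is essential: the commutator map and the containment $\rist_G(m)'\cap\rist_G(V)\leq H$ together with $H\leq\rist_G(V)$ force $H$ and $\rist_G(m)'\cap\rist_G(V)$ to interact, and one should show that the centraliser in $\rist_G(m)'\cap\rist_G(V)$ of $H$ is non-trivial and normalised by a finite-index subgroup, hence finite index. A clean way: $H$ acts on the finite set $V$, and over each $v\in V$ not fixed by $H$ (respectively fixed with $\varphi_v(H)\neq 1$) one shows $H$ already contains $\rist_G(v')'$ for deep enough $v'$; then one checks directly that $\rist_G(W')$ for the complementary deep vertices $W'$ supplies a finite-index complement inside $\rist_G(m)'\cap\rist_G(V)$, mirroring the proof of Theorem~\ref{thm:StructureAlmostNormalSubgroups}. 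Once the finite-index claim is secured, assembling $K$ and checking the three defining properties of a complement is routine. I would keep the bookkeeping of which vertices are ``used up'' by $H$ and which are free parallel to the argument already given (in the commented-out proof) for Theorem~\ref{thm:StructureAlmostNormalSubgroups}.
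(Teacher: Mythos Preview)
Your construction is overcomplicated, and the piece you worry most about is in fact empty. Since Theorem~\ref{thm:StructureAlmostNormalSubgroups} gives $\rist_G(m)'\cap\rist_G(V)\leq H$, any element of your first factor $\rist_G(m)'\cap C_{\rist_G(V)}(H)$ lies in $\rist_G(m)'\cap\rist_G(V)\leq H$ and centralises $H$, hence lies in $Z(H)=1$ by Corollary~\ref{cor:CenterAlmostNormalIsTrivial}. So your ``main obstacle'' --- showing this factor has finite index in $\rist_G(m)'\cap\rist_G(V)$ --- is not just hard but false: the factor is trivial while $\rist_G(m)'\cap\rist_G(V)$ is infinite. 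Your $K$ therefore collapses to $\rist_G(W)$.

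The conceptual point you are missing is that it is $H$, not $K$, that supplies the piece over $V$: since $\rist_G(m)'\cap\rist_G(V)\leq H$ and $\rist_G(m)'\cap\rist_G(W)\leq\rist_G(W)$, one gets immediately
\[
HK\ \supseteq\ \bigl(\rist_G(m)'\cap\rist_G(V)\bigr)\bigl(\rist_G(m)'\cap\rist_G(W)\bigr)\ =\ \rist_G(m)',
\]
which is normal and non-trivial, hence finite index by just infiniteness. This is exactly the paper's argument: it simply takes $K=\rist_G(m)'\cap\rist_G(W)$ (your $K=\rist_G(W)$ would work equally well), checks $[H,K]=1$ and $H\cap K=1$ from the disjointness of $V$ and $W$, and uses the displayed inclusion to get finite index. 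No centraliser computation, no analysis of which vertices $H$ moves, no appeal to the commented-out proof of Theorem~\ref{thm:StructureAlmostNormalSubgroups} is needed.
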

\begin{proof}
If $H=1$, we can take $K=G$.
Otherwise, by Theorem~\ref{thm:StructureAlmostNormalSubgroups}, there exist $n\in \N$ and $V\subseteq X^n$ such that $\rist_G(n)'\cap \rist_G(V)\leq H\leq \rist_G(V)$.
Let $W=X^n\setminus V$ and let $K=\rist_G(n)'\cap \rist_G(W)$. Clearly, $K$ is an almost normal subgroup of $G$.

Since $V, W\subseteq X^n$ with $V\cap W=\emptyset$, it is easy to see from the definition that we must have $\rist_G(V)\cap \rist_G(W)=1$ and $[\rist_G(V),\rist_G(W)]=1$. Thus, \emph{a fortiori}, $H\cap K=[H,K]=1$. Furthermore, since $V\cup W=X^n$, and since $\rist_G(n)'\cap \rist_G(V) = \prod_{v\in V}\rist_G(v)'$, $\rist_G(n)'\cap \rist_G(W) = \prod_{v\in W}\rist_G(v)'$, we conclude that
\[\rist_G(n)' = \left(\rist_G(n)'\cap \rist_G(V)\right)\left(\rist_G(n)'\cap \rist_G(W)\right)\leq HK.\]
Using the fact that $\rist_G(n)'$ is normal in $G$ and that $G$ is just infinite, we conclude that $HK$ is of finite index in $G$, which shows that $K$ is a complement for $H$.
\end{proof}

In branch groups, commuting almost normal subgroups must always lie (up to finite index) in the complement of each other, as we will now show.

\begin{prop}\label{prop:CommutingAlmostNormalLiesInComplement}
Let $G\leq \Aut(X^*)$ be a just infinite branch group, and let $H_1,H_2\leq G$ be two commuting almost normal subgroups of $G$. Let $K_1,K_2\leq G$ be complements of $H_1$ and $H_2$, respectively. Then, $H_1\cap K_2$ is of finite index in $H_1$ (and similarly, $H_2\cap K_1$ is of finite index in $H_2$).
\end{prop}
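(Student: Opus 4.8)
The plan is to show that $H_1 \cap K_2$ being of infinite index in $H_1$ would force an infinite-rank-like abelian phenomenon that is forbidden by Corollary~\ref{cor:CenterAlmostNormalIsTrivial}. First I would set up the ambient structure: by Theorem~\ref{thm:StructureAlmostNormalSubgroups}, there are levels $m_1, n_1$ and a subset $V_1 \subseteq X^{n_1}$ with $H_1 \leq \rist_G(V_1)$ and $\rist_G(m_1)' \cap \rist_G(V_1) \leq H_1$; similarly for $H_2$ with data $m_2, n_2, V_2$ and for the complements $K_1, K_2$ (which are themselves almost normal). Passing to a common level $n = \max\{n_1, n_2\}$ and $m = \max\{m_1, m_2\}$ and refining $V_1, V_2$ to subsets of $X^n$, I may assume everything is described at level $n$ with the commutator rigid stabiliser $\rist_G(m)'$. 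The key structural fact I would extract is that $K_2$, being a complement of $H_2$, satisfies $[H_2, K_2] = 1$ and contains, up to finite index, the ``full'' piece $\rist_G(m)' \cap \rist_G(W_2)$ where $W_2 = X^n \setminus V_2$ (this is exactly how complements were constructed in the preceding proposition, and any two complements agree up to finite index because their product with $H_2$ is finite-index and they are almost normal with trivial-center behaviour).

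Next I would analyse the interaction of supports. Since $H_1$ and $H_2$ commute, and $H_2 \leq \rist_G(V_2)$ while $H_1 \leq \rist_G(V_1)$, consider the ``overlap'' $V_1 \cap V_2^{\uparrow}$ (vertices in $V_1$ comparable to $V_2$). On the part of $V_1$ that is incomparable to all of $V_2$, the subgroup $\rist_G(m)' \cap \rist_G(V_1)$ already lies in $\rist_G(W_2)$, hence (up to finite index) in $K_2$. So the only obstruction to $H_1 \cap K_2$ being finite-index in $H_1$ comes from the part of $H_1$ supported on vertices $v \in V_1$ that are comparable to some vertex of $V_2$. On each such $v$, the projection $\varphi_v(H_1)$ must commute with $\varphi_v(H_2)$ (both being nontrivial sections of commuting subgroups — here self-similarity and the cocycle identity are used). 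The heart of the argument is then: a nontrivial almost normal subgroup of $G$ has trivial center (Corollary~\ref{cor:CenterAlmostNormalIsTrivial}), and more sharply, two commuting subgroups of a branch group cannot both act nontrivially ``deep below'' the same vertex without one of them centralising an element of $\rist_G(w)'$ for a moved $w$ — contradicting the center computation inside that almost normal subgroup. Thus $\varphi_v(H_2)$ acts trivially below $v$ whenever $\varphi_v(H_1)$ does not, forcing $v \notin V_2$ after all (up to passing to a deeper level), i.e. the bad overlap is empty up to finite index.

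Concretely, I would argue by contradiction: suppose $H_1 \cap K_2$ has infinite index in $H_1$. Then there is an infinite descending chain witnessing elements $h \in H_1 \setminus K_2$ that move vertices arbitrarily deep below some fixed $v \in V_1 \cap V_2^{\uparrow}$. For such $h$ and a suitably deep moved vertex $w$ with $h \cdot w \neq w$, one has $\rist_G(w)' \leq H_2$ (by the containment $\rist_G(m)' \cap \rist_G(V_2) \leq H_2$, valid once $w$ is deep enough and comparable to $V_2$), and then $[h, \rist_G(w)'] \neq 1$ by the now-familiar conjugation computation $h\rist_G(w)'h^{-1} = \rist_G(h\cdot w)' \neq \rist_G(w)'$, contradicting $[H_1, H_2] = 1$. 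This is the only place real work happens; the rest is bookkeeping with levels and rigid stabilisers.

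The main obstacle I expect is the bookkeeping needed to pass cleanly between the level-$n$ descriptions of $H_1$, $H_2$, $K_1$, $K_2$ simultaneously — in particular, justifying that ``complement up to finite index'' lets me replace $K_2$ by the canonical complement $\rist_G(m)' \cap \rist_G(W_2)$ without loss, and controlling the finitely many vertices where supports overlap. Once that is in place, the commutator computation $h\rist_G(w)'h^{-1} = \rist_G(h\cdot w)'$ together with the triviality of centers of almost normal subgroups does all the conceptual heavy lifting, exactly as in the proof of Corollary~\ref{cor:CenterAlmostNormalIsTrivial}.
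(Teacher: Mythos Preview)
Your approach is far more elaborate than necessary and contains a genuine circularity. The step where you propose to ``replace $K_2$ by the canonical complement $\rist_G(m)' \cap \rist_G(W_2)$'' is not innocent: to justify that an arbitrary complement $K_2$ agrees with this canonical one up to finite index, you would need to show that any two complements of $H_2$ are commensurable, and the natural way to do that is precisely the proposition you are trying to prove (take $H_1$ to be the canonical complement). Without this replacement, your geometric argument about supports $V_1, V_2, W_2$ never connects back to the given $K_2$. Separately, the contradiction setup ``infinite index implies elements of $H_1 \setminus K_2$ moving vertices arbitrarily deep below some fixed $v$'' is not a consequence of infinite index and would need independent justification.

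The paper's proof bypasses all tree structure. It uses only the \emph{definition} of complement and Corollary~\ref{cor:CenterAlmostNormalIsTrivial}: since $H_2K_2$ has finite index in $G$, the intersection $H_1 \cap H_2K_2$ has finite index in $H_1$, so it suffices to show $H_1 \cap H_2K_2 = H_1 \cap K_2$. Given $g \in H_1 \cap H_2K_2$, write $g = hk$ with $h \in H_2$, $k \in K_2$; then $h = gk^{-1}$ commutes with $H_2$ because both $g \in H_1$ and $k \in K_2$ do, so $h \in Z(H_2) = \{1\}$ and $g = k \in K_2$. No rigid stabilisers, no levels, no Theorem~\ref{thm:StructureAlmostNormalSubgroups}. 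You correctly identified Corollary~\ref{cor:CenterAlmostNormalIsTrivial} as the key input, but you are applying it to the wrong subgroup: the trick is to force an element into $Z(H_2)$, not to run a support-overlap analysis.
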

\begin{proof}
By assumption, we have $H_2K_2$ of finite index in $G$. Therefore, $H_1\cap H_2K_2$ is of finite index in $H_1$. It thus suffices to show that $H_1\cap H_2K_2= H_1\cap K_2$. Let $g\in H_1\cap H_2K_2$ be any element. Then, there exist $h\in H_2$, $k\in K_2$ such that $g=hk$. Since $H_1$ and $K_2$ both commute with $H_2$ by assumption, it follows that $g$ and $k$ both commute with $H_2$, which forces $h$ to be in the center of $H_2$. By Corollary~\ref{cor:CenterAlmostNormalIsTrivial}, this means that $h=1$, which shows that $g\in H_1\cap K_2$ and thus that $H_1\cap H_2K_2 = H_1\cap K_2$, since $g$ was arbitrary.
\end{proof}

\section{Tree-primitive actions}\label{section:TreePrimitivity}

In order to prove our main result, we will need to make an extra assumption about the action of our group on its corresponding rooted tree, which we call \emph{tree-primitivity}. It can be seen as an adaptation of primitivity to groups acting on rooted trees.
Recall that an action of a group $G$ on a set is called \emph{primitive} if there are no non-trivial $G$-invariant partitions of the set. For any $n>1$, the action of a group $G\leq \Aut(X^*)$ on $X^n$ can never be primitive, since $G$ preserves the non-trivial partitions $\{vX^{n-m}\}_{v\in X^m}$ for all $0<m<n$. Our notion of tree-primitivity is thus a version of primitivity adjusted to take this fact into account.

\begin{defn}\label{defn:treeprim}
Let $G\leq \Aut(X^*)$ be a group of automorphisms of $X^*$. We say that the action of $G$ on $X^*$ is \emph{tree-primitive} if for every $n\in \N$, the only $G$-invariant partitions of $X^n$ are the partitions $\{vX^{n-m}\}_{v\in X^m}$ for $0\leq m \leq n$.
\end{defn}

\begin{rem}
For any group $G\leq \Aut(X^*)$, its action on $X^*$ naturally extends to an action on $X^{\infty}$, the set of right-infinite words in the alphabet $X$. By declaring all sets of the form $vX^{\infty}$ open, for all $v\in X^*$, one turns $X^{\infty}$ into a topological space, homeomorphic to the Cantor set. One could define an analogue (and possibly more natural) version of tree-primitivity for the action of $G$ on $X^\infty$ by declaring that the action of $G$ on $X^\infty$ is \emph{tree-primitive} if the only $G$-invariant partitions of $X^\infty$ into clopen sets are the partitions $\{vX^{\infty}\}_{v\in X^n}$ for $n\in \N$. It is straightforward to show that the action of $G$ on $X^*$ is tree-primitive if and only if the action of $G$ on $X^\infty$ is tree-primitive.
\end{rem}

\begin{rem}
Let $G\leq \Aut(X^*)$ be a branch group with a tree-primitive action on $X^*$. Then, using Rubin's theorem~\cite[Theorem 3.1]{Rubin96}, one can show that the branch action of $G$ is unique, in the sense of Grigorchuk and Wilson~\cite{GrigorchukWilson03b}.
\end{rem}

We now present sufficient conditions for the action of a group to be tree-primitive. In light of the above remark, it is perhaps not surprising that these conditions appear to be closely related to the necessary conditions for the uniqueness of the branch action obtained by the second author and Wilson in~\cite{GrigorchukWilson03b}.

\begin{prop}\label{prop:TreePrimitivity}
Let $G\leq \Aut(X^*)$ be a group acting spherically transitively on~$X^*$. Suppose that
\begin{enumerate}[label=(\roman*)]
\item for every $v\in X^*$, the action of $\St_G(v)$ on $vX$ is primitive,\label{item:PrimitiveAction}
\item for every $n\geq 1$, for every pair of distinct elements $u, v\in X^{n}$ and for every $x,y\in X$, there exists $g\in G$ such that $g\cdot vx = vx$ but $g\cdot uy = uy'$ with $y'\ne y$.\label{item:CanFixOneCousinMoveOther}
\end{enumerate}
Then the action of $G$ on $X^*$ is tree-primitive.
\end{prop}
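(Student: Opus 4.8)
The plan is to fix $n \in \N$ and show that any $G$-invariant partition $\PP$ of $X^n$ must be one of the $\{vX^{n-m}\}_{v\in X^m}$. I would argue by induction on $n$, the case $n=0$ (or $n=1$) being the hypothesis that $\St_G(\epsilon) = G$ acts primitively on $X = \epsilon X$. So suppose the statement holds for $X^{n-1}$ and let $\PP$ be a $G$-invariant partition of $X^n$. There are two natural maps to exploit: the ``ancestor map'' $X^n \to X^{n-1}$, $vx \mapsto v$ (for $v \in X^{n-1}$, $x \in X$), and the projections $\St_G(v) \to \Aut(vX)$ onto the action on the children of a fixed vertex. The idea is to use these to transfer $\PP$ up and down the tree until it is forced into the standard form.

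The key dichotomy: either every block of $\PP$ is contained in a single set $vX$ for some $v \in X^{n-1}$, or some block meets two distinct $vX$ and $uX$. In the second (``spread out'') case, I would use hypothesis~\ref{item:CanFixOneCousinMoveOther} to derive a contradiction unless $\PP$ is the trivial one-block partition $\{X^n\} = \{\epsilon X^n\}$: given a block $P$ containing $vx$ and $uy$ with $v \neq u \in X^{n-1}$, the element $g$ from~\ref{item:CanFixOneCousinMoveOther} fixes $vx$ but sends $uy$ to $uy' \neq uy$; since $g$ preserves $P$ (it fixes the element $vx \in P$, so $gP$ is the block containing $vx$, hence $gP = P$), we get $uy' \in P$ too. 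Iterating, $P$ contains all of $uX$; and then, transitivity of $\St_G(v)$-type arguments combined with spherical transitivity should let one absorb more and more vertices, eventually forcing $P = X^n$. In the first case, $\PP$ refines $\{vX\}_{v \in X^{n-1}}$, so for each $v$ the blocks of $\PP$ inside $vX$ form an $\St_G(v)$-invariant partition of the children of $v$; by hypothesis~\ref{item:PrimitiveAction} this partition is either all singletons or all of $vX$. Spherical transitivity (or the conjugacy of the $\St_G(v)$ under $G$) forces the same alternative at every $v \in X^{n-1}$ simultaneously. If it is ``all of $vX$'' everywhere, then $\PP$ is the image under the ancestor map of a $G$-invariant partition of $X^{n-1}$, to which the induction hypothesis applies, yielding $\PP = \{wX^{n-m}\}_{w \in X^m}$ for some $m$. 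If it is ``singletons'' everywhere, then $\PP = \{vx\}_{vx \in X^n}$, the partition into singletons, which is $\{wX^0\}_{w\in X^n}$.

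I expect the main obstacle to be the ``spread out'' case: making precise the iteration that forces a block meeting two cousins to swallow the entire level $X^n$. One must be careful that the element $g$ supplied by~\ref{item:CanFixOneCousinMoveOther} genuinely stabilises the block (this uses that a $G$-invariant partition is permuted by $G$, and an element fixing a point of a block fixes that block setwise), and then one needs a bootstrapping argument: having shown $P \supseteq uX$, one picks a new pair of cousins — say an element of $uX$ and an element $wz$ with $w \notin \{u\}$ — and reapplies~\ref{item:CanFixOneCousinMoveOther} (now with the roles arranged so that some vertex above $u$ is fixed while a vertex above $w$ moves), using spherical transitivity to reach every $w \in X^{n-1}$. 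Care is also needed in the first case to synchronise the alternatives across different $v \in X^{n-1}$: here one uses that for $u, v \in X^{n-1}$ there is $g \in G$ with $g \cdot v = u$, whence $g$ conjugates $\St_G(v)$ into $\St_G(u)$ and carries the partition-of-$vX$ to the partition-of-$uX$, so the two cannot be of different types. Modulo these bookkeeping points, the argument is a fairly standard ``primitivity propagates up a tree'' induction.
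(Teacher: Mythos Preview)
There is a genuine gap in your spread-out case: the conclusion $P=X^n$ is false in general. Take $|X|=2$, $n=3$, and $\PP=\{wX^2\}_{w\in X}$. The block $0X^2$ contains $000$ and $010$, whose parents $00,01\in X^2$ are distinct, so you are in your spread-out case; yet $\PP$ has two blocks, not one. More generally every standard partition $\{wX^{n-m}\}_{w\in X^m}$ with $0\le m<n-1$ lands in your spread-out case, so that case must account for all of them, not just $m=0$. Your swallowing argument cannot reach $P=X^n$: once $P\supseteq uX$, applying hypothesis~(ii) with a fixed point chosen in $uX$ only tells you that $g$ stabilises $P$ setwise, which gives no information about whether a given $wz$ with $w\ne u$ lies in $P$. (The step ``iterating, $P$ contains all of $uX$'' is also loose: condition~(ii) only produces \emph{some} $y'\ne y$, so iteration may cycle between two values; what you actually need here is primitivity of $\St_G(u)$ on $uX$, not iteration.)

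The paper splits the cases the other way round: first restrict $\PP$ to each $vX$ and apply hypothesis~(i), obtaining---uniformly in $v$, by transitivity on $X^{n-1}$---either (a) every $vX$ lies inside a single block, or (b) every block meets every $vX$ in at most one point. In case~(a) the partition has the form $\{P_i'X\}_i$ for a $G$-invariant partition $\{P_i'\}_i$ of $X^{n-1}$, and induction on $n$ finishes. In case~(b), if some block $P$ has two elements $v_1x_1,v_2x_2$ then $v_1\ne v_2$; the element $g$ from~(ii) fixing $v_1x_1$ moves $v_2x_2$ to some $v_2y'$, which lies outside $P$ since $|P\cap v_2X|\le 1$, contradicting $gP=P$. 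So case~(b) forces $\PP$ to be discrete, with no bootstrapping required. Your argument can be repaired by showing that in your spread-out case every block is a union of fibres $vX$ (use~(ii) together with primitivity to prove that $P\cap wX\ne\emptyset$ implies $P\supseteq wX$) and then descending to $X^{n-1}$; but this is exactly the paper's case~(a), and the inductive step that carries the real weight belongs there, not in your contained case, which only ever produces $m\in\{n-1,n\}$.
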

\begin{proof}
Let us fix $n\in \N$, and let $P=\{P_i\}_{i=1}^{k}$ be a $G$-invariant partition of $X^n$. We need to show that this partition must be of the form $P = \{vX^{n-m}\}_{v\in X^{m}}$ for some $m\leq n$. If $n=0$, this is obvious, and if $n=1$, it follows directly from the primitivity of the action of $G$ on $X$. Let us thus assume that $n>1$.

Let $v\in X^{n-1}$ be any element. Then, the collection of non-empty sets of the form $P_i\cap vX$ define a partition of $vX$, which must be $\St_G(v)$-invariant, given that $P$ is $G$-invariant. By the primitivity of the action of $\St_G(v)$ on $vX$, this partition must be trivial. This means that exactly one of the following two cases is true:
\begin{enumerate}[label=(\alph*)]
\item there exists some $i\in \{1,2,\dots, k\}$ such that $P_i\cap vX = vX$,\label{item:CoarsetPartition}
\item $|P_i\cap vX|\leq 1$ for all $1\leq i \leq k$.\label{item:FinestPartition}
\end{enumerate}
Since the action of $G$ on $X^{n-1}$ is transitive, if case~\ref{item:CoarsetPartition} holds for some $v\in X^{n-1}$, then it holds for all $v\in X^{n-1}$ (and likewise, if~\ref{item:FinestPartition} holds for some $v\in X^{n-1}$, then it must hold for all).

Let us first suppose that case~\ref{item:FinestPartition} holds for all $v\in X^{n-1}$. Let us suppose that there exists $1\leq i \leq k$ such that $|P_i|\geq 2$, and let $w_1,w_2\in P_i$ be two different elements. There exist $x_1,x_2\in X$ and $v_1, v_2\in X^{n-1}$ such that $w_1=v_1x_1$ and $w_2=v_2x_2$. Since $|P_i\cap vX|\leq 1$ for all $v\in X^{n-1}$, we must have $v_1\ne v_2$. Therefore, by condition~\ref{item:CanFixOneCousinMoveOther}, there exists $g\in G$ such that $g\cdot w_1=w_1$ but $g\cdot w_2 = v_2 y\ne w_2$ for some $y\in X\setminus \{x_2\}$. Now, again using the fact that $|P_i\cap v_2X|\leq 1$, we see that $g\cdot w_2\notin P_i$, which means that $g\cdot P_i\ne P_i$. However, we have $g\cdot w_1=w_1\in P_i$, which by the $G$-invariance of the partition $P$ implies that $g\cdot P_i=P_i$, a contradiction. We conclude that every set in the partition $P$ must have size one, and thus $P=\{v\}_{v\in X^n}$ has the required form.

Let us now suppose that case~\ref{item:CoarsetPartition} holds for all $v\in X^{n-1}$. In this case, the partition $P$ has the form $P=\{P_i'X\}_{i=1}^{k}$, where $P'=\{P_i'\}_{i=1}^{k}$ is a partition of $X^{n-1}$ which must necessarily be $G$-invariant, as $P$ is. Therefore, by induction on $n$, we can conclude that $P' = \{vX^{n-1-m}\}_{v\in X^{m}}$ for some $m\leq n-1$. It follows that $P = \{vX^{n-m}\}_{v\in X^{m}}$.
\end{proof}

If the group $G$ under consideration is self-replicating, there exists a simpler sufficient condition for tree-primitivity that only asks to understand the action of $G$ on the first two levels of the tree.

\begin{prop}\label{prop:TreePrimitivityReplicating}
Let G be a self-replicating group acting spherically transitively on $X^*$ such that the action of $G$ on $X$ is primitive and such that $\St_G(X)$ acts spherically transitively on $xX^*$ for all $x\in X$. Suppose furthermore that for every $v,w\in X^2$, the subgroup $\St_G(\{v,w\})$ acts spherically transitively on $vX^*$. If there exist $x_0,y_0\in X$ with $x_0\ne y_0$ such that for every $v\in x_0X$ and $w\in y_0X$, we have $\St_G(v)\cap \St_G(y_0) \not\leq \St_G(w)$, then the action of G on $X^*$ is tree-primitive.
\end{prop}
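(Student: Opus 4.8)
The plan is to verify the hypotheses of Proposition~\ref{prop:TreePrimitivity}, namely condition~\ref{item:PrimitiveAction} (the action of $\St_G(v)$ on $vX$ is primitive for every $v\in X^*$) and condition~\ref{item:CanFixOneCousinMoveOther} (for distinct $u,v\in X^n$ and any $x,y\in X$ there is $g\in G$ fixing $vx$ but moving $uy$). For condition~\ref{item:PrimitiveAction}, I would use self-replicatingness: given $v\in X^*$, the map $\varphi_v\colon\St_G(v)\to G$ is surjective, and moreover it intertwines the action of $\St_G(v)$ on $vX$ with the action of $G$ on $X$ (this is immediate from the defining identity $g\cdot(vw)=(g\cdot v)(g_v\cdot w)$). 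Since the action of $G$ on $X$ is primitive by hypothesis, so is the action of $\St_G(v)$ on $vX$. That disposes of~\ref{item:PrimitiveAction}.

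\textbf{The real work is condition~\ref{item:CanFixOneCousinMoveOther}.} Fix $n\geq 1$, distinct $u,v\in X^n$, and $x,y\in X$. The strategy is a two-step reduction using the transitivity hypotheses to move $u$ and $v$ into a ``standard position'' on the second level, where the last (non-symmetric) hypothesis about $\St_G(v_0)\cap\St_G(y_0)\not\leq\St_G(w)$ can be applied, and then to transport the resulting element back. First I would choose the index $m$ with $1\le m\le n$ at which the geodesics from the root to $u$ and to $v$ diverge: write $u=pu'$, $v=pv'$ with $p\in X^{m-1}$, $u'\in X^{n-m+1}$, $v'\in X^{n-m+1}$, and the first letters of $u'$ and $v'$ distinct. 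Using spherical transitivity of $G$ (and the hypothesis that $\St_G(X)$ acts spherically transitively on each $xX^*$, iterated down the common prefix $p$), there is an element of $G$ carrying the pair $(u,v)$ to a pair $(\tilde u,\tilde v)$ whose first two letters are $(x_0,\ast)$ and $(y_0,\ast)$ respectively for the distinguished $x_0,y_0$; conjugating the element we seek by this group element lets us assume $u\in x_0X^{n-1}$ and $v\in y_0X^{n-1}$ — wait, we need to be careful about which of $u,v$ plays which role, but the two cases are symmetric in form (the hypothesis is stated for one ordering, and the other is obtained by noting $x_0\ne y_0$ are interchangeable after relabelling, or by a separate but parallel argument). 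Next, using the hypothesis that $\St_G(\{v_1,w_1\})$ acts spherically transitively on $v_1X^*$ for second-level vertices $v_1,w_1$, I would find an element fixing the length-$2$ prefixes of both $u$ and $v$ and carrying $u$ to $u_0y$-something and $v$ to $v_0x$-something in a controlled way so that the problem is reduced to: find $g\in\St_G(v_0)\cap\St_G(y_0)$ with $g\notin\St_G(w)$ for the appropriate $w\in y_0X$. The last hypothesis provides exactly such a $g$; then $g$ fixes $v$ (hence $vx$, after arranging things so the deeper coordinates are also fixed — here again spherical transitivity of the relevant two-point stabilisers lets us upgrade ``fixes the level-$2$ prefix'' to ``fixes $vx$'' by composing with a further element of the rigid-enough stabiliser) while moving $uy$.

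\textbf{The main obstacle} is the bookkeeping in this reduction: the hypotheses are phrased at levels $1$ and $2$, but condition~\ref{item:CanFixOneCousinMoveOther} lives at arbitrary level $n$, so one must repeatedly use the spherical-transitivity-of-stabilisers hypotheses to ``descend'' the problem, and at each stage one must ensure that the element produced genuinely \emph{fixes} $vx$ (not merely some prefix) while \emph{moving} $uy$. The cleanest way to handle the ``fixes $vx$ exactly'' requirement is to work modulo $\St_G(vx)$ from the outset: it suffices to find $g$ fixing $vx$ and some descendant of $vx$-side data while sending $uy$ off its own fibre, and then multiply by an element of $\St_G(vx)$ that fixes the rest — this is where $\St_G(\{v,w\})$ acting spherically transitively on $vX^*$ is used, to realise arbitrary ``independent'' motion below $v$ and below $u$ simultaneously. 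Once the reduction is complete and the distinguished hypothesis $\St_G(v_0)\cap\St_G(y_0)\not\leq\St_G(w)$ is invoked, both hypotheses of Proposition~\ref{prop:TreePrimitivity} hold and tree-primitivity follows immediately. One should also double-check the degenerate small cases ($n=1$, or $u,v$ agreeing in their first letter, in which case the branch-off happens deeper and one applies the argument inside $\St_G(X)$ acting on the relevant $xX^*$, using its spherical transitivity there).
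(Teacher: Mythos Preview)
Your approach via Proposition~\ref{prop:TreePrimitivity} is different from the paper's, which argues by direct induction on the level of the partition without appealing to Proposition~\ref{prop:TreePrimitivity}. Condition~\ref{item:PrimitiveAction} is indeed immediate from self-replication, as you say. The gap is in your verification of condition~\ref{item:CanFixOneCousinMoveOther}. That condition must hold for \emph{every} ordered pair of distinct vertices $u,v\in X^n$, whereas the last hypothesis of Proposition~\ref{prop:TreePrimitivityReplicating} is stated only for the single, specific ordered pair $(x_0,y_0)$, and is genuinely asymmetric in those two letters. Your reduction to ``standard position'' requires finding $h\in G$ sending the pair of (distinct) first letters of $u,v$ to the pair $(x_0,y_0)$ in the correct order; this is $2$-transitivity of $G$ on $X$, which is \emph{not} assumed---only primitivity is. Your remark that ``$x_0\ne y_0$ are interchangeable after relabelling'' does not help: $x_0,y_0$ are fixed data, the hypothesis does not give the symmetric statement with their roles swapped, and in the key application to GGS groups (Corollary~\ref{cor:GGSTreePrimitivity}) the action of $G$ on $X$ is the regular cyclic action of $\Z/p\Z$, which for $p\geq 3$ is primitive but not even $2$-homogeneous. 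So for those groups your reduction step simply fails.

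The paper circumvents this by never trying to establish condition~\ref{item:CanFixOneCousinMoveOther} in full. Instead it runs an induction directly on the level of the $G$-invariant partition: self-replication and the inductive hypothesis force the restriction of the partition to each first-level subtree to be standard, and then either a ``delete the last letter'' trick reduces to the previous level, or (in the extreme case) one builds a $G$-invariant equivalence relation on $X$ from the partition. Primitivity on $X$ forces this relation to have either all singleton classes (done) or a single class; only in the latter case does one need the separating hypothesis, and then it suffices to invoke it for the \emph{one} pair $x_0,y_0$ actually provided, together with the level-$2$ stabiliser hypothesis, to derive a contradiction. This equivalence-relation-plus-primitivity manoeuvre is precisely the missing idea that lets one get away with a single distinguished pair rather than $2$-transitivity.
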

\begin{proof}
We prove the result by induction on the level $n$ on which we consider the $G$-invariant partition.
If $n=1$, then this is just the assumption that the action is primitive on $X$.
Now, suppose that the result holds for any $G$-invariant partition of $X^n$, and let us consider a $G$-invariant partition $\PP$ of $X^{n+1}$.

For every $x\in X$, the restriction of this partition to $xX^{n}$ gives us a $\St_G(x)$-invariant partition of $xX^n$.
Applying $\varphi_x$ and using the fact that $G$ is self-replicating, we obtain a $G$-invariant partition of $X^n$.
Thus, by our induction hypothesis, there must exist some $m_x\leq n$ such that this partition is $\{vX^{n-m_x}\}_{v\in X^{m_x}}$, which means that the restriction of $\PP$ to $xX^n$ is the partition $\{xvX^{n-m_x}\}_{v\in X^{m_x}}$.
Since the action of $G$ on $X$ is transitive, and since $\PP$ is $G$-invariant, it follows that for each pair $x,y\in X$, there exists some $g\in G$ such that the restriction of $\PP$ to $yX^n$ is the image under $g$ of the restriction of $\PP$ to $xX^n$.
In other words, we have $\{yvX^{n-m_y}\}_{v\in X^{m_y}} = \{g\cdot xvX^{n-m_x}\}_{v\in X^{m_x}}$, from which it follows that $m_x=m_y$.
Since $m_x$ is independent of $x$, let us rename it simply $m$.

It follows from the above that the parts of the partition $\PP$ are unions of sets of the form $vX^{n-m}$ for $v\in X^{m+1}$.
If $m<n$, then we can form a partition $\PP'$ of $X^n$ by simply deleting the last letter from the parts of the partition $\PP$.
In other words, two elements $w_1,w_2\in X^n$ belong to the same part of $\PP'$ if and only if for one (and thus for every) $x\in X$, the elements $w_1x$ and $w_2x$ belong to the same part of $\PP$.
It is clear that we can reconstruct the partition $\PP$ from the partition $\PP'$ by multiplying every part of the partition $\PP'$ by $X$ on the right.
Since $\PP$ is $G$-invariant, it is not hard to see that the partition $\PP'$ must also be $G$-invariant.
Thus by our induction hypothesis, the partition $\PP'$ is of the form $\{vX^{n-k}\}_{v\in X^k}$ for some $1\leq k \leq n$, from which it follows that $\PP = \{vX^{n+1-k}\}_{v\in X^k}$, which is what we wanted to show.

It thus only remains to treat the case where $m=n$.
In this case, for any $x\in X$, the intersection of a part of $\PP$ with $xX^n$ contains at most one element.
Let us define a relation $\sim$ on $X$ by $x\sim y$ if and only if there exist $w_1,w_2\in X^n$ such that $xw_1$ and $yw_2$ belong to the same part of the partition $\PP$.
It is clear that $\sim$ is a reflexive and symmetric relation, and it is also obvious from the definition that it is $G$-invariant.
Let $x,y,z\in X$ be such that $x\sim y$ and $y\sim z$.
Then, there exist $w_1, w_2, w_2', w_3\in X^n$ such that $xw_1$ and $yw_2$ belong to the same part, and $yw_2'$ and $zw_3$ belong to the same part.
As the action of $\St_G(X)$ on $yX^{n}$ is transitive, there exists $g\in \St_G(X)$ such that $g\cdot yw_2 = yw_2'$.
Since $g\in \St_G(X)$, there exists $w_1'\in X^n$ such that $g\cdot xw_1 = xw_1'$, and by $G$-invariance of $\PP$, we conclude that $xw_1'$ is in the same part as $yw_2'$, which is itself in the same part as $zw_3$.
This shows that $x\sim z$ and thus that the relation $\sim$ is a $G$-invariant equivalence relation.
Therefore, the equivalence classes of $\sim$ form a $G$ invariant partition of $X$, which by the primitivity of the action of $G$ on $X$ must consist either of singletons or of only one part, namely~$X$.
In the first case, we conclude that the partition $\PP$ consists only of singletons, since no two elements in the same part can have different first letters and no two elements with the same first letter can be in the same part.
The only thing that remains to finish the proof is thus to prove that the second case is impossible.

Let us suppose that $x\sim y$ for all $x,y\in X$, and let $x_0,y_0\in X$ be as in the statement of the proposition, meaning that for every $v\in x_0X$ and $w\in y_0X$, we have $\St_G(v)\cap \St_G(y_0) \not\leq \St_G(w)$.
Since $x_0\sim y_0$, there must exist $x_1,y_1\in X$ and $w_1,w_2\in X^{n-1}$ such that $x_0x_1w_1$ and $y_0y_1w_2$ belong to the same part of the partition.
By our assumption on $x_0$ and $y_0$, there exist $g\in G$ and $y_2\in X$ with $y_2\ne y_1$ such that $g\cdot x_0x_1 = x_0x_1$ but $g\cdot y_0y_1 = y_0y_2$.
Let $w_1' \in X^{n-1}$ be such that $g\cdot x_0x_1w_1 = x_0x_1w_1'$.
By our assumptions, the action of $\St_G(x_0x_1)\cap \St_G(y_0y_2)$ on $x_0x_1X^*$ is spherically transitive.
Therefore, there exists $h\in \St_G(x_0x_1)\cap \St_G(y_0y_2)$ such that $h\cdot x_0x_1w_1' = x_0x_1 w_1$.
Since $h\in \St_G(y_0y_2)$, there exists $w_2'\in X^{n-1}$ such that $h\cdot y_0y_2w_2 = y_0y_2w_2'$.
As the partition $\PP$ is $G$-invariant, the elements $hg\cdot x_0x_1w_1 = x_0x_1w_1$ and $hg \cdot y_0y_1w_2 = y_0y_2w_2'$ belong to the same part of the partition, from which it follows that $y_0y_1w_2$ and $y_0y_2w_2'$ belong to the same part of the partition.
This is impossible, since $y_0y_1w_2\ne y_0y_2w_2'$, but different elements in the same part of the partition $\PP$ must have different first letter.
This case can therefore never occur, and the result is thus proved.
\end{proof}

Using Proposition~\ref{prop:TreePrimitivityReplicating}, we can easily show that both the Grigorchuk group and the GGS groups act tree-primitively.
­\begin{cor}\label{cor:GrigTreePrimitivity}
The action of $\Grig=\langle a,b,c,d \rangle$ (see Example~\ref{example:GrigorchukGroup}) on $\{0,1\}^*$ is tree-primitive.
\end{cor}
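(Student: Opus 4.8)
The plan is to apply Proposition~\ref{prop:TreePrimitivityReplicating} to $G=\Grig$ with $X=\{0,1\}$. The group $\Grig$ is self-replicating and acts spherically transitively on $X^*$ (Example~\ref{example:GrigorchukGroup}), and since $|X|=2$ its action on $X$ is automatically primitive. Moreover $\St_\Grig(X)=\St_\Grig(0)=\St_\Grig(1)=\St_\Grig(X^1)$, and $\varphi_x(\St_\Grig(X^1))=\Grig$ by self-replication, so the spherically transitive action of $\Grig$ on $X^*$ lifts to a spherically transitive action of $\St_\Grig(X)$ on $xX^*$ for each $x\in X$. For the last hypothesis I would take $x_0=0$, $y_0=1$: over a two-letter alphabet, fixing a level-$2$ vertex $0x'$ forces fixing $0$ and hence $1$, so $\St_\Grig(0x')\cap\St_\Grig(1)=\St_\Grig(0x')$; and the element $aca^{-1}$, which satisfies $\varphi_0(aca^{-1})=d$ and $\varphi_1(aca^{-1})=a$, fixes both $00$ and $01$ while moving both $10$ and $11$, giving the required non-containment $\St_\Grig(v)\cap\St_\Grig(1)\not\leq\St_\Grig(w)$ for all $v\in 0X$ and $w\in 1X$ simultaneously.

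The substantive hypothesis is that $\St_\Grig(\{v,w\})$ acts spherically transitively on $vX^*$ for every pair of distinct $v,w\in X^2$, and I would split this according to whether $v$ and $w$ have the same first letter. If $v=xv'$ and $w=xw'$ share their first letter, then $\St_\Grig(\{v,w\})=\{g\in\St_\Grig(x)\mid\varphi_x(g)\in\St_\Grig(\{v',w'\})\}$, and since $\{v',w'\}=X^1$ we have $\St_\Grig(\{v',w'\})=\St_\Grig(X^1)$; applying the surjectivity $\varphi_x(\St_\Grig(x))=\Grig$ and then $\varphi_{v'}(\St_\Grig(X^1))=\varphi_{v'}(\St_\Grig(v'))=\Grig$, one gets $\varphi_{xv'}(\St_\Grig(\{v,w\}))=\Grig$, so the action on $xv'X^*$ is as transitive as possible. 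If $v$ and $w$ have different first letters, the key observation — again using $|X|=2$ — is that fixing $v=0v'$ forces $\varphi_0(g)$ to fix $0$ and hence $1$, so $g$ fixes $00$ and $01$, and likewise $w=1w'$ forces $g$ to fix $10$ and $11$; thus $\St_\Grig(\{v,w\})=\St_\Grig(X^2)$ for every transverse pair. Since $\St_\Grig(X^2)$, being the kernel of the action on $X^2$, is normal in $\Grig$, and $\Grig$ acts transitively on $X^2$, it suffices to show that $\varphi_{00}(\St_\Grig(X^2))$ acts spherically transitively on $X^*$. For this I would combine the regular branch structure of $\Grig$ over $K=\langle[a,b]\rangle_\Grig$, which gives $K\leq\varphi_v(\rist_\Grig(v))$ for all $v$, with the explicit element $ada^{-1}\in\St_\Grig(X^2)$ whose section along $00$ is $a$; this exhibits $\varphi_{00}(\St_\Grig(X^2))$ as a finite-index subgroup of $\Grig$ containing $a$ together with a non-trivial normal subgroup, and one then checks that such a subgroup acts transitively on every level of $X^*$.

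The main obstacle is precisely this last step. In contrast to the same-first-letter case, the section of $\St_\Grig(X^2)$ along $00$ is a \emph{proper} finite-index subgroup of $\Grig$, so its spherical transitivity does not follow formally from self-replication and must be verified by a concrete computation with the finite-state structure of $\Grig$ and its regular branch structure over $K$ — equivalently, by the combinatorial fact that an appropriate finite-index subgroup of $\Grig$ that surjects onto $\Grig/K$ and contains $a$ still acts transitively on every $X^n$. The remaining verifications — the same-first-letter case and the non-containment hypothesis — are short and explicit, as indicated above, and I expect the whole argument to then reduce to citing Theorem~\ref{thm:BlockSubgroups}'s hypotheses being met, i.e.\ to invoking Proposition~\ref{prop:TreePrimitivityReplicating}.
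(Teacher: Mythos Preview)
Your approach is the same as the paper's --- apply Proposition~\ref{prop:TreePrimitivityReplicating} --- and all the concrete checks you write out are correct: your choice $x_0=0$, $y_0=1$ with witness $aca^{-1}=(d,a)$ is simply the mirror of the paper's choice $x_0=1$, $y_0=0$ with witness $b=(a,c)$, and your same-first-letter reduction via self-replication is valid.

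The one place where the paper proceeds differently is exactly your ``main obstacle''. Rather than splitting into cases and then trying to argue spherical transitivity of $\varphi_{00}(\St_\Grig(X^2))$ by hand from the regular branch structure, the paper simply cites \cite[Proposition~5.3]{UriaAlbizuri16}, which gives the stronger statement $\varphi_v(\St_\Grig(X^2))=\Grig$ for every $v\in X^2$. Since $\St_\Grig(X^2)\leq \St_\Grig(\{v,w\})$ for \emph{any} pair $v,w\in X^2$, this single citation handles all pairs at once and makes the case split unnecessary. So there is no genuine gap in your plan --- the computation you flag as an obstacle is a known result, and in fact the section is all of $\Grig$, not merely a spherically transitive subgroup. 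Your route via $ada^{-1}$ and the branching subgroup $K$ would also work, but is more labour than needed.
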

\begin{proof}
Let $X=\{0,1\}$.
As discussed in Example~\ref{example:GrigorchukGroup}, $\Grig$ is a self-replicating branch subgroup of $\Aut(X^*)$, and hence acts spherically transitively on $X^*$.
Moreover, since $X=\{0,1\}$, for any $x\in X$ we have $\St_\Grig(x)=\St_\Grig(X)$, which by self-replicatingness acts spherically transitively on $xX^*$.
By~\cite{UriaAlbizuri16}[Proposition 5.3], we have $\varphi_v(\St_\Grig(X^2)) = \Grig$ for all $v\in X^2$, so that the action of $\St_\Grig(v)\cap\St_\Grig(w)$ is spherically transitive on $vX^*$ for all $v,w\in X^2$.
Finally, let $x_0=1$ and $y_0=0$. The element $b=(a,c)$ is in $\St_\Grig(1x)\cap\St_\Grig(0)$ but $b(0y)\neq 0y$ for all $x,y\in X$.
The action of $\Grig$ on $X^*$ hence satisfies the hypothesis of Proposition~\ref{prop:TreePrimitivityReplicating} and is therefore tree-primitive.
\end{proof}

\begin{cor}\label{cor:GGSTreePrimitivity}
The action of a periodic GGS-group $G=G_E$ (see Example~\ref{example:GGS}) on $\{0,\dots,p-1\}^*$ for some prime $p$ is tree-primitive.
\end{cor}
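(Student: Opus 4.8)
The plan is to verify the hypotheses of Proposition~\ref{prop:TreePrimitivityReplicating}, following the strategy of the proof of Corollary~\ref{cor:GrigTreePrimitivity}. Write $X=\{0,\dots,p-1\}$ and $G=G_E$. We may assume $E$ is non-constant---this is necessary for $G$ to be a branch group, since a periodic GGS group with constant $E$ has $E=0$ and is then not spherically transitive on $X^*$. In particular $p$ is odd, and there is an index $j$ with $0\le j\le p-2$ and $e_j\not\equiv 0\pmod p$. By Example~\ref{example:GGS}, $G$ is self-replicating, and since $E$ is non-constant and $p$ is an odd prime it is regular branch over $\gamma_3(G)$; in particular its action on $X^*$ is spherically transitive.

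As $b$ fixes $X=X^1$ pointwise, the action of $G$ on $X$ factors through the cyclic group of order $p$ generated by the image of $a$, which acts regularly; since $|X|=p$ is prime this action is primitive, and since the point stabilisers of a regular action are trivial, $\St_G(x)=\St_G(X)$ for every $x\in X$. Combined with the self-replicating property this gives $\varphi_x(\St_G(X))=\varphi_x(\St_G(x))=G$ for all $x$, so $\St_G(X)$ acts on $xX^*$, via $\varphi_x$, in the same way as the spherically transitive action of $G$ on $X^*$, hence spherically transitively on $xX^*$. For the second-level hypothesis, note that $\St_G(\{v,w\})\supseteq\St_G(X^2)$ for all $v,w\in X^2$; since periodic GGS groups are \emph{super strongly fractal}, that is, $\varphi_v(\St_G(X^2))=G$ for every $v\in X^2$ (the analogue for GGS groups of the property of $\Grig$ used in the proof of Corollary~\ref{cor:GrigTreePrimitivity}; see \cite{UriaAlbizuri16}), the subgroup $\St_G(X^2)$, and a fortiori $\St_G(\{v,w\})$, acts spherically transitively on $vX^*$.

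It remains to exhibit the data for the last hypothesis. Take $x_0=p-1$ and $y_0=j$; these are distinct, and the generator $b$ serves as witness. Since $b$ fixes $X$ pointwise, for every $\ell\in X$ we have $b\cdot(p-1)\ell=(p-1)(b\cdot\ell)=(p-1)\ell$, so $b\in\St_G(v)$ for all $v\in x_0X$, and also $b\in\St_G(y_0)$. On the other hand, $b\cdot y_0\ell=y_0(a^{e_j}\cdot\ell)=y_0(\ell+e_j)\ne y_0\ell$ for every $\ell\in X$, as $e_j\not\equiv 0\pmod p$, so $b\notin\St_G(w)$ for every $w\in y_0X$. Hence $\St_G(v)\cap\St_G(y_0)\not\le\St_G(w)$ for all $v\in x_0X$ and $w\in y_0X$, and Proposition~\ref{prop:TreePrimitivityReplicating} applies, giving that the action of $G$ on $X^*$ is tree-primitive.

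The only step that does not follow formally from the self-replicating property, the branch property, and the elementary description of the regular first-level action is the super strong fractality of periodic GGS groups used in the second paragraph; I expect its verification---which relies on the finer structure theory of these groups---to be the main point of the argument.
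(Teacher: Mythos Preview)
Your proof is correct and follows essentially the same approach as the paper's: both verify the hypotheses of Proposition~\ref{prop:TreePrimitivityReplicating} using the primitivity of the prime cyclic action on $X$, the super strong fractality $\varphi_v(\St_G(X^k))=G$ from~\cite{UriaAlbizuri16}, and the same choice $x_0=p-1$, $y_0=j$ with $e_j\ne 0$, witnessed by $b$. Your treatment is in fact slightly more careful than the paper's in explicitly ruling out the degenerate constant-$E$ case before choosing $j$.
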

\begin{proof}
Let $X=\{0,\dots,p-1\}$.
As discussed in Example~\ref{example:GGS}, GGS groups are self-replicating subgroups of $\Aut(X^*)$.
By definition, $G$ acts on the first level of~$X$ as the subgroup generated by the cyclic permutation $(0\, 1\,\dots\, p-1)$, which is primitive and transitive.
Together with self-replicatingness, this implies that the action of $G$ on $X^*$ is spherically-transitive.
Moreover, by~\cite{UriaAlbizuri16}[Proposition 5.1], $G$ satisfies\footnote{Not all self-replicating groups satisfy this property, see~\cite{UriaAlbizuri16} for counter-examples} $\varphi_v(\St_G(X^k))=G$ for all $k\in \N$ and all $v\in X^k$, the action of $\St_G(X^k)$ on $vX^*$ is hence spherically transitive for all $v\in X^k$.
Now, let $0\leq i \leq p-2$ be such that $e_i\ne 0$, where we write $E=(e_0,\dots, e_{p-2})$ and let $x_0=p-1$, $y_0=i$. Notice that for every $x,y\in X$, the element $b=(a^{e_0},\dots,a^{e_{p-2}},b)$ is in $\St_G(x_0x)\cap\St_G(y_0)$ but $b(y_0y)\neq y_0y$.
The action of $G$ on $X^*$ hence satisfies the hypotheses of Proposition~\ref{prop:TreePrimitivityReplicating} and is therefore tree-primitive.
\end{proof}

\section{Block subgroups and the subgroup induction property}\label{section:BlockSubgroupsAndSIP}

Let us now introduce the two notions which are central to the present article, namely \emph{block subgroups} and the \emph{subgroup induction property}.

\subsection{Block subgroups}

Block subgroups are subgroups of automorphisms of $X^*$ that possess a special form. Abstractly, block subgroups are simply direct products, but some of the direct factors may be embedded ``diagonally''. To make this precise, let us first introduce the notion of a diagonal block subgroup.

\begin{defn}\label{defn:diag}
Let $G\leq \Aut(X^*)$ be a group of automorphisms of $X^*$ and let $V\subseteq X^*$ be a finite set of pairwise incomparable elements of $X^*$. A subgroup $H\leq G$ is said to be a \emph{diagonal block subgroup with supporting vertex set $V$} if
\begin{enumerate}[label=(\roman*)]
\item $H \leq \Srist_G(V)$, where $\Srist_G(V)$ denotes the stabilised rigid stabiliser of $V$, see Definition~\ref{defn:StabsAndRists}~\ref{item:RistAndSRist},
\item $\varphi_v(H)$ is of finite index in $\varphi_v(\St_G(v))$ for all $v\in V$,
\item for all $v\in V$, $\varphi_v$ is injective on $H$.
\end{enumerate}
\end{defn}

In other words, a diagonal block subgroup with supporting vertex set $V$ is a subgroup $H$ that is abstractly isomorphic to a finite index subgroup of $\varphi_v(\St_G(v))$ for some $v\in V$, but which is embedded in $\Srist_G(V)$ ``diagonally'', in the sense that for every $v,w\in V$ and every $h\in H$, the ``$w$ component of $h$'', $\varphi_w(h)$, is uniquely determined by its ``$v$ component'' $\varphi_v(h)$. To illustrate, let us see a couple of important examples of diagonal block subgroups.

\begin{example}
Let $G\leq \Aut(X^*)$ be a branch group and let $v\in X^*$ be any element. Then, the diagonal block subgroups with supporting vertex set $V=\{v\}$ are exactly the finite index subgroups of $\rist_G(v)$.
\end{example}

\begin{example}
Let $G\leq \Aut(X^*)$ be a self-replicating regular branch group over a subgroup $K$. Then, for every $v\in X^*$, there exists a subgroup $K@v\leq \rist_K(v)$ such that $K@v \cong K$. Let $f_v\colon K \rightarrow K@v$ be one such isomorphism, and let $v,w\in X^*$ be two incomparable elements. Then, the subgroup $H=\{f_v(k)f_w(k) \mid k\in K\}$ is a diagonal block subgroup with supporting vertex set $V=\{v,w\}$. This example can be generalised to bigger sets of incomparable vertices in an obvious way, see Figure~\ref{fig:diag} for an example with supporting vertex set of cardinality $3$.
\end{example}

\begin{figure}[htbp]
\includegraphics{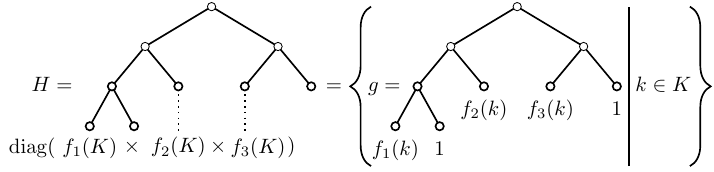}
\caption{A diagonal block subgroup over $K$ with  supporting vertex set of cardinality $3$.}
\label{fig:diag}
\end{figure}

Block subgroups are then simply direct products of diagonal block subgroups over pairwise incomparable supporting vertex sets.

\begin{defn}\label{defn:Block}
Let $G \leq \Aut(X^*)$ be a group of automorphisms of $X^*$ and let $H\leq G$ be a subgroup. Let $V=\bigsqcup_{i=1}^{b}V_i\subseteq X^*$ be a finite set of pairwise incomparable elements of $X^*$ partitioned into $b$ non-empty subsets, and let us denote by $P=\{V_i\}_{i=1}^{b}$ this partition. We say that $H$ is a \emph{block subgroup with supporting partition $P$} if
\[H=H_1\cdots H_b\]
where each $H_i$ is a diagonal block subgroup with supporting vertex set $V_i$. See Figure~\ref{fig:block} for an example.
\end{defn}

\begin{figure}[htbp]
\includegraphics{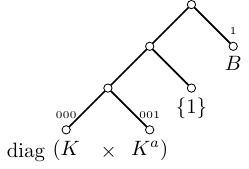}
\caption{A block subgroup with supporting partition $\{\{000,001\},1\}$. Here $H=H_1\cdot H_2$ with $H_1$ a block subgroup over $K$ with supporting partition $\{000,001\}$ and $H_2$ a block subgroup over $B$ with supporting partition $\{B\}$.}
\label{fig:block}
\end{figure}

\begin{rem}
Notice that in the previous definition, each $H_i$ is a subgroup of $\Srist_G(V_i)$. By assumption, when $i\ne j$, any element of $V_i$ is incomparable with any element of $V_j$. Therefore, the subgroups $H_i$ pairwise commute, by Remark~\ref{rem:RistsCommute}. Thus, a block subgroup is truly a group, and it is isomorphic to a direct product of diagonal block subgroups.
\end{rem}

In the case of regular branch groups, one can also define the stronger notion of \emph{regular} block subgroups.

\begin{defn}\label{defn:RegularBlock}
Let $G\leq \Aut(X^*)$ be a regular branch group over a subgroup $K$, and let $H\leq G$ be a block subgroup with supporting partition $P=\{V_i\}_{i=1}^{d}$. We say that $H$ is a \emph{regular block subgroup over $K$} if for all $v\in V=\bigsqcup_{i=1}^{b}V_i$, we have $\varphi_v(H)=K$.
\end{defn}

\subsection{The subgroup induction property}

We now introduce the main property used throughout the article, which is known as the \emph{subgroup induction property}. For this, we first need to define an inductive collection of subgroups of a group acting by automorphisms on $X^*$.
\begin{defn}\label{defn:InductiveClass}
Let $G\leq X^*$ be a self-replicating group of automorphisms of $X^*$ and let $\C$ be a collection of subgroups of $G$.
We say that $\C$ is an \emph{inductive collection} if it satisfies the following properties:
\begin{enumerate}[label=(\Roman*)]
\item $1, G\in \C$,
\item if $H\in \C$ and $L$ contains $H$ as a finite index subgroup, then $L\in \C$,\label{item:ConditionInductiveFiniteIndexOvergroup}
\item if $H\leq \St_G(X)$ is finitely generated and $\varphi_x(H)\in \C$ for all $x\in X$, then $H\in \C$.
\end{enumerate}
\end{defn}

It is clear that if we take $\C$ to be the collection of all subgroups of $G$, then $\C$ must be an inductive collection. Groups with the subgroup induction property are groups for which inductive collections of subgroups cannot be too small, in the sense that they must contain at least all finitely generated subgroups.

\begin{defn}\label{defn:SIP}
Let $G\leq X^*$ be a finitely generated self-replicating group of automorphisms of $X^*$. We say that $G$ has the \emph{subgroup induction property}, or SIP for short, if every inductive collection $\C$ of subgroups of $G$ contains all finitely generated subgroups of $G$.
\end{defn}

\begin{rem}\label{rem:WeakStrong}
In Definition~\ref{defn:InductiveClass}, if we were to replace condition~\ref{item:ConditionInductiveFiniteIndexOvergroup} with

\begin{enumerate}
\item[(II')] if $H\leq L$ is of finite index, then $H\in \C$ if and only if $L\in \C$
\end{enumerate}
this would give rise to a more restrictive definition of an inductive collection, and consequently to a weaker version of the subgroup induction property, which we will call the \emph{weak subgroup induction property}. The reader should be aware that what is called the subgroup induction property in~\cite{GrigorchukLeemannNagnibeda21} corresponds in fact to the weak subgroup induction property under our terminology. However, there is currently no known example of a group with the weak subgroup induction property that does not also possess the stronger version, and it is not known if such a group can exist.
The interested reader can also have a look at~\cite{FrancoeurLeemann20} for interesting consequences of the weak subgroup induction property.

We will show in Theorem~\ref{thm:BlockSubgroups} that, for groups acting tree-primitively, the subgroup induction property implies that every finitely generated subgroup of $G$ is virtually a block subgroup. This last property implies in turn the weak subgroup induction property~\cite[Proposition 4.3]{GrigorchukLeemannNagnibeda21}.
(The article~\cite{GrigorchukLeemannNagnibeda21} claimed that the last two properties were equivalent, but only providing a sketch of a proof.)
\end{rem}

In view of the above, it is natural to ask the following:
\begin{qu}
Does there exist a group that possesses the weak subgroup induction property but not the subgroup induction property?
\end{qu}

There are only a few known examples of groups with the SIP. As far as the authors are aware, at the time of writing, the complete list of known examples consists of the  Grigorchuk group~\cite{GrigorchukWilson03} and the torsion GGS groups~\cite{Garrido16, FrancoeurLeemann20}.

Another natural generalisation of the subgroup induction property is to look at a self-replicating family of groups $(G_i)_i$ instead of a unique self-replicating group $G$. It is possible to modify Definition~\ref{defn:InductiveClass} to define an inductive family $(\C_v)_{v\in X^*}$ of classes of subgroups and to prove a result analogous to the forthcoming Proposition~\ref{prop:ExistNiceTransversal} in this generality. We hence ask
\begin{qu}
To what extent can results about self-replicating branch groups with the subgroup induction property be extended to self-replicating families of branch groups with the subgroup induction property?
\end{qu}

Let us now introduce a useful bit of terminology that will make it easier to state certain results about groups with the subgroup induction property.

\begin{defn}
Let $T\subseteq X^*$ be a finite set. We will say that $T$ is a \emph{transversal} of $X^*$ if elements of $T$ are pairwise incomparable, but for all $v\in X^*\setminus T$, there exists $t\in T$ such that $v$ and $t$ are comparable.
\end{defn}
\begin{rem}
If $T$ is a transversal of $X^*$, then $T$ must be finite.
\end{rem}
\begin{example}
For any $n\in \N$, the set $X^n$ is a transversal of $X^*$.
\end{example}

It was shown in \cite[Proposition 4.3]{GrigorchukLeemannNagnibeda21} that if $G$ has the weak subgroup induction property, then finitely generated subgroups of $G$ have ``nice sections''. A similar statement holds for the subgroup induction property.

\begin{prop}\label{prop:ExistNiceTransversal}
Let $G$ be a finitely generated self-replicating group of automorphisms of $X^*$ possessing the subgroup induction property, and let $H\leq G$ be a finitely generated subgroup of $G$. Then, there exists a transversal $T$ of $X^*$ such that for every $v\in T$, either $\varphi_v(\St_H(v))$ is finite or $\varphi_v(\St_H(v))=G$.
\end{prop}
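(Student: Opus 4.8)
The plan is to prove the statement by exhibiting an inductive collection and invoking the subgroup induction property. Concretely, let $\C$ be the collection of all subgroups $H \leq G$ such that either $H$ is not finitely generated, or $H$ is finitely generated and there exists a transversal $T$ of $X^*$ with the property that for every $v \in T$, either $\varphi_v(\St_H(v))$ is finite or $\varphi_v(\St_H(v)) = G$. The goal is to verify that $\C$ is an inductive collection in the sense of Definition~\ref{defn:InductiveClass}; then by the subgroup induction property every finitely generated subgroup of $G$ lies in $\C$, which is exactly the claim. (Including all non-finitely-generated subgroups in $\C$ is a harmless device so that the quantifiers in the definition of inductive collection only bite on the finitely generated ones; one could equally phrase things with the collection of finitely generated subgroups with the stated property together with enough padding, but it is cleanest to just throw in all non-f.g.\ subgroups.)

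First I would check condition (I): the trivial subgroup works with $T = X^0 = \{\epsilon\}$ since $\varphi_\epsilon(\St_1(\epsilon)) = 1$ is finite, and $G$ itself works with the same transversal since $\varphi_\epsilon(\St_G(\epsilon)) = G$. Next, condition~\ref{item:ConditionInductiveFiniteIndexOvergroup}: suppose $H \in \C$ is finitely generated (if $H$ is not finitely generated but $L \geq H$ has $H$ of finite index, then $L$ is finitely generated, and this case needs a separate small argument, or one simply notes a non-f.g.\ group cannot contain a f.g.\ finite-index subgroup, so this situation does not arise) and $H \leq L$ with $[L:H] < \infty$; then $L$ is finitely generated. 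Take the transversal $T$ witnessing $H \in \C$. For each $v \in T$, the subgroup $\St_H(v)$ has finite index in $\St_L(v)$, hence $\varphi_v(\St_H(v))$ has finite index in $\varphi_v(\St_L(v))$. If $\varphi_v(\St_H(v))$ is finite then so is $\varphi_v(\St_L(v))$; if $\varphi_v(\St_H(v)) = G$ then $\varphi_v(\St_L(v)) = G$ too since it is a subgroup containing $G$. So the same $T$ witnesses $L \in \C$.

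The substantive step is condition (III): assume $H \leq \St_G(X)$ with $\varphi_x(H) \in \C$ for every $x \in X$; we must show $H \in \C$. If $H$ is not finitely generated we are done by fiat; otherwise each $\varphi_x(H)$ is a finitely generated subgroup of $G$ (as the image of a f.g.\ group), so for each $x \in X$ there is a transversal $T_x$ of $X^*$ witnessing $\varphi_x(H) \in \C$. The natural candidate transversal for $H$ is $T = \bigcup_{x \in X} x T_x = \{ xw : x \in X,\ w \in T_x\}$, which is readily checked to be a transversal of $X^*$ (the sets $xX^*$ partition $X^* \setminus \{\epsilon\}$, and within each $xX^*$ the set $xT_x$ is a transversal). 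The point to verify is that for $v = xw \in T$ with $w \in T_x$, the group $\varphi_v(\St_H(v))$ is either finite or all of $G$. Here I would use the cocycle/antihomomorphism identities $\varphi_{xw} = \varphi_w \circ \varphi_x$ together with the fact that $H \leq \St_G(X)$, so that $\varphi_x(H)$ is genuinely the "section at $x$" of all of $H$ and $\varphi_x(\St_H(xw)) = \St_{\varphi_x(H)}(w)$ (an element of $H$ fixes $xw$ iff its section at $x$ fixes $w$, using that $H$ already stabilises $X$). Consequently $\varphi_{xw}(\St_H(xw)) = \varphi_w\big(\St_{\varphi_x(H)}(w)\big)$, which by the defining property of the transversal $T_x$ for $\varphi_x(H) \in \C$ is either finite or equal to $G$. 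This gives $H \in \C$ and completes the verification.

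The main obstacle I anticipate is the bookkeeping in condition (III): one must be careful that $H$ actually lies in $\St_G(X)$ (so the "section at $x$" construction is well-defined globally, not just on a stabiliser) and that the identity $\varphi_x(\St_H(xw)) = \St_{\varphi_x(H)}(w)$ holds — this uses precisely that $H \leq \St_G(X)$, since in general $\varphi_x$ is only defined as a homomorphism on $\St_G(x)$, but here $H = \St_H(x)$. Given that care, everything else is routine. A cosmetic point worth attending to: one should make sure the transversal produced is honest (finite, pairwise incomparable, and "cofinal"), but this follows immediately from $T_x$ being transversals and the disjoint decomposition $X^* = \{\epsilon\} \sqcup \bigsqcup_{x\in X} xX^*$.
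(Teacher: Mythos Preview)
Your proposal is correct and follows exactly the approach the paper sketches: show that the collection of subgroups satisfying the conclusion is inductive and invoke the SIP. The paper gives only a one-line indication of this (``by noticing that the set of finitely generated subgroups satisfying the desired property forms an inductive collection''), and your write-up supplies precisely the routine verifications the paper omits, including the harmless padding by non-finitely-generated subgroups to make the inductive-collection conditions go through cleanly.
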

\begin{proof}
Let $\C$ be the collection of all finitely generated subgroups $H$ of $G$ such that there exists a transversal $T$ of $X^*$ such that for every $v\in T$, either $\varphi_v(\St_H(v))$ is finite or $\varphi_v(\St_H(v))=G$.
We will show that $\C$ is an inductive class, and hence contains all finitely generated subgroups of $G$ by the subgroup induction property.

Both $\{1\}$ and $G$ belong to $\C$, for the transversal $T$ consisting of the root of $X^*$.

If $H\in\C$ for some transversal $T$ and $L$ contains $H$ as a finite index subgroup, then $L$ is also in $\C$ for the same transversal $T$.

Finally, let $H\leq\St_G(X)$ be finitely generated with $\varphi_x(H)\in\C$ for all $x\in X$.
Then for every $x\in X$ we have a transversal $T_x$ of the subtree $\{w\in X^* \mid w\geq x\}$ of vertices below $x$ witnessing that $\varphi_x(H)$ is in $\C$.
The (disjoint) union of all of the $T_x$ forms a transversal $T$ of $X^*$.
Now, every $v\in T$ belongs to some $T_x$ and hence we can write $v=xw$. Since $H$ stabilizes $X$, we have $\varphi_v(\St_H(v))=\varphi_w(\St_{\varphi_x(H)}(w))$, which is hence either finite or equal to~$G$.
\end{proof}

Observe that the converse of Proposition \ref{prop:ExistNiceTransversal} holds in any finitely generated branch group. Indeed, it follows from the following result.
\begin{lemma}\label{lem:NiceTransversalFG}
Let $H$ be a subgroup of a finitely generated branch group $G\leq\Aut(X^*)$. Suppose that there exists a transversal $T$ such that for every $v\in T$, the section $\varphi_v(\St_H(v))$ is either finite or of finite index in $\varphi_v(\St_G(v))$. Then $H$ is finitely generated.
\end{lemma}
\begin{proof}
Let $T=T_i\sqcup T_f$, where $T_i=\{v\in T \mid [\varphi_v(\St_G(v)):\varphi_v(\St_H(v))]<\infty\}$ and $T_f = \{v\in T \mid |\varphi_v(\St_H(v))|<\infty\}$.
Let $T_i=\{v_1,\dots, v_d\}$ and let $K=\Srist_H(T_i)$.
It follows from the definition of $T_i$ and $T_f$ that $K$ is of finite index in $H$.
Therefore, it is finitely generated if and only if $H$ is.
Moreover, if $v\in T_i$, then $\varphi_v(K)$ is of finite index in $\varphi_v(\St_G(v))$ and therefore finitely generated (and of course, if $v\in T_f$, then $\varphi_v(K)=1$).
Let $X_1$ be a finite generating set of 
$\varphi_{v_1}(K)$ and let $\tilde X_1$ be a subset of $K$ containing exactly one preimage by $\varphi_{v_1}$ of every element of $X_1$.
Then $K$ is generated by $\tilde X_1$ together with $K_1=\rist_K(T_i\setminus v_1)$.
Since the subgroup $K_1$ is normal in $K$, the section $\varphi_{v_2}(K_1)$ is an almost normal subgroup of 
$\varphi_{v_2}(\St_G(v_2))$, which is a finitely generated branch group. 
It follows from Theorem \ref{thm:AlmostNormalFinitelyGenerated} that $\varphi_{v_2}(K_1)$ is finitely generated.

That means that one can find a finite $\tilde X_2\subseteq K$ such that $K$ is generated by $\tilde X_1\cup\tilde X_2$ together with $K_2=\rist_K(T_i\setminus (v_1\cup v_2))$.
Notice that $K_2$ is again normal in $K$.
Thus, by iterating this process, we obtain that $K$ is generated by $\tilde X_1\cup\dots\cup\tilde X_{d-1}$
together with $K_d=\rist_K(v_d)\cong\varphi_{v_d}(\rist_K(v_d))$ which is itself finitely generated.
So $K$ is finitely generated as desired.
\end{proof}

\section{Block subgroups of branch groups with the SIP}\label{section:BlockStructure}

In this section, we will show that finitely generated subgroups of tree-primitive branch groups with the subgroup induction property all contain a block subgroup of finite index. The proof is rather technical, but most of the work is done in Section~\ref{subsection:DependenceFunction}, where we introduce what we call the \emph{dependence function}. This serves as a tool that allows us, in Lemma~\ref{lemma:MinimalDependenceSetForAll}, to detect which vertices are ``dependent'' on each other, in the sense that a trivial action on the subtree rooted at one vertex implies a trivial action on the subtree rooted at the other. This does not imply that dependent vertices form part of a diagonal block subgroup however, since this dependence relation is not necessarily an equivalence relation. To solve this problem, we need to show that by passing to descendants of the original vertices, one can refine this relation into an equivalence relation and thus detect the vertices supporting the diagonal block subgroups, which is what we do in Lemmas~\ref{lemma:ExistsVertexWhoseProjectionIsFiniteIndex} to~\ref{lemma:BunchOfDiagonalBlocks}. Finally, in Section~\ref{subsection:ProofOfThm}, we apply these results to prove our main theorem.

\subsection{Full supporting sets and the dependence function}\label{subsection:DependenceFunction}

To study finitely generated subgroups of groups with the subgroup induction property more easily, let us introduce a new notion which we will call a \emph{full supporting set}.

\begin{defn}
Let $G\leq \Aut(X^*)$ be a finitely generated self-replicating group of automorphisms of $X^*$, and let $H\leq G$ be a finitely generated subgroup. A subset $F\subseteq X^{*}$ is said to be a \emph{full supporting set for $H$} if it is $H$-invariant and if there exists a transversal $T$ of $X^*$ with $F\subseteq T$ such that the following two conditions are satisfied:
\begin{enumerate}[label=(\roman*)]
\item $\varphi_v(\St_H(v))=G$ for all $v\in F$,
\item $|\varphi_v(\St_H(v))|<\infty$ for all $v\in T\setminus F$.
\end{enumerate}
\end{defn}

\begin{rem}
It follows from Proposition~\ref{prop:ExistNiceTransversal} that if $G\leq \Aut(X^*)$ is a finitely generated self-replicating group with the subgroup induction property, then every finitely generated subgroup of $G$ admits a full supporting set.
Note that the full supporting set of a finitely generated subgroup $H$ is empty if and only if $H$ is finite, in which case the main results of the present article hold trivially.
\end{rem}

We will now introduce a map defined on a full supporting set, which we will call the \emph{dependence function}.

\begin{defn}
Let $G\leq \Aut(X^*)$ be a finitely generated infinite self-replicating group of automorphisms of $X^*$, let $H\leq G$ be a finitely generated subgroup, and let $F\subseteq X^*$ be a non-empty full supporting set for $H$. The \emph{dependence function} on $F$ is the map $\delta_{H,F}\colon F \rightarrow \N$ given by
\[\delta_{H,F}(v)=\min\left\{|V| \mid v\in V\subseteq F, \varphi_v(\Srist_H(V))\ne 1\right\}.\]
\end{defn}

\begin{rem}
Let $F$ be a full supporting set of a finitely generated subgroup $H\leq G$.
From the definition, there exists a transversal $T$ of $X^*$ with $F\subseteq T$ such that $\varphi_v(\St_H(v))$ is finite for all $v\in T\setminus F$.
Therefore, if we denote by $K_v=\{h\in \St_H(v)\mid \varphi_v(h)=1\}$ the kernel of the application $\varphi_v$ restricted to $\St_H(v)$, then $K_v$ is a finite index subgroup of $H$ for all $v\in T\setminus F$.
Hence, $\rist_H(F)=\bigcap_{v\in T\setminus F}K_v$ is a finite index subgroup of $H$, and thus so is $\Srist_H(F)$.
Consequently, if $F$ is non-empty, then $\varphi_v(\Srist_H(F))$ is of finite index in $\varphi_v(\St_H(v))=G$, which implies that $\varphi_v(\Srist_H(F))\ne 1$, for all $v\in F$, so that $\delta_{H,F}$ is well-defined.
\end{rem}

\begin{defn}
Let $G\leq \Aut(X^*)$ be a finitely generated infinite self-replicating group of automorphisms of $X^*$, let $H\leq G$ be a finitely generated subgroup, and let $F\subseteq X^*$ be a non-empty full supporting set for $H$. Let $v\in F$ be any element. A subset $V\subseteq F$ will be called a \emph{minimal dependence set for $v$} if $v\in V$, $\varphi_v(\Srist_H(V))\ne 1$ and $|V|=\delta_{H,F}(v)$.
\end{defn}

Minimal dependence sets will be a key tool that will allow us to detect the supporting vertex sets of diagonal block subgroups. Let us now establish a few key facts about such sets.

\begin{lemma}\label{lemma:ProjectionsInMinimalDependenceSets}
Let $G\leq \Aut(X^*)$ be a finitely generated self-replicating group, let $H\leq G$ be a finitely generated subgroup and let $F\subseteq X^*$ be a non-empty full supporting set for $H$. Let $v\in F$ be any element, and let $V\subseteq F$ be a minimal dependence set for $v$. Then,
\begin{enumerate}[label=(\roman*)]
\item for every $h\in \Srist_H(V)$ and for every $w\in V$, $\varphi_v(h)\ne 1 \Rightarrow \varphi_w(h)\ne 1$,\label{item:NonTrivialvImpliesNonTrivialw}
\item for every $w\in V$, $\varphi_w(\Srist_H(V))$ is a non-trivial almost normal subgroup of $G$.\label{item:AlmostNormalSubgroup}
\end{enumerate}
\end{lemma}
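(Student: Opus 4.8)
The plan is to prove both statements by exploiting the minimality of $V$ together with the branch structure, specifically Proposition~\ref{prop:R'inSubgroupNormalisedByR}. For part~\ref{item:NonTrivialvImpliesNonTrivialw}, suppose towards a contradiction that there is $h\in \Srist_H(V)$ and $w\in V$ with $\varphi_v(h)\ne 1$ but $\varphi_w(h)=1$. The idea is to consider the smaller set $V'=V\setminus\{w\}$ and argue that $\varphi_v(\Srist_H(V'))\ne 1$, contradicting $|V|=\delta_{H,F}(v)$. The element $h$ itself need not lie in $\Srist_H(V')$ --- it lies in $\St_H(V')$ since $V'\subseteq V$, and it lies in $\rist_H(V)$; but $h$ may move vertices incomparable to $V'$ yet comparable to $w$, so $h\notin\rist_H(V')$ in general. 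However, since $\varphi_w(h)=1$, the element $h$ acts trivially on $wX^*$, so in fact $h$ does act trivially on every vertex incomparable with $V'$: such a vertex is either comparable with some element of $V\setminus\{w\}=V'$ (impossible, as we assumed incomparability) or comparable with $w$ (on which $h$ is trivial) or incomparable with all of $V$ (on which $h$ is trivial as $h\in\rist_H(V)$). Hence $h\in\Srist_H(V')$ with $\varphi_v(h)\ne 1$, giving the contradiction. Wait --- I need to double-check the case analysis covers vertices comparable with $w$ but also comparable with some $v'\in V'$: since elements of $V$ are pairwise incomparable, a descendant of $w$ cannot be comparable with $v'\in V'$, and an ancestor of $w$ is an ancestor of... no, an ancestor of $w$ could be an ancestor of $v'$, but then $h$ fixing it is automatic only if... actually an ancestor $u$ of $w$ with $h$ fixing $w$-subtree still might move $u$ if $u$ is also an ancestor of $v'$ --- but $h\in\St_H(V)\subseteq\St_H(w)$ means $h$ fixes $w$, hence fixes every ancestor of $w$. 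So the argument goes through.

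\textbf{Part (ii), almost normality.} For part~\ref{item:AlmostNormalSubgroup}, first non-triviality: by part~\ref{item:NonTrivialvImpliesNonTrivialw} and the definition of minimal dependence set, $\varphi_v(\Srist_H(V))\ne 1$ forces $\varphi_w(h)\ne 1$ for any $h$ with $\varphi_v(h)\ne 1$, so $\varphi_w(\Srist_H(V))\ne 1$. For almost normality, the plan is to produce a finite-index subgroup of $G$ normalising $\varphi_w(\Srist_H(V))$. Since $H$ is finitely generated and $G$ has the SIP, one knows (from the full supporting set) that $\varphi_w(\St_H(w))=G$; the subgroup $\Srist_H(V)$ sits inside $\St_H(V)\leq\St_H(w)$, and one would like to compare it with $\rist_H$-type subgroups that are almost normal. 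The cleanest route: $\Srist_H(V)$ has finite index in $\Srist_H(F)$ (as noted in the remark, since $V$ is finite and... actually one needs that $\Srist_H(V)\supseteq$ some finite-index piece --- more carefully, $\rist_H(F)\cap\St_H(F)=\Srist_H(F)$ has finite index in $H$, and $\Srist_H(F)\leq\Srist_H(V)\leq\St_H(V)$), so $\varphi_w(\Srist_H(V))$ contains $\varphi_w(\Srist_H(F))$ with finite index. Now $\Srist_H(F)$ contains, by the branch property applied at level of $F$, the derived subgroup $\rist_G(m)'\cap(\text{stuff})$ for suitable $m$ --- more precisely one invokes that $\rist_H(v)$-type subgroups are almost normal. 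Actually the sharpest tool here is Theorem~\ref{thm:StructureAlmostNormalSubgroups} applied after establishing $\Srist_H(F)\trianglelefteq$-up-to-finite-index, but $\Srist_H(F)$ need not be almost normal a priori.

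\textbf{The real argument for (ii).} I think the correct approach is: the subgroup $\varphi_v(\Srist_H(V))$ is nontrivial, and we want to show $\varphi_w(\Srist_H(V))$ is almost normal in $G$. Note $\varphi_w(\St_H(w))=G$ since $w\in F$. The subgroup $\St_H(V)$ has finite index in $\St_H(w)$ (as $V$ is finite and each $\St_H(u)$ has finite index), so $\varphi_w(\St_H(V))$ has finite index in $G$. Moreover $\Srist_H(V)=\rist_H(V)\cap\St_H(V)$ is normal in $\St_H(V)$: indeed $\rist_H(V)$ is normalised by $\St_H(V)$ (conjugating by something that stabilises $V$ setwise preserves the rigid stabiliser of $V$), and $\St_H(V)$ is a group, so the intersection is normal in $\St_H(V)$. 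Therefore $\varphi_w(\Srist_H(V))$ is normalised by $\varphi_w(\St_H(V))$ --- but one must be careful, $\varphi_w$ is only a homomorphism on $\St_G(w)$, and $\St_H(V)\subseteq\St_H(w)\subseteq\St_G(w)$, so $\varphi_w$ restricted to $\St_H(V)$ is a homomorphism, hence $\varphi_w(\Srist_H(V))\trianglelefteq\varphi_w(\St_H(V))$, the latter of finite index in $G$. This shows $\varphi_w(\Srist_H(V))$ is almost normal in $G$.

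\textbf{Main obstacle.} The technically delicate point is the case analysis in part~\ref{item:NonTrivialvImpliesNonTrivialw} verifying $h\in\rist_H(V')$ --- one must carefully track which vertices $h$ fixes using both $\varphi_w(h)=1$ (trivial on $wX^*$) and $h\in\St_H(V)$ (fixes all ancestors of elements of $V$, in particular ancestors of $w$) and $h\in\rist_H(V)$ (trivial on everything incomparable with all of $V$). In part~\ref{item:AlmostNormalSubgroup} the subtle point is ensuring $\varphi_w$ is genuinely a homomorphism on the relevant subgroup, which holds because $\St_H(V)\subseteq\St_G(w)$ and $\varphi_w$ is a homomorphism on $\St_G(w)$ by the cocycle identity; and that $\St_H(V)$ has finite index in $H$ (equivalently $\St_H(w)$ has finite index in $H$, true since the $H$-orbit of $w$ is finite, $w$ lying on a fixed level). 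Everything else is bookkeeping.
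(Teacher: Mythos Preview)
Your proposal is correct and follows essentially the same route as the paper: for part~\ref{item:NonTrivialvImpliesNonTrivialw} you show (after a more detailed case analysis than the paper gives) that such an $h$ would lie in $\Srist_H(V\setminus\{w\})$, contradicting minimality; for part~\ref{item:AlmostNormalSubgroup} your ``real argument''---$\Srist_H(V)\trianglelefteq \St_H(V)$, the latter of finite index in $\St_H(w)$, and $\varphi_w(\St_H(w))=G$ by the full supporting set hypothesis---is exactly the paper's proof. Your references to the SIP and to Proposition~\ref{prop:R'inSubgroupNormalisedByR} are unnecessary (neither is used), and the exploratory detour via $\Srist_H(F)$ can be dropped.
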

\begin{proof}
\ref{item:NonTrivialvImpliesNonTrivialw} If there existed $h\in \Srist_H(V)$ and $w\in V$ such that $\varphi_v(h)\ne 1$ but $\varphi_w(h)=1$, then we would have $h\in \Srist_H(V\setminus \{w\})$ and thus $\varphi_v(\Srist_H(V\setminus\{w\}))\ne 1$. As $|V\setminus \{w\}|<|V|$, this would contradict the fact that $V$ is a minimal dependence set for $v$.

\ref{item:AlmostNormalSubgroup} Let $w\in V$ be any element. It follows from~\ref{item:NonTrivialvImpliesNonTrivialw} that $\varphi_w(\Srist_H(V))\ne 1$. Furthermore, as $\Srist_H(V)$ is normal in $\St_H(V)$, which is a finite index subgroup of $\St_H(w)$, we find by applying $\varphi_w$ that $\varphi_w(\Srist_H(V))$ must be normal in $\varphi_w(\St_H(V))$, which is of finite index in $\varphi_w(\St_H(w))=G$. Thus, $\varphi_w(\Srist_H(V))$ is a non-trivial almost normal subgroup of $G$.
\end{proof}

\begin{lemma}\label{lemma:DependenceSetsCommuteOrContained}
Let $G\leq \Aut(X^*)$ be a finitely generated self-replicating group, let $H\leq G$ be a finitely generated subgroup and let $F\subseteq X^*$ be a non-empty full supporting set for $H$. Let $v\in F$ be any element, let $V\subseteq F$ be a minimal dependence set for $v$ and let $W\subseteq F$ be another subset of $F$ containing $v$ and such that $\varphi_v(\Srist_H(W))\ne 1$. Then, either $V\subseteq W$ or $\varphi_v(\Srist_H(V))$ and $\varphi_v(\Srist_H(W))$ commute.
\end{lemma}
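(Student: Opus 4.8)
The statement is a dichotomy: given a minimal dependence set $V$ for $v$ and any other set $W \ni v$ with $\varphi_v(\Srist_H(W)) \ne 1$, either $V \subseteq W$ or the two subgroups $\varphi_v(\Srist_H(V))$ and $\varphi_v(\Srist_H(W))$ commute. The natural strategy is to consider the intersection $U = V \cap W$ and argue by cases on whether $U = V$. If $U = V$ then $V \subseteq W$ and we are done. So I would assume $U \subsetneq V$, i.e. $V \not\subseteq W$, and prove that the two projected subgroups commute.

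To do this, first observe that since $V$ is a minimal dependence set for $v$ and $U \subsetneq V$ with $v \in U$, minimality forces $\varphi_v(\Srist_H(U)) = 1$ — wait, not quite: minimality only says no subset of size $< |V|$ containing $v$ has nontrivial projection, so indeed $\varphi_v(\Srist_H(U)) = 1$ since $|U| < |V|$. Now the key point: I want to produce, for $h \in \Srist_H(V)$ and $k \in \Srist_H(W)$, a commutation $[\varphi_v(h), \varphi_v(k)] = 1$. Since $h$ acts trivially off the subtrees below vertices of $V$, and $k$ acts trivially off the subtrees below vertices of $W$, their "overlap" is only below $U = V \cap W$. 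So $[h,k]$ is supported below $U$: its projection $\varphi_u$ vanishes for $u \notin U$ among the relevant transversal vertices, hence $[h,k] \in \Srist_H(U)$ (after checking it also stabilizes $U$, which follows since $h, k \in \St_H(V), \St_H(W)$ respectively). Actually I should be careful — I want $[h,k] \in \Srist_H(U)$, using that $h$ and $k$ both lie in $\St_H(U)$ and act trivially outside their respective supports whose intersection of "active regions" is below $U$.

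The crucial consequence is then: $\varphi_u([h,k])$ could be nonzero, but I want to control the projection at $v$. Here is where minimality really bites. Since $[h,k] \in \Srist_H(U)$ and $v \in U$ with $|U| < \delta_{H,F}(v)$, we get $\varphi_v([h,k]) = 1$. But $\varphi_v([h,k]) = [\varphi_v(h), \varphi_v(k)]$ because $\varphi_v$ is a group homomorphism on $\St_H(v)$ (and both $h$ and $k$ stabilize $v$, being in $\St_H(V)$ and $\St_H(W)$ respectively with $v \in V \cap W$). Therefore $[\varphi_v(h), \varphi_v(k)] = 1$ for all $h \in \Srist_H(V)$, $k \in \Srist_H(W)$, which is exactly the assertion that $\varphi_v(\Srist_H(V))$ and $\varphi_v(\Srist_H(W))$ commute.

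**Main obstacle.** The delicate step is verifying that $[h,k] \in \Srist_H(U) = \rist_H(U) \cap \St_H(U)$, in particular that $[h,k] \in \rist_H(U)$, i.e. that $[h,k]$ fixes every word not comparable to an element of $U$. The point is that $h \in \Srist_H(V)$ fixes everything not comparable to $V$, and $k$ fixes everything not comparable to $W$; a word comparable to $V$ but not to $W$ is fixed by $k$, and one comparable to $W$ but not $V$ is fixed by $h$, so on such words $[h,k]$ acts trivially — but one has to be slightly careful about words that lie strictly above elements of $V$ or $W$ and handle the interaction with the ambient transversal $T$. I expect this bookkeeping, together with confirming $[h,k]$ genuinely stabilizes each $u \in U$ pointwise (not just setwise) so that it lies in $\Srist_H(U)$ rather than merely $\rist_H(U)$, to be where the real care is needed; everything else is formal.
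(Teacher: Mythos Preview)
Your approach is correct and matches the paper's: both rest on the inclusion $[\Srist_H(V),\Srist_H(W)]\leq \Srist_H(V\cap W)$ and then invoke minimality of $V$ (the paper argues contrapositively, you directly). The inclusion you flag as the main obstacle is in fact routine---since $V\cup W$ lies in a transversal, rigid stabilisers of distinct vertices commute---and the paper states it without comment.
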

\begin{proof}
Suppose that $\varphi_v(\Srist_H(V))$ and $\varphi_v(\Srist_H(W))$ do not commute.  Since
\[[\Srist_H(V), \Srist_H(W)] \leq \Srist_H(V\cap W),\]
this implies that $\varphi_v(\Srist_H(V\cap W))\ne 1$. We obviously have $|V\cap W|\leq |V|$, but by the minimality of $V$, we must also have $|V|\leq |V\cap W|$, from which we conclude that $V=V\cap W$ and thus that $V\subseteq W$.
\end{proof}

As the next lemma establishes, in the case of branch groups, the value of the dependence function is constant on minimal dependence sets.

\begin{lemma}\label{lemma:MinimalDependenceSetForAll}
Let $G\leq \Aut(X^*)$ be a finitely generated self-replicating just infinite branch group, let $H\leq G$ be a finitely generated subgroup and let $F\subseteq X^*$ be a non-empty full supporting set for $H$. Let $v\in F$ be any element, and let $V\subseteq F$ be a minimal dependence set for $v$. Then, $V$ is a minimal dependence set for all $w\in V$. In particular, $\delta_{H,F}(w)=\delta_{H,F}(v)$ for all $w\in V$.
\end{lemma}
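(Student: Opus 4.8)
The goal is to show that a minimal dependence set $V$ for one vertex $v \in V$ is automatically a minimal dependence set for every other $w \in V$. Fix $w \in V$. The plan is to argue in two steps: first that $V$ is *a* dependence set for $w$ (i.e.\ $\varphi_w(\Srist_H(V)) \ne 1$, which is immediate from Lemma~\ref{lemma:ProjectionsInMinimalDependenceSets}\ref{item:NonTrivialvImpliesNonTrivialw}), and second, the real content, that $|V| = \delta_{H,F}(w)$, i.e.\ no strictly smaller subset $W \subseteq F$ containing $w$ has $\varphi_w(\Srist_H(W)) \ne 1$. Suppose for contradiction that such a $W$ exists with $|W| < |V|$; by shrinking, we may take $W$ to be a minimal dependence set for $w$, so $|W| = \delta_{H,F}(w) < |V| = \delta_{H,F}(v)$.

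The strategy is to play the two minimal dependence sets $V$ (for $v$) and $W$ (for $w$) against each other using Lemma~\ref{lemma:DependenceSetsCommuteOrContained} and the just-infiniteness hypothesis. Apply Lemma~\ref{lemma:DependenceSetsCommuteOrContained} with the roles: $W$ is a minimal dependence set for $w$, and $V$ is a subset of $F$ containing $w$ with $\varphi_w(\Srist_H(V)) \ne 1$. The lemma gives that either $W \subseteq V$, or $\varphi_w(\Srist_H(W))$ and $\varphi_w(\Srist_H(V))$ commute. In the first case $W \subseteq V$ with $|W| < |V|$ and $\varphi_w(\Srist_H(W)) \ne 1$; but then $\varphi_w(\Srist_H(W)) \ne 1$ already shows $\delta_{H,F}(w) \le |W| < |V|$ — this alone is not yet a contradiction, so we must transport the information back to $v$. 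Here is where I would use that $\Srist_H(W) \cap \Srist_H(V \setminus W)$-type decompositions interact with the almost-normal structure. Concretely: by Lemma~\ref{lemma:ProjectionsInMinimalDependenceSets}\ref{item:AlmostNormalSubgroup}, $A := \varphi_w(\Srist_H(V))$ and $B := \varphi_w(\Srist_H(W))$ are non-trivial almost normal subgroups of $G$, with $A \le B$ (as $W \subseteq V$ forces $\Srist_H(V) \le \Srist_H(W)$), so $A$ is a non-trivial almost normal subgroup of the just-infinite branch group $G$; but then (Corollary~\ref{cor:CenterAlmostNormalIsTrivial}) $A$ is infinite. The point I want to extract is that $\Srist_H(V)$ and $\Srist_H(W)$ project to subgroups related by finite index once restricted appropriately, and that lets me conclude $\varphi_v(\Srist_H(W)) \ne 1$ as well — contradicting minimality of $V$ for $v$ since $v \in W$ would need $|W| \ge \delta_{H,F}(v) = |V|$. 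The subtlety is whether $v \in W$: it need not be! So in the case $W \subseteq V$ I should instead directly conclude, since $W$ is then a dependence set for $w$ of size $< |V|$ contained in $V$, and I want to derive that $\varphi_v(\Srist_H(W)) \ne 1$; this follows if $v \in W$, and if $v \notin W$ I use Lemma~\ref{lemma:ProjectionsInMinimalDependenceSets}\ref{item:NonTrivialvImpliesNonTrivialw} in reverse — actually that lemma is stated for $V$, so I would need the analogue: for $h \in \Srist_H(W)$ and the minimal dependence set structure, non-triviality at $w$ propagates. The cleanest route: replace $V$ by a minimal dependence set $V'$ for $w$ contained in $V$ of size $\delta_{H,F}(w)$; then $V' \subsetneq V$ and $\varphi_v(\Srist_H(V')) $ — if nonzero contradicts minimality of $V$ for $v$, and if zero then $V' \not\ni v$ makes sense but we can argue $v$ is "independent" which contradicts $\varphi_v(\Srist_H(V)) \ne 1$ via a decomposition of $\Srist_H(V)$ modulo $\Srist_H(V')$.

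In the second case of Lemma~\ref{lemma:DependenceSetsCommuteOrContained}, $\varphi_w(\Srist_H(W))$ and $\varphi_w(\Srist_H(V))$ commute; both are non-trivial almost normal subgroups of $G$ by Lemma~\ref{lemma:ProjectionsInMinimalDependenceSets}\ref{item:AlmostNormalSubgroup}. By Proposition~\ref{prop:CommutingAlmostNormalLiesInComplement}, each lies, up to finite index, in a complement of the other. But $\varphi_w(\Srist_H(V \cap W))$ is contained in both, and since $[\Srist_H(V),\Srist_H(W)] \le \Srist_H(V\cap W)$, the fact that the projections commute is consistent with $\varphi_w(\Srist_H(V\cap W))$ possibly being trivial — and in fact the commuting-complements picture forces $\varphi_w(\Srist_H(V)) \cap \varphi_w(\Srist_H(W))$ to be finite, hence trivial arguments about which vertices actually "see" non-trivial action apply. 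I would combine this with the observation that $\Srist_H(V) \cdot \Srist_H(W) \supseteq \Srist_H(V) \cdot \Srist_H(W \setminus V)$ decomposing over disjoint vertex supports to contradict $|W| < |V|$ together with minimality. I expect the main obstacle to be precisely this bookkeeping: keeping straight which vertex each projection is taken at, and correctly invoking just-infiniteness (via Corollary~\ref{cor:CenterAlmostNormalIsTrivial} and Proposition~\ref{prop:CommutingAlmostNormalLiesInComplement}) to rule out the "commuting" case, so that one is always pushed back into the "$V' \subsetneq V$ is a dependence set, contradicting minimality of $V$ for $v$" situation. The last sentence, $\delta_{H,F}(w) = \delta_{H,F}(v)$, is then immediate since $|V|$ simultaneously realizes the minimum for both $v$ and $w$.
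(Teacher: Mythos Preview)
Your plan has a genuine gap: the single application of Lemma~\ref{lemma:DependenceSetsCommuteOrContained} to the pair $(W,V)$ at $w$ does not lead to a contradiction in either branch. In the branch $W\subseteq V$, you correctly note that $v$ need not lie in $W$; when $v\notin W$ one has $\varphi_v(\Srist_H(W))=1$ trivially (every element of $\Srist_H(W)$ acts trivially outside~$W$), so there is nothing to contradict, and the vague ``decomposition of $\Srist_H(V)$ modulo $\Srist_H(V')$'' you allude to does not exist without further input. In the commuting branch, two non-trivial almost normal subgroups of $G$ can perfectly well commute (this is precisely what happens with rigid stabilisers of incomparable vertices), so Proposition~\ref{prop:CommutingAlmostNormalLiesInComplement} alone yields no contradiction either; it only tells you their intersection is trivial.

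The missing idea, which the paper's proof supplies, is to use not one $W$ but \emph{all} the conjugates $h\cdot W$ for $h\in\St_H(w)$. Each $h\cdot W$ is again a minimal dependence set for $w$, and the subgroups $L_h=\varphi_w(\Srist_H(h\cdot W))$ are pairwise either equal or commuting; since $\varphi_w(\St_H(w))=G$, finitely many of them generate a direct product of finite index in $G$. Setting $K=\varphi_w(\Srist_H(V))$, Proposition~\ref{prop:CommutingAlmostNormalLiesInComplement} then shows that $K$ lies, up to finite index, in the product of those $L_{h_i}$ that do \emph{not} commute with $K$; by Lemma~\ref{lemma:DependenceSetsCommuteOrContained} each such $h_i\cdot W$ is contained in $V$, and by minimality of $V$ for $v$ none of them contains $v$. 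This lets one manufacture, for a suitable $h\in\Srist_H(V)$ with $\varphi_v(h)\ne 1$, an element $g$ supported on $\bigcup_i h_i\cdot W\subseteq V\setminus\{v\}$ with $\varphi_w(g)=\varphi_w(h)$; then $hg^{-1}\in\Srist_H(V\setminus\{w\})$ with $\varphi_v(hg^{-1})\ne 1$, contradicting minimality of $V$ for $v$. Your plan never reaches this construction because it only ever considers a single translate of $W$.
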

\begin{proof}
Let $w\in V$ be any element. We may assume that $w\ne v$, since there is nothing to prove in the case where $w=v$. It follows from Lemma~\ref{lemma:ProjectionsInMinimalDependenceSets} that $\varphi_w(\Srist_H(V))\ne 1$. Thus, to prove the result, we only need to show that $\delta_{H,F}(w)=\delta_{H,F}(v)$. Clearly, we have $\delta_{H,F}(w)\leq \delta_{H,F}(v)$. Let us suppose for the sake of contradiction that $\delta_{H,F}(w)<\delta_{H,F}(v)$, and let $W$ be a minimal dependence set for $w$.

For every $h\in \St_H(w)$, let us consider the set $h\cdot W$. By construction, we have $w\in h\cdot W$,
\[\varphi_w(\Srist_H(h\cdot W)) = \varphi_w(h)\varphi_w(\Srist_H(W))\varphi_w(h)^{-1}\ne 1\]
and $|h\cdot W| = |W| = \delta_{H,F}(w)$. In other words, for every $h\in \St_H(w)$, the set $h\cdot W$ is also a minimal dependence set for $w$.

For each $h\in \St_H(w)$, let us write $L_h=\varphi_w(\Srist_H(h\cdot W))$. By Lemma~\ref{lemma:ProjectionsInMinimalDependenceSets}~\ref{item:AlmostNormalSubgroup}, for every $h\in \St_H(w)$, the subgroup $L_h$ is almost normal in $G$. Now, if $h_1,h_2\in \St_H(w)$ are such that $[L_{h_1}, L_{h_2}]\ne 1$, then it follows from Lemma~\ref{lemma:DependenceSetsCommuteOrContained} that we have $h_1\cdot W = h_2\cdot W$. Consequently, for every $h_1,h_2\in \St_H(w)$, we either have $L_{h_1}=L_{h_2}$ or $[L_{h_1}, L_{h_2}]=1$.
As all the $L_h$ are conjugate in $G=\varphi_w(\St_H(w))$, and since there are only finitely many different sets of the form $h\cdot W$ with $h\in \St_H(w)$, there exist $h_1,\dots, h_k\in \St_H(w)$ such that $\{L_{h_1},\cdots, L_{h_k}\}$ form a conjugacy class in $G$.
Furthermore, since the $L_{h_i}$ are all almost normal in $G$ and pairwise commute, it follows from Corollary \ref{cor:CenterAlmostNormalIsTrivial} that
\[L_{h_1}\cdots L_{h_k} \cong \prod_{i=1}^{k}L_{h_i}.\]
Note that the product $L_{h_1}\cdots L_{h_k}$ is normal in $G$, and therefore of finite index, since $G$ is just infinite.

Let us write $K=\varphi_w(\Srist_H(V))$. Up to reordering the $h_i$, we can suppose that there exists $t\leq k$ such that $[L_{h_i}, K]\ne 1$ for all $1\leq i \leq t$ and $[L_{h_i}, K]= 1$ for all $t<i\leq k$. Notice that since $K\ne 1$ and is almost normal in $G$, it is infinite and has trivial center by Corollary~\ref{cor:CenterAlmostNormalIsTrivial}. It follows that $K$ cannot commute with all of the $L_{h_i}$, otherwise $K\cap L_{h_1}\cdots L_{h_k}$ would be in the center of $K$ and thus trivial, which is impossible since $K\cap L_{h_1}\cdots L_{h_k}$ is of finite index in the infinite group $K$, as $L_{h_1}\cdots L_{h_k}$ is of finite index in $G$.
Let us write $L_{\leq t} = L_{h_1}\cdots L_{h_t}$ and $L_{>t}=L_{h_{t+1}}\cdots L_{h_k}$. Then, both of these subgroups are almost normal in $G$, and are complement to each other since we have $L_{\leq t}L_{>t} \cong L_{\leq t}\times L_{>t}$. Since $[K, L_{>t}]=1$ by construction, we have by Proposition~\ref{prop:CommutingAlmostNormalLiesInComplement} that $K\cap L_{\leq t}$ is of finite index in $K$.

One can define a homomorphism
\begin{align*}
\alpha\colon K=\varphi_w(\Srist_H(V))&\rightarrow \varphi_v(\Srist_H(V))\\
\varphi_w(h) &\mapsto \varphi_v(h).
\end{align*}
This is well-defined, because if $h_1, h_2\in \Srist_H(V)$ are such that $\varphi_w(h_1)=\varphi_w(h_2)$, then $\varphi_w(h_1h_2^{-1})=1$, which implies by Lemma~\ref{lemma:ProjectionsInMinimalDependenceSets}~\ref{item:NonTrivialvImpliesNonTrivialw} that $\varphi_v(h_1h_2^{-1})=1$ and thus that $\varphi_v(h_1)=\varphi_v(h_2)$. Furthermore, it is clear from its definition that this homomorphism is surjective. Consequently, $\alpha(K\cap L_{\leq t})$ is a subgroup of finite index of $\varphi_v(\Srist_H(V))$, and therefore is non-trivial, since $\varphi_v(\Srist_H(V))$ must be infinite, as follows from Lemma~\ref{lemma:ProjectionsInMinimalDependenceSets}~\ref{item:AlmostNormalSubgroup} and Corollary~\ref{cor:CenterAlmostNormalIsTrivial}.

Since $\alpha(K\cap L_{\leq t})\ne 1$, there must exist $h\in \Srist_H(V)$ such that $1\ne \varphi_v(h)\in \alpha(K\cap L_{\leq t})$ and $\varphi_w(h)\in L_{\leq t}$. Now, by the definition of $L_{\leq t}$, there must exist elements $g_i\in \Srist_H(h_i\cdot W)$, $1\leq i \leq t$, such that $\varphi_w(g)=\varphi_w(h)$, where $g=g_1\cdots g_t$. Since each $g_i\in \Srist_H(h_i\cdot W)$, we must have $g\in \Srist_H\left(\bigcup_{i=1}^{t}h_i\cdot W\right)$. Remember that by definition, for every $1\leq i \leq t$, the subgroups $\varphi_w(\Srist_H(h_i\cdot W))$ and $\varphi_w(\Srist_H(V))$ do not commute. Therefore, it follows from Lemma~\ref{lemma:DependenceSetsCommuteOrContained} that $h_i\cdot W \subseteq V$ for all $1\leq i \leq t$ and thus that $\bigcup_{i=1}^{t}h_i\cdot W \subseteq V$. Furthermore, none of the $h_i\cdot W$ contain $v$, otherwise we would get a contradiction to the minimality of $V$ as a dependence subset of $v$, since $|h_i\cdot W|=|W|<|V|$. Putting these facts together, we find that $g\in \Srist_H\left(\bigcup_{i=1}^{t}h_i\cdot W\right)\leq \Srist_H(V)$, with $\varphi_v(g)=1$ and $\varphi_w(g)=\varphi_w(h)$. Consequently, we have $hg^{-1}\in \Srist_H(V)$ with $\varphi_v(hg^{-1})=\varphi_v(h)\ne 1$ and $\varphi_w(hg^{-1})=\varphi_w(h)\varphi_w(h)^{-1}=1$. In other words, we have an element $hg^{-1}\in \Srist_H(V\setminus\{w\})$ such that $\varphi_v(hg^{-1})\ne 1$, which contradicts the fact that $V$ is a minimal dependence set for $v$. Having reached a contradiction, we conclude that our initial assumption that $\delta_{H,F}(w)<\delta_{H,F}(v)$ was wrong and therefore that $\delta_{H,F}(w)=\delta_{H,F}(v)$.
\end{proof}

As it turns out, for branch groups with tree-primitive actions, one can associate a diagonal block subgroup to every minimal dependence set, as we will show in the next couple of lemmas. Note however that the supporting set of this diagonal block need not be the same as the minimal dependence set.

\begin{lemma}\label{lemma:ExistsVertexWhoseProjectionIsFiniteIndex}
Let $G\leq \Aut(X^*)$ be a finitely generated self-replicating just infinite branch group acting tree-primitively on $X^*$, let $H\leq G$ be a finitely generated subgroup and let $F\subseteq X^*$ be a non-empty full supporting set for $H$. Let $v\in F$ be any element, and let $V=\{v_1,\dots, v_k\}\subseteq F$ be a minimal dependence set for $v$. Then, there exists a unique $w\in X^*$ such that $\varphi_v(\Srist_H(V))\leq \rist_G(w)$ and $\varphi_{vw}(\Srist_H(V))$ is a finite index subgroup of $G$.
\end{lemma}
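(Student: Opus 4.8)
The plan is to locate the vertex $w$ by examining how the almost normal subgroup $K := \varphi_v(\Srist_H(V))$ sits inside $G$, using the structure theorem for almost normal subgroups of branch groups (Theorem~\ref{thm:StructureAlmostNormalSubgroups}) together with tree-primitivity. By Lemma~\ref{lemma:ProjectionsInMinimalDependenceSets}~\ref{item:AlmostNormalSubgroup}, $K$ is a non-trivial almost normal subgroup of $G$. Applying Theorem~\ref{thm:StructureAlmostNormalSubgroups} to $K$, there are $m \geq n$ and a non-empty $U \subseteq X^n$ with $K \leq \rist_G(U)$ and $\rist_G(m)' \cap \rist_G(U) \leq K$. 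The first task is to show that $U$ can be taken to be a single vertex, say $\{u_0\} = U$; equivalently, that the set of vertices on which $K$ acts non-trivially (more precisely, the support of $K$ inside the tree) has a unique minimal element. This is exactly where tree-primitivity enters: the collection of subsets $\{vX^{\infty} : K \not\leq \St_G(v)\}$ (or its complement) should be forced by the $G$-invariance coming from almost normality, combined with tree-primitivity, to be of the form $\{$descendants of a single vertex$\}$. Concretely, since $K$ is normalised by a finite index subgroup of $G$, and since a finite index subgroup of a branch group acts spherically transitively below some level (Proposition~\ref{prop:FiniteIndexSubgroupsSphericallyTransitiveAction}), the support of $K$ below that level is a union of complete subtrees whose roots form a set invariant under a finite index subgroup; tree-primitivity then forces this set to be $vX^{m}$ for a single level, i.e.\ the support has a unique minimal vertex $w$.

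Once $w$ is identified as the unique minimal vertex of the support of $K$, I would set $L := \varphi_{vw}(\Srist_H(V)) = \varphi_w(K)$ and argue that $L$ is of finite index in $G$. Since $K \leq \rist_G(w)$ and $\varphi_w$ is injective on $\rist_G(w)$, the group $L$ is again a non-trivial almost normal subgroup of $G$: it is normal in $\varphi_w(N \cap \rist_G(w))$ for a suitable finite index normal $N \trianglelefteq G$ normalising $K$, and $\varphi_w(\rist_G(w)) $ has finite index in $\varphi_w(\St_G(w)) = G$ by self-replicatingness. Moreover $L$ now has \emph{full support}, i.e.\ $L$ acts non-trivially on $xX^{\infty}$ for every $x \in X$ (this is precisely what minimality of $w$ gives us, after applying $\varphi_w$). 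Feeding $L$ back into Theorem~\ref{thm:StructureAlmostNormalSubgroups}, the set $U$ attached to $L$ must be all of $X^{n'}$ for some $n'$, which forces $\rist_G(n')' \leq L$; since $\rist_G(n')'$ is normal of infinite index... rather, since $G$ is just infinite and $\rist_G(n')'$ is a non-trivial normal subgroup, $\rist_G(n')'$ has finite index in $G$, hence so does $L$. That gives the existence half.

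For uniqueness of $w$: if $\varphi_v(\Srist_H(V)) \leq \rist_G(w_1) \cap \rist_G(w_2)$ with both $\varphi_{vw_i}(\Srist_H(V))$ of finite index in $G$, then $w_1$ and $w_2$ must be comparable (two rigid stabilisers of incomparable vertices intersect trivially by Remark~\ref{rem:RistsCommute}, contradicting non-triviality of $K$), say $w_1 \leq w_2$; but then $\varphi_{vw_2}(\Srist_H(V)) = \varphi_{w_1^{-1}w_2}(\varphi_{vw_1}(\Srist_H(V)))$ lies inside $\rist_{\varphi_{vw_1}(\Srist_H(V))}(w_1^{-1}w_2)$, which has infinite index in $\varphi_{vw_1}(\Srist_H(V))$ unless $w_1 = w_2$ — here using that $\varphi_{vw_1}(\Srist_H(V))$ is of finite index in $G$, hence infinite and branch-like, so its rigid stabiliser of a proper descendant level is of infinite index. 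This pins down $w$ uniquely.

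I expect the main obstacle to be the first step: rigorously extracting, from the almost-normal structure theorem plus tree-primitivity, that the support of $K = \varphi_v(\Srist_H(V))$ has a \emph{unique} minimal vertex rather than merely being contained in $\rist_G(U)$ for some set $U$. The subtlety is that Theorem~\ref{thm:StructureAlmostNormalSubgroups} gives an upper bound ($K \leq \rist_G(V)$) and a lower bound ($\rist_G(m)'\cap \rist_G(V) \leq K$) but these sandwich $K$ between groups supported on possibly different-looking sets; one needs to check that the ``true'' support is a $G$-finite-index-invariant family of complete subtrees and invoke tree-primitivity at the right level to collapse $U$ (or the relevant set of subtree-roots) to a single vertex. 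Getting the interplay between the level $n$ of $U$ and the level $m$ of the commutator term right, and making sure tree-primitivity is applied to an honest $G$-invariant (not just finite-index-invariant) partition — possibly after first passing to a deeper level where the finite index subgroup acts spherically transitively — is the delicate part.
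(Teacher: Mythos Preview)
Your outline correctly identifies almost-normality of $K := \varphi_v(\Srist_H(V))$, Theorem~\ref{thm:StructureAlmostNormalSubgroups} and tree-primitivity as the main ingredients, and your uniqueness argument is essentially the paper's. But the first step --- collapsing the set $U$ from Theorem~\ref{thm:StructureAlmostNormalSubgroups} to a single subtree --- has a genuine gap, exactly the one you flag in your last paragraph without resolving.

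Tree-primitivity concerns $G$-invariant partitions of $X^n$, not partitions invariant merely under a finite-index subgroup, and almost-normality alone cannot bridge this: there exist almost normal subgroups (for instance $\rist_G(u_1)\times\rist_G(u_2)$ for incomparable $u_1,u_2$ on the same level) whose support is genuinely spread over several subtrees. Passing to a deeper level via Proposition~\ref{prop:FiniteIndexSubgroupsSphericallyTransitiveAction} does not help, since you still only obtain invariance under the normaliser, not under $G$. The missing ingredient is Lemma~\ref{lemma:DependenceSetsCommuteOrContained}: applied with $W = h\cdot V$ for $h\in\St_H(v)$, and using that $\varphi_v(\St_H(v))=G$ since $v\in F$, it shows that any two $G$-conjugates of $K$ either \emph{coincide} or \emph{commute}. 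Combined with the upper and lower bounds from Theorem~\ref{thm:StructureAlmostNormalSubgroups} and the triviality of the centre (Corollary~\ref{cor:CenterAlmostNormalIsTrivial}), this forces the $G$-translates of the support set $W\subseteq X^n$ to be pairwise equal or disjoint, hence to form an honest $G$-invariant partition of $X^n$; tree-primitivity then yields $W=wX^{n-|w|}$ for a single $w$. Once this is in hand, finite index of $\varphi_{vw}(\Srist_H(V))$ follows directly by applying $\varphi_w$ to the inclusion $\rist_G(m)'\cap\rist_G(w)\leq K$ and invoking just-infiniteness --- no second pass through Theorem~\ref{thm:StructureAlmostNormalSubgroups} is needed.
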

\begin{proof}
By Lemma~\ref{lemma:ProjectionsInMinimalDependenceSets}, $\varphi_v(\Srist_H(V))$ is a non-trivial almost normal subgroup of~$G$, and it follows from Lemma~\ref{lemma:DependenceSetsCommuteOrContained} and the fact that $\varphi_v(\St_H(v))=G$ that the conjugates of $\varphi_v(\Srist_H(V))$ by any element of $G$ are either equal to $\varphi_v(\Srist_H(V))$ or commute with it.

By Theorem~\ref{thm:StructureAlmostNormalSubgroups}, there exist $n\in \N$ and some $W\subseteq X^n$ such that $\rist_G(n)'\cap \rist_G(W)\leq \varphi_v(\Srist_H(V)) \leq \rist_G(W)$.
Since $g\rist_G(W)g^{-1} = \rist_G(g\cdot W)$, we have $\rist_G(n)'\cap \rist_G(g\cdot W)\leq g\varphi_v(\Srist_H(V))g^{-1} \leq \rist_G(g\cdot W)$ for all $g\in G$.
If $g\varphi_v(\Srist_H(V))g^{-1}=\varphi_v(\Srist_H(V))$, then we have $\rist_G(n)'\cap \rist_G(g\cdot W) \leq \rist_G(W)$ and $\rist_G(n)'\cap \rist_G(W) \leq \rist_G(g\cdot W)$, which implies that $g\cdot W = W$.
On the other hand, if $g\varphi_v(\Srist_H(V))g^{-1}\ne\varphi_v(\Srist_H(V))$, then they must commute, and since we have
\[\rist_G(n)'\cap \rist_G(g\cdot W \cap W)\leq g\varphi_v(\Srist_H(V))g^{-1}\cap\varphi_v(\Srist_H(V)),\]
we conclude from the fact that $\rist_G(g\cdot W \cap W)$ is almost normal in $G$ and Corollary~\ref{cor:CenterAlmostNormalIsTrivial} that $g\cdot W \cap W=\emptyset$.
Therefore, for any $g\in G$, either $g\cdot W = W$ or $g\cdot W \cap W = \emptyset$. It follows that there must exist $g_1,\dots, g_l\in G$ such that for every $g\in G$, there exist some $i\in \{1,\dots, l\}$ such that $g\cdot W = g_i\cdot W$ and that $g_i\cdot W \cap g_j \cdot W = \emptyset$ whenever $i\ne j$. Using additionally the fact that $G$ acts transitively on $X^n$, we find that the collection $\{g_i\cdot W\}_{i=1}^{l}$ must form a $G$-invariant partition of $X^n$. Since the action of $G$ on $X^*$ is tree-primitive, there must exist some $w\in X^*$ such that $|w|\leq n$ and $W=wX^{n-|w|}$.

It follows that we have $\varphi_v(\Srist_H(V))\leq \rist_G(w)$ and $\rist_G(n)'\cap \rist_G(w) = \rist_G(n)'\cap \rist_G(W) \leq \varphi_v(\Srist_H(V))$. Consequently,
\begin{align*}
\varphi_{vw}(\Srist_H(V)) &= \varphi_w(\varphi_v(\Srist_H(V)))\\
	&\geq \varphi_w(\rist_G(n)'\cap \rist_G(w)) = \varphi_w(\rist_G(n)').
\end{align*}
Since $\rist_G(n)'$ is a non-trivial normal subgroup of the just infinite group $G$, it is of finite index in $G$.
It follows that $\rist_G(n)'$ is of finite index in $\St_G(w)$, and thus that $\varphi_w(\rist_G(n)')$ is of finite index in $\varphi_w(\St_G(w))=G$, which implies that $\varphi_{vw}(\Srist_H(v))$ is a finite index subgroup of $G$.

To prove that $w$ is unique, let $w'\in X^*$ be such that $\varphi_v(\Srist_H(V))\leq \rist_G(w')$. Then, $w'$ must be comparable to $w$. If $w'<w$, then we can write $w=w'u$ with $u$ different from the empty word. In this case, we have $\varphi_{vw'}(\Srist_H(V))\leq \varphi_{w'}(\rist_G(w)) \leq \rist_G(u)$, which is not of finite index in $G$. If $w<w'$, then a symmetric argument shows that $\varphi_{vw}(\Srist_H(V))$ cannot be of finite index in $G$. Thus, $w$ must be unique.
\end{proof}

\begin{lemma}\label{lemma:DiagonalSubgroupFromMinimalDependenceSet}
Let $G\leq \Aut(X^*)$ be a finitely generated self-replicating just infinite branch group acting tree-primitively on $X^*$, let $H\leq G$ be a finitely generated subgroup and let $F\subseteq X^*$ be a non-empty full supporting set for $H$. Let $V=\{v_1,\dots, v_k\}\subseteq F$ be a minimal dependence set. Then, there exist unique elements $w_i\in v_iX^*$ for all $1\leq i \leq k$ such that $\Srist_H(V)$ is a diagonal block subgroup of $G$ with supporting vertex set $W=\{w_1,\dots, w_k\}$.
\end{lemma}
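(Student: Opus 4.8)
The plan is to apply Lemma~\ref{lemma:ExistsVertexWhoseProjectionIsFiniteIndex} separately at each vertex of the minimal dependence set $V=\{v_1,\dots,v_k\}$. By Lemma~\ref{lemma:MinimalDependenceSetForAll}, $V$ is a minimal dependence set for every $v_i\in V$ (and not merely for one distinguished element $v$), so for each $i$ we may feed $v_i$ into Lemma~\ref{lemma:ExistsVertexWhoseProjectionIsFiniteIndex} to obtain a unique $u_i\in X^*$ with $\varphi_{v_i}(\Srist_H(V))\leq \rist_G(u_i)$ and $\varphi_{v_iu_i}(\Srist_H(V))$ of finite index in $G=\varphi_{v_iu_i}(\St_G(v_iu_i))$. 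Set $w_i=v_iu_i\in v_iX^*$. Since the elements of $V$ are pairwise incomparable, so are the $w_i$, and uniqueness of each $u_i$ gives uniqueness of the tuple $(w_1,\dots,w_k)$. The goal is then to check that $\Srist_H(V)$ satisfies the three conditions of Definition~\ref{defn:diag} with supporting vertex set $W=\{w_1,\dots,w_k\}$.

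First I would verify condition (i), that $\Srist_H(V)\leq \Srist_G(W)$. The inclusion $\varphi_{v_i}(\Srist_H(V))\leq \rist_G(u_i)$ means precisely that every element of $\Srist_H(V)$ acts trivially below $v_i$ except possibly below $v_iu_i=w_i$; combined with the fact that $\Srist_H(V)\leq \Srist_G(V)$ already forces trivial action off $V$ and fixing of $V$ pointwise, this shows every $h\in\Srist_H(V)$ fixes every vertex not comparable to some $w_i$ and fixes each $w_i$ (the latter because $h$ fixes $v_i$ and $\varphi_{v_i}(h)\in\rist_G(u_i)$ fixes $u_i$). Hence $\Srist_H(V)\leq\rist_G(W)\cap\St_G(W)=\Srist_G(W)$. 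Condition (ii), that $\varphi_{w_i}(\Srist_H(V))$ is of finite index in $\varphi_{w_i}(\St_G(w_i))$, is exactly the finite-index statement produced by Lemma~\ref{lemma:ExistsVertexWhoseProjectionIsFiniteIndex} together with the self-replicating hypothesis $\varphi_{w_i}(\St_G(w_i))=G$.

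The main work is condition (iii): for each $i$, the map $\varphi_{w_i}$ must be injective on $\Srist_H(V)$. Equivalently, if $h\in\Srist_H(V)$ has $\varphi_{w_i}(h)=1$ then $h=1$. Since $\varphi_{v_i}(h)\in\rist_G(u_i)$ and $\varphi_{w_i}(h)=\varphi_{u_i}(\varphi_{v_i}(h))$ is the section of $\varphi_{v_i}(h)$ at $u_i$ inside $\rist_G(u_i)$, injectivity of $\varphi_{u_i}$ on $\rist_G(u_i)$ gives $\varphi_{v_i}(h)=1$. Now Lemma~\ref{lemma:ProjectionsInMinimalDependenceSets}~\ref{item:NonTrivialvImpliesNonTrivialw}, applied with the roles of the vertices of $V$ interchanged (legitimate because, by Lemma~\ref{lemma:MinimalDependenceSetForAll}, $V$ is a minimal dependence set for $v_i$ as well), yields $\varphi_{v_j}(h)=1$ for all $j$: indeed $\varphi_{v_i}(h)=1$ forces $\varphi_{v_j}(h)=1$ for every $j$, since otherwise $h\in\Srist_H(V\setminus\{v_i\})$ would witness $\varphi_{v_j}(\Srist_H(V\setminus\{v_i\}))\neq 1$, contradicting minimality. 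As $h\in\Srist_H(V)\leq\Srist_G(V)\leq\rist_G(V)$ acts trivially off the subtrees below the $v_j$ and now also trivially below each $v_j$, we get $h=1$. The main obstacle I anticipate is bookkeeping the ``change of base point'' carefully — making sure that invoking Lemma~\ref{lemma:ExistsVertexWhoseProjectionIsFiniteIndex} at each $v_i$ and invoking the non-triviality-propagation of Lemma~\ref{lemma:ProjectionsInMinimalDependenceSets} symmetrically across $V$ are both justified, and that $\varphi_{v_iu_i}=\varphi_{u_i}\circ\varphi_{v_i}$ is used with the correct (anti-homomorphism) composition order throughout.
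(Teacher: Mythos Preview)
Your proposal is correct and follows essentially the same route as the paper's proof: apply Lemma~\ref{lemma:ExistsVertexWhoseProjectionIsFiniteIndex} at each $v_i$ (justified via Lemma~\ref{lemma:MinimalDependenceSetForAll}) to produce $w_i=v_iu_i$, then verify the three conditions of Definition~\ref{defn:diag} exactly as you describe, with injectivity handled by reducing $\varphi_{w_i}(h)=1$ to $\varphi_{v_i}(h)=1$ and then propagating via Lemma~\ref{lemma:MinimalDependenceSetForAll} and Lemma~\ref{lemma:ProjectionsInMinimalDependenceSets}~\ref{item:NonTrivialvImpliesNonTrivialw}. Your explicit invocation of Lemma~\ref{lemma:MinimalDependenceSetForAll} before calling Lemma~\ref{lemma:ExistsVertexWhoseProjectionIsFiniteIndex} at each $v_i$ is a welcome clarification that the paper leaves implicit.
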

\begin{proof}
By Lemma~\ref{lemma:ExistsVertexWhoseProjectionIsFiniteIndex}, for every $i\in \{1,\dots, k\}$, there exists a unique $u_i\in X^*$ such that $\varphi_{v_i}(\Srist_H(V))\leq \rist_G(u_i)$ and $\varphi_{v_iu_i}(\Srist_H(V))$ is of finite index in $G$. Let us write $w_i=v_iu_i$, and let $W=\{w_1,\dots, w_k\}$. We want to show that $\Srist_H(V)$ is a diagonal block subgroup of $G$ with supporting vertex set $W$. The uniqueness will follow directly from the uniqueness of the elements $u_i$ guaranteed by Lemma~\ref{lemma:ExistsVertexWhoseProjectionIsFiniteIndex}.

To simplify the notation, let us write $L=\Srist_H(V)$. The first thing to show is that $L\leq \Srist_H(W)$. For this, let us first observe that $L\leq \St_G(W)$. Indeed, for every $g\in L$ and for every $w_i\in W$, we have
\[g\cdot w_i = g\cdot v_iu_i = (g\cdot v_i)(\varphi_{v_i}(g))\cdot u_i = v_iu_i=w_i\]
where we have used the fact that $g\in L\leq \St_G(V)$ and $\varphi_{v_i}(g)\in \rist_G(u_i)\leq \St_G(u_i)$. Now, to prove that $L\leq \rist_H(W)$, let $z\in X^*$ be an element that is not comparable with any element of $W$. If $z$ is not comparable with any element of $V$, then we already know that $L\cdot z=z$, since $L\leq \rist_H(V)$. If $z$ is comparable with some element $v_i\in V$, then we must have $v_i\leq z$. Indeed, the only other possibility is $z\leq v_i$, but then we would have $z\leq v_i\leq w_i$, which contradicts our assumption that $z$ is not comparable to any element of $W$. Thus, there exists some $y\in X^*$ such that $z=v_iy$. Furthermore, this $y$ cannot be comparable with $u_i$, otherwise $z$ and $w_i$ would be comparable. This implies that $\rist_G(u_i)\cdot y=y$, and since
\[L\cdot z = L\cdot v_iy = v_i (\varphi_{v_i}(L)\cdot y)\]
with $\varphi_{v_i}(L)\leq \rist_G(u_i)$, we conclude that $L\cdot z=z$. This finishes showing that $L\leq \Srist_H(W)$.

The second thing we need to prove is that $\varphi_{w_i}(L)$ is of finite index in $G$ for all $1\leq i \leq k$. This, however, is immediately verified by our construction of the $w_i$.

Lastly, to prove that $L$ is a diagonal block subgroup with supporting vertex set $W$, we need to show that for every $1\leq i \leq k$, the map $\varphi_{w_i}$ is injective on $L$. Let us fix $i\in \{1,\dots, k\}$, and let $g\in L$ be an element such that $\varphi_{w_i}(g)=1$. Then, we claim that $\varphi_{v_i}(g)=1$. Indeed, since $\varphi_{v_i}(L)\leq \rist_G(w_i)$, we see that $\varphi_{v_i}(g)$ fixes every element of $X^*\setminus w_iX^*$, so by our assumption that $\varphi_{v_i}(g)$ also fixes every element of $w_iX^*$, we conclude that $\varphi_{v_i}(g)=1$. Let $v_j\in V$ be any element. By Lemma~\ref{lemma:MinimalDependenceSetForAll}, the set $V$ is a minimal dependence set for $v_j$, and therefore, by Lemma~\ref{lemma:ProjectionsInMinimalDependenceSets}~\ref{item:NonTrivialvImpliesNonTrivialw}, we must also have $\varphi_{v_j}(g)=1$. As $v_j$ was arbitrary, we see that $g$ fixes all elements of $VX^*$, and since $g\in \Srist_H(V)$, it must also fix all elements outside of $VX^*$, which means that $g=1$. Therefore, $\varphi_{v_i}$ is injective on $L$, which concludes the proof.
\end{proof}

\begin{lemma}\label{lemma:MinimalDependenceSetsInvariantByH}
Let $G\leq \Aut(X^*)$ be a finitely generated self-replicating just infinite branch group acting tree-primitively on $X^*$, let $H\leq G$ be a finitely generated subgroup and let $F\subseteq X^*$ be a non-empty
full supporting set for $H$. Let us denote by $\VV$ the set of all minimal dependence sets for $H$. For any $V\in \VV$, let $W_V\subseteq X^*$ denote the unique set described in Lemma~\ref{lemma:DiagonalSubgroupFromMinimalDependenceSet}, and let us denote by $W_{\VV}=\{W_V \subseteq X^* \mid V\in \VV\}$. Then, both $\VV$ and $W_{\VV}$ are finite $H$-invariant subsets of $\PP(X^*)$.
\end{lemma}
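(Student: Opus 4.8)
The plan is to deduce both invariance statements from a single conjugation identity for sections. Finiteness is the easy part: since $F$ is a full supporting set it is contained in a transversal of $X^*$, and transversals are finite, so $F$ is finite; every minimal dependence set is by definition a subset of $F$, hence $\mathcal{V}\subseteq\mathcal{P}(F)$ is finite, and $W_{\mathcal{V}}$, being the image of $\mathcal{V}$ under the map $V\mapsto W_V$, is finite as well. It remains to prove $H$-invariance of both sets.

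The key identity I would record is this: if $S\le\Aut(X^*)$ fixes a vertex $u\in X^*$ and $h\in\Aut(X^*)$ is arbitrary, then $\varphi_{h\cdot u}(hSh^{-1})=\varphi_u(h)\,\varphi_u(S)\,\varphi_u(h)^{-1}$. Indeed, for $g\in S$ the cocycle relation $\varphi_t(g_1g_2)=\varphi_{g_2\cdot t}(g_1)\varphi_t(g_2)$ with $t=h\cdot u$ gives $\varphi_{h\cdot u}(hgh^{-1})=\varphi_u(hg)\,\varphi_{h\cdot u}(h^{-1})=\varphi_u(h)\varphi_u(g)\varphi_u(h)^{-1}$, using $g\cdot u=u$ together with $\varphi_{h\cdot u}(h^{-1})=\varphi_u(h)^{-1}$, which itself follows from $1=\varphi_u(h^{-1}h)=\varphi_{h\cdot u}(h^{-1})\varphi_u(h)$. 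I will use this alongside the standard facts $h\rist_H(V)h^{-1}=\rist_H(h\cdot V)$, $h\St_H(V)h^{-1}=\St_H(h\cdot V)$, hence $h\,\Srist_H(V)\,h^{-1}=\Srist_H(h\cdot V)$, valid for $h\in H$.

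For the invariance of $\mathcal{V}$, fix $V\in\mathcal{V}$ and $h\in H$, and choose $v\in V$ for which $V$ is a minimal dependence set. Since $F$ is $H$-invariant, $h\cdot V\subseteq F$; since $\Srist_H(V)$ fixes $v$, the identity shows $\varphi_{h\cdot v}(\Srist_H(h\cdot V))$ is a $G$-conjugate of $\varphi_v(\Srist_H(V))\ne 1$, hence is nontrivial, so $\delta_{H,F}(h\cdot v)\le|h\cdot V|=|V|=\delta_{H,F}(v)$; applying the same to $h^{-1}$ gives equality, so $h\cdot V$ is a minimal dependence set for $h\cdot v$, i.e.\ $h\cdot V\in\mathcal{V}$. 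For the invariance of $W_{\mathcal{V}}$ I would show $W_{h\cdot V}=h\cdot W_V$ and then invoke uniqueness. Write $V=\{v_1,\dots,v_k\}$ and $W_V=\{w_1,\dots,w_k\}$ with $w_i=v_iu_i\in v_iX^*$ as produced by Lemma~\ref{lemma:DiagonalSubgroupFromMinimalDependenceSet}, so that $h\cdot w_i=(h\cdot v_i)(\varphi_{v_i}(h)\cdot u_i)\in(h\cdot v_i)X^*$. Since $\Srist_H(V)\le\Srist_G(W_V)$, each element of $\Srist_H(V)$ fixes every $w_i$; conjugating $\Srist_H(V)\le\Srist_G(W_V)$ by $h$ yields condition~(i) of Definition~\ref{defn:diag} for $\Srist_H(h\cdot V)$ with supporting vertex set $h\cdot W_V$, the identity exhibits $\varphi_{h\cdot w_i}(\Srist_H(h\cdot V))$ as a $G$-conjugate of the finite-index subgroup $\varphi_{w_i}(\Srist_H(V))$ (condition~(ii)), and it shows that $\varphi_{h\cdot w_i}$ is injective on $\Srist_H(h\cdot V)$ because $\varphi_{w_i}$ is injective on $\Srist_H(V)$ (condition~(iii)). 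Thus $\Srist_H(h\cdot V)$ is a diagonal block subgroup with supporting vertex set $\{h\cdot w_1,\dots,h\cdot w_k\}$, each $h\cdot w_i$ lying in $(h\cdot v_i)X^*$, so the uniqueness clause of Lemma~\ref{lemma:DiagonalSubgroupFromMinimalDependenceSet} forces $W_{h\cdot V}=h\cdot W_V$; since $h\cdot V\in\mathcal{V}$, this gives $h\cdot W_V\in W_{\mathcal{V}}$.

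I expect the only delicate point to be the cocycle bookkeeping behind the conjugation identity and the consistent treatment of $\rist$, $\St$, and $\Srist$ under conjugation by elements of $H$; once that identity is in hand, every other step is purely formal.
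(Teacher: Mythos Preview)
Your proof is correct and follows essentially the same approach as the paper: both argue finiteness via $\mathcal{V}\subseteq\mathcal{P}(F)$, prove $H$-invariance of $\mathcal{V}$ from the conjugation identity $\Srist_H(h\cdot V)=h\,\Srist_H(V)\,h^{-1}$ together with the symmetric inequality on $\delta_{H,F}$, and deduce $W_{h\cdot V}=h\cdot W_V$ by checking that conjugation preserves the three defining conditions of a diagonal block subgroup and invoking the uniqueness clause of Lemma~\ref{lemma:DiagonalSubgroupFromMinimalDependenceSet}. The only difference is that you spell out the cocycle computation and the verification of conditions~(i)--(iii) in detail, whereas the paper summarises this step as ``almost immediate \dots\ since the properties defining diagonal block subgroups are invariant under conjugation.''
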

\begin{proof}
Let $V\in \VV$ be a minimal dependence set, let $h\in H$ be any element, and let $v\in V$ be any element. Since $V$ is a minimal dependence set for $v$ by Lemma~\ref{lemma:MinimalDependenceSetForAll}, we have $\varphi_v(\Srist_H(V))\ne 1$. Since $\Srist_H(h\cdot V) = h\Srist_H(V)h^{-1}$, we must have $\varphi_{h\cdot v}(\Srist_H(h\cdot V))\ne 1$. If $\delta_{H,F}(h\cdot v)\ne |h\cdot V|$, then there must exist some minimal dependence set $W$ for $h\cdot v$ with $|W|<|V|$. However, in this case, the same argument as above would yield $\varphi_v(\Srist_H(h^{-1}\cdot W))\ne 1$ and thus $\delta_{H,F}(v)\leq |W|<|V|=\delta_{H,F}(v)$, which is absurd. We conclude that $h\cdot V$ must be a minimal dependence set for $h\cdot v$. This proves that $\VV$ is $H$-invariant.

To prove that $W_{\VV}$ is also $H$-invariant, it suffices to notice that $h\cdot W_V = W_{h\cdot V}$ for all $V\in \VV$ and $h\in H$. For this, it suffices to show that if $\Srist_H(V)$ is a diagonal block subgroup of $G$ with supporting vertex set $W_V$, then $\Srist_H(h\cdot V)$ is a diagonal block subgroup of $G$ with supporting vertex set $W_{h\cdot V}$. However, this is almost immediate from the fact that $\Srist_H(h\cdot V) = h\Srist_H(V)h^{-1}$, since the properties defining diagonal block subgroups are invariant under conjugation.

Finiteness of both $\VV$ and $W_\VV$ follows from the fact that these two sets are in bijection and that $\VV\subseteq \PP(F)$ with $F$ finite.
\end{proof}

Using the previous results, one can show that in groups with the SIP, one can always find for any given finitely generated subgroup a full supporting set that can be nicely partitioned in such a way that the rigid stabilisers of elements of this partition are diagonal subgroups.

\begin{lemma}\label{lemma:BunchOfDiagonalBlocks}
Let $G\leq \Aut(X^*)$ be a finitely generated self-replicating branch group with the subgroup induction property acting tree-primitively on $X^*$, and let $H\leq G$ be a finitely generated subgroup of $G$. Then, there exist a full supporting set $F\subseteq X^*$ for $H$ and a partition $F=\bigsqcup_{i=1}^{k}F_i$ such that for every $1\leq i \leq k$, $\Srist_H(F_i)$ is a diagonal block subgroup with supporting vertex set $F_i$.
\end{lemma}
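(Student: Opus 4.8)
The plan is to build the desired full supporting set $F$ together with its partition by iteratively refining an arbitrary full supporting set until all the minimal dependence sets it carries are pairwise disjoint, so that they directly furnish the partition. We start from a full supporting set $F_0$ for $H$, which exists by Proposition~\ref{prop:ExistNiceTransversal} and the SIP. On $F_0$ we have the dependence function $\delta_{H,F_0}$ and, for each $v\in F_0$, a minimal dependence set; by Lemma~\ref{lemma:MinimalDependenceSetForAll} each such set is minimal for all of its members and $\delta_{H,F_0}$ is constant on it, and by Lemma~\ref{lemma:MinimalDependenceSetsInvariantByH} the collection $\mathcal V$ of all minimal dependence sets is a finite $H$-invariant family. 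The obstacle to reading off a partition from $\mathcal V$ directly is that two distinct minimal dependence sets $V,V'\in\mathcal V$ can overlap without being equal: the "dependence" relation they encode need not be transitive.

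The key step is to resolve these overlaps by passing to descendants. For each $V\in\mathcal V$, Lemma~\ref{lemma:DiagonalSubgroupFromMinimalDependenceSet} produces the canonical set $W_V=\{v_iu_i\}$ with $v_i\le v_iu_i$ such that $\Srist_H(V)$ is a diagonal block subgroup with supporting vertex set $W_V$; moreover, by Lemma~\ref{lemma:MinimalDependenceSetsInvariantByH}, $W_{\mathcal V}=\{W_V\mid V\in\mathcal V\}$ is again a finite $H$-invariant family, and $h\cdot W_V=W_{h\cdot V}$. I would then enlarge $F_0$ to a new full supporting set $F$ consisting of vertices deep enough so that every element of $\bigcup_{V\in\mathcal V}W_V$ is an ancestor of (or equal to) a vertex of the new transversal: concretely, take $n$ large enough that every $W_V\subseteq X^{\le n}$ and replace each $v\in F_0$ whose $\varphi_v(\St_H(v))=G$ by its descendants at the appropriate level, keeping the finitely many "finite-section" vertices as they are (they, or their descendants, form $T\setminus F$). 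One must check that this new $F$ is still $H$-invariant (it is, since $F_0$ and $W_{\mathcal V}$ are, and the refinement is level-uniform on the $G$-section part) and still satisfies the two defining conditions of a full supporting set, using self-replicatingness: if $\varphi_v(\St_H(v))=G$ then $\varphi_{vz}(\St_H(vz))=G$ for appropriate $z$ by the argument already used in Lemma~\ref{lemma:ExistsVertexWhoseProjectionIsFiniteIndex}.

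On this refined $F$, I claim the sets $W_V$ (now sitting inside $F$, after the refinement is chosen so that each $W_V$ consists of vertices of $F$) are pairwise equal or disjoint, which gives the partition. Indeed, suppose $w\in W_V\cap W_{V'}$ with corresponding ancestors $v\in V$, $v'\in V'$; since $w$ has a single ancestor on each level and $v,v'$ lie at the levels dictated by $F_0$, we get $v=v'$, hence by Lemma~\ref{lemma:DependenceSetsCommuteOrContained} either $V\subseteq V'$ or $V'\subseteq V$ (the commuting alternative is excluded because $\varphi_v(\Srist_H(V))\le\rist_G(\text{initial segment of }w)$ and similarly for $V'$, so they cannot commute as both have nontrivial $\varphi_w$-image), and then minimality of cardinalities forces $V=V'$, hence $W_V=W_{V'}$. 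Thus $W_{\mathcal V}$ is a partition of a subset of $F$; enlarging it by adding the remaining vertices of $F$ as singletons $F_i$ (each a singleton supporting vertex set, hence a diagonal block subgroup equal to a finite-index subgroup of $\rist_G(v_i)$ when $\varphi_{v_i}(\St_H(v_i))=G$, or handled trivially when the section is finite) yields the partition $F=\bigsqcup F_i$ with $\Srist_H(F_i)$ a diagonal block subgroup with supporting vertex set $F_i$ for each $i$, via Lemma~\ref{lemma:DiagonalSubgroupFromMinimalDependenceSet}.

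The main obstacle I anticipate is the bookkeeping in the refinement step: one must choose the new transversal so that (i) it remains $H$-invariant, (ii) it remains a full supporting set, and (iii) every $W_V$ literally consists of vertices of the new $F$ at a uniform prescribed depth, so that the "single ancestor per level" argument applies to force $V=V'$ from $W_V\cap W_{V'}\neq\emptyset$. Getting all three simultaneously requires being a little careful about vertices whose $H$-section is finite versus those with section $G$, and about the possibility that different $V$'s have supporting vertices at different depths — which is why the uniform level $n$ and the level-$n$ refinement on the $G$-section part are used.
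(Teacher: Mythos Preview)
Your strategy is the paper's: take the $W_V$'s coming from Lemma~\ref{lemma:DiagonalSubgroupFromMinimalDependenceSet} as the parts of the partition and use Lemma~\ref{lemma:MinimalDependenceSetsInvariantByH} for $H$-invariance. But the execution diverges from the paper in a way that creates a real gap.

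The paper does \emph{not} refine to a uniform level. It simply sets $F=\bigsqcup_{V\in\mathcal V}W_V$ and proves two things: first, that for distinct $V_1,V_2\in\mathcal V$ any $w_1\in W_{V_1}$ and $w_2\in W_{V_2}$ are \emph{incomparable} (not merely distinct), so that $F$ is a set of pairwise incomparable vertices; second, that $F$ is a full supporting set, by taking $T=(T'\setminus F')\cup F$ and using $H$-invariance of $F$ together with $\varphi_{v'}(\St_H(v'))=G$ to cover every descendant of each $v'\in F'$. One then observes $\Srist_H(W_V)=\Srist_H(V)$ (since $W_V\subseteq VX^*$ gives one inclusion and Lemma~\ref{lemma:DiagonalSubgroupFromMinimalDependenceSet} gives the other), so each part is already a diagonal block.

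Your uniform-level refinement produces ``remaining vertices'' $vz$ that are not below any element of any $W_V$, and you propose to add them as singleton parts with the assertion that $\Srist_H(\{vz\})=\rist_H(vz)$ is a finite-index subgroup of $\rist_G(vz)$. This is exactly what can fail. If $v\in V$ and $w=vu\in W_V$ with $u\ne\epsilon$, then any sibling $vz$ with $|z|=|u|$, $z\ne u$, satisfies $\rist_H(vz)\le \rist_H(v)\le\rist_G(vu)$ while also $\rist_H(vz)\le\rist_G(vz)$; since $vu$ and $vz$ are incomparable this forces $\rist_H(vz)=1$, which is not a diagonal block on $\{vz\}$ (its $\varphi_{vz}$-image is trivial, not of finite index). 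So the singleton step is unjustified and in general false. Relatedly, your disjointness argument only handles a common vertex $w\in W_V\cap W_{V'}$; you still need incomparability of elements of distinct $W_V$'s to even assemble them into a set of pairwise incomparable vertices. The paper's route avoids both issues at once: drop the level-$n$ refinement, take $F=\bigsqcup W_V$, and prove incomparability via the commuting/trivial-center argument (Lemma~\ref{lemma:DependenceSetsCommuteOrContained} and Corollary~\ref{cor:CenterAlmostNormalIsTrivial}).
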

\begin{proof}
Since $G$ has the SIP, it follows from Proposition~\ref{prop:ExistNiceTransversal} that there exists a full supporting set $F'\subseteq X^*$ for $H$. For any minimal dependence set $V\subseteq F'$, Lemma~\ref{lemma:DiagonalSubgroupFromMinimalDependenceSet} gives us a subset $W_V\subseteq X^*$ such that $\Srist_H(V)$ is a diagonal block subgroup with supporting vertex set $W_V$ and such that for every $v\in V$, there exists a unique $w\in W$ with $v\leq w$.

Let us first show that if $V_1, V_2\subseteq F'$ are two distinct minimal dependence sets for $H$, then any pair of elements $w_1\in W_{V_1}$ and $w_2\in W_{V_2}$ are incomparable (notice that in particular, this implies that $W_{V_1}\cap W_{V_2}=\emptyset$). Indeed, let $w_1\in W_{V_1}$ and $w_2\in W_{V_2}$ be any two elements. Then, there exist unique elements $v_1\in V_1$ and $v_2\in V_2$ such that $v_1\leq w_1$ and $v_2\leq w_2$. Suppose that $w_1$ and $w_2$ are comparable. Then, this implies that $v_1$ and $v_2$ must also be comparable. Since $v_1,v_2\in F'$, which is a set of incomparable elements by definition, we must have $v_1=v_2$. Therefore, to prove our claim, it suffices to show that for any $v\in V_1\cap V_2$, the unique elements $w_1\in W_{V_1}$ and $w_2\in W_{V_2}$ satisfying $w_i\geq v$ are incomparable. By Lemma~\ref{lemma:DependenceSetsCommuteOrContained}, if we have $v\in V_1\cap V_2$ with $V_1\ne V_2$, then $\varphi_v(\Srist_H(V_1))$ and $\varphi_v(\Srist_H(V_2))$ commute. Since $\varphi_v(\Srist_H(V_i))$ is a finite index subgroup of $\rist_G(u_i)$ for $i=1,2$, where $w_i=vu_i$, it follows from Corollary~\ref{cor:CenterAlmostNormalIsTrivial} that $w_1$ and $w_2$ are incomparable, as otherwise the almost normal subgroups $\varphi_v(\Srist_H(V_1))$ and $\varphi_V(\Srist_H(V_2))$ would intersect non-trivially and thus have non-trivial center.

If we denote by $\VV$ the collection of all minimal dependence sets for $H$, it follows from the above that $W_{\VV}=\{W_V\}_{V\in \VV}$ is a collection of pairwise disjoint sets, so that it forms a partition of their union. Let us write $F=\bigsqcup_{V\in \VV} W_V$. As any two elements in any given $W_V$ are incomparable, and as any two elements in any distinct sets $W_{V_1}$ and $W_{V_2}$ are also incomparable by the above, $F$ is a set of incomparable elements. As $W_{\VV}$ is a finite $H$-invariant set by Lemma~\ref{lemma:MinimalDependenceSetsInvariantByH}, $F$ must be a finite $H$-invariant set. We want to show that $F$ is a full supporting set for $H$. Let $T'\supseteq F'$ be a transversal such that $|\varphi_v(\St_H(v))|<\infty$ if $v\in T'\setminus F'$, and let us define $T=(T'\setminus F') \cup F$. 
By construction, we have $|\varphi_v(\St_H(v))|<\infty$ for all $v\in T\setminus F$. Using the fact that $F'$ is a full supporting set for $H$ and that $G$ is self-replicating, we also have $\varphi_v(\St_H(v))=G$ for all $v\in F$, since for all $v\in F$, there exists $v'\in F'$ such that $v'\leq v$ and $\varphi_{v'}(\St_H(v'))=G$. To show that $F$ is a full supporting set for $H$, it thus only remains to show that $T$ is a transversal. For this, we need to show that for every $w\in X^*$, there exists some $v\in T$ such that $v$ and $w$ are comparable. Let $w\in X^*$ be any element. Since $T'$ is a transversal, there must exist some $v'\in T'$ such that $v'$ and $w$ are comparable. If $v'\in T'\setminus F'$, then $v'\in T$, so $w$ is comparable to some element of $T$. If $v'\in F'$, then there exist some $v\in F$ such that $v'\leq v$. As $\varphi_{v'}(\St_H(v'))=G$, we know that $H$ acts transitively on the set $v'X^{|v|-|v'|}$. Consequently, there must exist some $h\in \St_H(v')$ such that $h\cdot v$ is comparable with $w$. Now, since $F$ is $H$-invariant, we conclude that $w$ is comparable with an element of $F$, which finishes showing that $T$ is a transversal and thus that $F$ is a full supporting set for $H$.

To finish the proof, we only need to show that $\Srist_H(W_V)$ is  diagonal block subgroup with supporting set $W_V$ for every $V\in \VV$. Let us fix $V\in \VV$. We know by Lemma~\ref{lemma:DiagonalSubgroupFromMinimalDependenceSet} that $\Srist_H(V)$ is a diagonal block subgroup with supporting vertex set $W_V$. In particular, this means that $\Srist_H(V)\leq \Srist_H(W_V)$. As $W_V\subseteq VX^*$, we have $\Srist_H(W_V)\leq \Srist_H(V)$, so that $\Srist_H(V)=\Srist_H(W_V)$. Thus, $\Srist_H(W_V)$ is a diagonal block subgroup with supporting vertex set $W_V$.
\end{proof}

\subsection{Block subgroups in groups with the SIP}\label{subsection:ProofOfThm}

We are now in position to show the main result of this section, namely that every finitely generated subgroup of a group with the SIP acting tree-primitively on $X^*$ contains a block subgroup of finite index.

\begin{thm}\label{thm:BlockSubgroups}
Let $G\leq \Aut(X^*)$ be a finitely generated self-replicating branch group with the subgroup induction property acting tree-primitively on $X^*$, and let $H\leq G$ be a finitely generated subgroup of $G$. Then, $H$ is virtually a block subgroup. More precisely, there exist a full supporting set $F\subseteq X^*$ for $H$ and a partition $P=\{F_i\}_{i=1}^{k}$ of $F$ such that $B=B_1\cdots B_k$ is a block subgroup of finite index in $H$ with supporting partition $P$, where $B_i=\Srist_H(F_i)$ for $i=1,\dots, k$.
\end{thm}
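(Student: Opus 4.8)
The plan is to leverage Lemma~\ref{lemma:BunchOfDiagonalBlocks}, which already gives us most of what we need: a full supporting set $F = \bigsqcup_{i=1}^{k}F_i$ for $H$ such that each $\Srist_H(F_i)$ is a diagonal block subgroup with supporting vertex set $F_i$. The product $B_0 = \Srist_H(F_1)\cdots\Srist_H(F_k)$ is then a candidate block subgroup with supporting partition $P = \{F_i\}_{i=1}^{k}$: since elements of distinct $F_i$ are pairwise incomparable, the factors commute (Remark~\ref{rem:RistsCommute}), so $B_0$ is genuinely a direct product of diagonal block subgroups and hence a block subgroup with supporting partition $P$ by Definition~\ref{defn:Block}. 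So the only remaining task is to show that $B_0$ (or a suitable finite index subgroup of it, which will still be a block subgroup once we intersect each factor appropriately) has finite index in $H$.

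The key observation for the index bound is that $B_0 = \Srist_H(F)$. Indeed, $\Srist_H(F) = \rist_H(F)\cap \St_H(F)$; since the $F_i$ partition $F$ into blocks of pairwise incomparable vertices with cross-incomparability, one checks that $\rist_H(F) = \prod_i \rist_H(F_i)$ after intersecting with $\St_H(F)$, giving $\Srist_H(F) = \prod_i \Srist_H(F_i) = B_0$ (this is the same bookkeeping used repeatedly in Section~\ref{section:BlockStructure}, e.g.\ in the proof of Lemma~\ref{lemma:DiagonalSubgroupFromMinimalDependenceSet}). So it suffices to prove that $\Srist_H(F)$ has finite index in $H$. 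For this, recall that $F$ is contained in a transversal $T$ of $X^*$ with $\varphi_v(\St_H(v)) = G$ for $v\in F$ and $\varphi_v(\St_H(v))$ finite for $v\in T\setminus F$. The point-stabiliser $\St_H(T)$ has finite index in $H$ because $T$ is finite and $H$ acts with finitely many orbits on each level (in fact $H$ permutes $T$, as $F$ is $H$-invariant and one can arrange $T\setminus F$ to be $H$-invariant too). Now $\Srist_H(F) \supseteq \St_H(T) \cap \rist_H(F)$, and within $\St_H(T)$, an element fails to lie in $\rist_H(F)$ only through its sections at the finitely many vertices of $T\setminus F$; since each $\varphi_v(\St_H(v))$ is \emph{finite} for $v\in T\setminus F$, the map $\St_H(T)\to \prod_{v\in T\setminus F}\varphi_v(\St_H(v))$ has finite image, and its kernel is contained in $\Srist_H(F)$ (an element killing all sections outside $F$ and fixing $T$ fixes everything outside $FX^*$, hence lies in $\rist_H(F)$). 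Therefore $\Srist_H(F)$ has finite index in $\St_H(T)$, hence in $H$.

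Finally, to land exactly the statement with a block subgroup $B\leq H$ of finite index: take $B = \Srist_H(F) = B_0$. We have just shown $[H:B]<\infty$, and $B$ is a block subgroup with supporting partition $P=\{F_i\}$ by the discussion above. The main obstacle, I expect, is the index computation — specifically verifying cleanly that $\St_H(T)$ has finite index in $H$ (which needs the $H$-invariance of $T$, obtainable by replacing $T\setminus F$ by its $H$-orbit and using that $H$ has finitely many orbits on each level of a branch group) and that $\St_H(T)/\Srist_H(F)$ embeds into a finite product $\prod_{v\in T\setminus F}\varphi_v(\St_H(v))$ of finite groups. Everything else is either quoted from Lemma~\ref{lemma:BunchOfDiagonalBlocks} or is the routine incomparability bookkeeping for rigid stabilisers from Remark~\ref{rem:RistsCommute}.
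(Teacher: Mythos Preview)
Your proposal has a genuine gap at the key step: the claimed equality $\Srist_H(F) = \prod_{i=1}^{k}\Srist_H(F_i)$ is not justified and is in general false. The ``incomparability bookkeeping'' you invoke gives $\Srist_G(F) = \prod_i \Srist_G(F_i)$ inside the ambient group $G$ (or $\Aut(X^*)$), because there one can always cut an element into pieces supported on the individual $F_i$. But for an arbitrary subgroup $H\leq G$ this fails: given $h\in \Srist_H(F)$, its canonical decomposition $h=h_1\cdots h_k$ with $h_i\in \Srist_G(F_i)$ has no reason to satisfy $h_i\in H$. Concretely, for $v\in F_i$ the section $\varphi_v(h)$ lies in $\varphi_v(\Srist_H(F))$, which can be strictly larger than $\varphi_v(\Srist_H(F_i))$ (the latter is only a finite-index subgroup of $G$), so $h$ need not lie in the product $B_0$. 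The inclusion $B_0\leq \Srist_H(F)$ is fine; the reverse is the whole difficulty.

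The paper's proof does not attempt to prove this equality. Instead, after observing that $\Srist_H(F)$ has finite index in $H$ (your argument for this part is correct), it passes to the finite-index subgroup
\[
L=\bigcap_{v\in F}\{h\in \Srist_H(F)\mid \varphi_v(h)\in \varphi_v(B)\}\leq \Srist_H(F),
\]
and then shows $L=B$. The inclusion $L\subseteq B$ is the nontrivial one: given $g\in L$ and $i$, one finds $b_i\in B_i$ with $\varphi_v(b_i)=\varphi_v(g)$ for one $v\in F_i$, and then must argue that this forces $\varphi_w(b_i)=\varphi_w(g)$ for \emph{all} $w\in F_i$. This uses that $B_i\trianglelefteq \Srist_H(F)$, the injectivity of $\varphi_v$ on $B_i$, and crucially Corollary~\ref{cor:CenterAlmostNormalIsTrivial} (triviality of the centre of an almost normal subgroup) to rule out a nontrivial central obstruction. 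This argument is exactly what replaces your unjustified decomposition step.
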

\begin{proof}
By Lemma~\ref{lemma:BunchOfDiagonalBlocks}, there exists a full supporting set $F\subseteq X^*$ for $H$ and a partition $P=\{F_i\}_{i=1}^k$ of $F$ such that for every $1\leq i \leq k$, the subgroup $B_i=\Srist_H(F_i)$ is a diagonal block subgroup with supporting vertex set $F_i$.
Notice that if $i\ne j$, then $[B_i,B_j]=1$ and $B_i\cap B_j=1$, since $F_i\cap F_j=\emptyset$. Therefore, we can define the subgroup
\[B=B_1\cdots B_k \cong \prod_{i=1}^{k}B_i,\]
which is a block subgroup with supporting partition $P$.

To prove the theorem, we only need to show that $B$ is of finite index in $H$. It follows from the definition of a full supporting set that $\Srist_H(F)$ is of finite index in $H$. For every $v\in F$, let us define
\[L_v = \{h\in \Srist_H(F) \mid \varphi_v(h)\in \varphi_v(B)\}.\]
As $\varphi_v(B)=\varphi_v(B_i)$ for some $1\leq i \leq k$, with $B_i$ a diagonal block subgroup, $\varphi_v(B)$ is a finite index subgroup of $G$. It follows that $L_v$ is a finite index subgroup of $\Srist_H(F)$ and thus of $H$. Using the fact that $F$ is finite, we deduce that $L=\bigcap_{v\in F}L_v$ is a finite index subgroup of $H$.

We want to show that $L=B$. For this, let $g\in L$ be any element. For every $1\leq i \leq k$, we can find an element $b_i\in B_i$ such that $\varphi_v(b_i)=\varphi_v(g)$ for all $v\in F_i$. Indeed, let $v\in F_i$ be any element. By definition of $L$, there exists $b_i\in B_i$ such that $\varphi_v(b_i)=\varphi_v(g)$. Now, if there existed $w\in F_i$ such that $\varphi_w(b_i)\ne \varphi_w(g)$, then we would have $gb_i^{-1}\in \Srist_H(F)$ with $\varphi_w(gb_i^{-1})\ne 1$, but $\varphi_v(gb_i^{-1})=1$. As $B_i$ is a normal subgroup of $\Srist_H(F)$, for every $h\in B_i$ we would have $[h, gb_i^{-1}]\in B_i$ with $\varphi_v([h,gb_i^{-1}])=1$. By the injectivity of $\varphi_v$ on $B_i$, this would imply that $[h,gb_i^{-1}]=1$ for every $h\in B_i$ and thus that $\varphi_w(gb_i^{-1})$ would be a non-trivial element of $\varphi_w(B_i)$ that commutes with every element of $\varphi_w(B_i)$. Since $\varphi_w(B_i)$ is an almost normal subgroup of $G$, this would contradict Corollary~\ref{cor:CenterAlmostNormalIsTrivial}. Therefore, we can conclude that $\varphi_w(b_i)=\varphi_w(g)$ for all $w\in F_i$. Notice that since $B_i=\Srist_H(F_i)$, we have $\varphi_w(b_i)=1$ for all $w\in F\setminus F_i$. Therefore, if we define $b=\prod_{i=1}^{k}b_i$, we have $\varphi_v(b)=\varphi_v(g)$ for all $v\in F$, so that $\varphi_v(gb^{-1})=1$ for all $v\in F$. Therefore, $gb^{-1}$ fixes every vertex comparable with an element of $F$. Since $gb^{-1}\in \Srist_H(F)$, it also fixes every vertex that is not comparable with an element of $F$, which means that $gb^{-1}$ fixes every element of $X^*$. Thus, $g=b\in B$, which shows that $B=L$ and thus is of finite index in $H$.
\end{proof}

\begin{rem}
It is not difficult to obtain as a corollary that all finitely generated subgroups of a group satisfying the hypotheses of Theorem~\ref{thm:BlockSubgroups} are closed in the profinite topology. In other words, such a group is LERF. However, we will not present a proof here, since one can show directly (at the price of having a more complicated proof) that a self-replicating branch group with the subgroup induction property is LERF, without any assumption on the tree-primitivity of the action (see~\cite{FrancoeurLeemann20}).
\end{rem}

\begin{rem}
Note that in Theorem~\ref{thm:BlockSubgroups}, one could remove the assumption that $G$ is finitely generated. Indeed, by Corollary~4.5 of~\cite{GrigorchukLeemannNagnibeda21}, a branch group with the subgroup induction property is either finitely generated or locally finite, and the conclusion of Theorem~\ref{thm:BlockSubgroups} obviously holds for locally finite groups, although it is not interesting in this case.
The same remark also holds for Theorem~\ref{thm:RegularBlockSubgroup} below.
\end{rem}

\begin{rem}
A careful reader might have spotted the fact that in the proof of Theorem~\ref{thm:BlockSubgroups} the hypotheses of $H$ being finitely generated and of $G$ being self-replicating and having the subgroup induction property are used only twice. Once to show the existence of a full supporting set for $H$ in Proposition~\ref{prop:ExistNiceTransversal}, and a second time in Lemma~\ref{lemma:MinimalDependenceSetForAll} to ensure that $G$ is just infinite.
That is, we have proved the following more precise version of Theorem~\ref{thm:BlockSubgroups}.

Let $H$ be a subgroup of a finitely generated group $G\leq\Aut(X^*)$.
Then for the properties
\begin{enumerate}
\item
$H$ is finitely generated, \label{Property1}
\item
There exists a full supporting set for $H$. In other words: there exists a transversal $T$ of $X^*$ such that the sections of $H$ along $T$ are either equal to $G$ or finite, \label{Property2}
\item
$H$ is virtually a block subgroup, \label{Property3}
\end{enumerate}
we have the following implications.
\ref{Property3} always implies \ref{Property1}.
If $G$ is branch, then \ref{Property2} implies \ref{Property1} by Lemma \ref{lem:NiceTransversalFG}.
If $G$ is self-replicating with the subgroup induction property, then \ref{Property1} implies  \ref{Property2} by Proposition~\ref{prop:ExistNiceTransversal}.
Finally, if $G$ is a just infinite branch group acting tree-primitively on $X^*$, then  \ref{Property2} implies  \ref{Property3}, see the proof of Theorem~\ref{thm:BlockSubgroups}.
In particular, if $G$ is a finitely generated self-replicating branch group with the subgroup induction property and acting tree-primitively on $X^*$, then the Properties  \ref{Property1} to  \ref{Property3} are all equivalent.
\end{rem}

Using the previous theorem, one can obtain the slightly stronger result below, which will be useful later on.

\begin{cor}\label{cor:BlockSubgroupNiceProjections}
Let $G\leq \Aut(X^*)$ be a finitely generated self-replicating branch group with the subgroup induction property acting tree-primitively on $X^*$, and let $H\leq G$ be a finitely generated subgroup of $G$. Then, there exist a full supporting set $F\subseteq X^*$ for $H$ and a partition $P=\{F_i\}_{i=1}^{k}$ of $F$ such that
\begin{enumerate}[label=(\roman*)]
\item $B=B_1\cdots B_k$ is a block subgroup of finite index in $H$ with supporting partition $P$, where $B_i=\Srist_H(F_i)$,
\item $\varphi_v(\St_H(F_i))=G$ for all $i\in\{1,\dots, k\}$ and for all $v\in F_i$.
\end{enumerate}
\end{cor}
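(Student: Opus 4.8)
The plan is to upgrade the conclusion of Theorem~\ref{thm:BlockSubgroups} by adjusting the full supporting set so that each part of the partition ``sees'' all of $G$ in its stabiliser, not merely along individual vertices. Starting from Theorem~\ref{thm:BlockSubgroups}, we obtain a full supporting set $F$, a partition $P = \{F_i\}_{i=1}^k$ and a finite index block subgroup $B = B_1\cdots B_k \leq H$ with $B_i = \Srist_H(F_i)$ a diagonal block subgroup with supporting vertex set $F_i$. What we know so far is that $\varphi_v(\St_H(v)) = G$ for each $v\in F$ (by the definition of a full supporting set), but we want the stronger statement that $\varphi_v(\St_H(F_i)) = G$ for all $v\in F_i$, i.e.\ the projection stays surjective even after imposing that the whole block $F_i$ be stabilised.

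First I would fix $i$ and a vertex $v\in F_i$ and analyse the subgroup $\varphi_v(\St_H(F_i)) \leq \varphi_v(\St_H(v)) = G$. Since $B_i = \Srist_H(F_i) \trianglelefteq \St_H(F_i)$ and $B_i$ is a diagonal block subgroup, $\varphi_v(B_i)$ is a finite index subgroup of $\varphi_v(\St_G(v)) = G$; hence $\varphi_v(\St_H(F_i))$ is a subgroup of $G$ containing the finite index subgroup $\varphi_v(B_i)$, so it is itself of finite index in $G$. The issue is to rule out that this finite index is strictly bigger than $1$. The key tool is Proposition~\ref{prop:ExistNiceTransversal} (or rather, the inductive-collection argument underlying full supporting sets): the point is that we have freedom to refine $F$. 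Concretely, if for some $v\in F_i$ the group $\varphi_v(\St_H(F_i))$ were a proper finite index subgroup $G_0 \lneq G$, then replacing $v$ by the transversal $vX^n$ for suitable $n$ — or more precisely by a transversal of $vX^*$ adapted to $G_0$ — and correspondingly refining every $w\in F_i$ (using that $\Srist_H(F_i)$ is diagonal, so the ``shape'' at $v$ determines the shape at each $w\in F_i$) would produce a new full supporting set on which the projections are the full group $G$. Here one uses self-replicatingness to propagate surjectivity downward, exactly as in the last paragraph of the proof of Lemma~\ref{lemma:BunchOfDiagonalBlocks}.

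The cleanest way to organise this is probably to invoke the inductive-collection machinery directly: define $\C$ to be the collection of finitely generated subgroups $H\leq G$ for which the conclusion of the corollary holds, and check conditions (I), (II), (III) of Definition~\ref{defn:InductiveClass}; condition (I) is immediate, condition (II) follows because passing to a finite index overgroup only shrinks the stabilisers by finite index and one can absorb this into $B$, and condition (III) is where self-similarity does the work — if $H\leq \St_G(X)$ and each $\varphi_x(H)$ admits such a decomposition, one assembles the decompositions of the $\varphi_x(H)$ over the letters $x\in X$ into one for $H$, noting that $\varphi_{xv}(\St_H(xv\cdots)) = \varphi_v(\St_{\varphi_x(H)}(v\cdots))$ and that the block subgroups at level one combine into a block subgroup of $H$. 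Then the SIP gives the result for all finitely generated $H$.

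The main obstacle I anticipate is condition (III) in the inductive-collection verification: one must check that the partition and block subgroup produced for each $\varphi_x(H)$ can be glued along $X$ into a single block subgroup of $H$ with the surjectivity property, and in particular that a part $F_i$ of the glued partition that happens to straddle several subtrees $xX^*$ still satisfies $\varphi_v(\St_H(F_i)) = G$. This is really the same diagonal-block bookkeeping as in Lemma~\ref{lemma:BunchOfDiagonalBlocks} and Theorem~\ref{thm:BlockSubgroups}, so it should go through, but it requires care with how $\St_H(F_i)$ relates to the stabilisers of the pieces $F_i\cap xX^*$ in the various $\varphi_x(H)$, and with the fact that a block subgroup decomposition for $\varphi_x(H)$ need not be unique, so the gluing must be done compatibly across letters — most likely by first passing to a common refinement of the $|X|$ transversals, which is permitted since refining a full supporting set preserves the full-supporting-set property.
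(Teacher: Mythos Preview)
Your inductive-collection approach has a genuine gap in condition~(III), and it is not the one you anticipate. Suppose $H\leq \St_G(X)$ and each $\varphi_x(H)$ carries data $(F^{(x)},P^{(x)},B^{(x)})$. Gluing gives the set $F=\bigsqcup_{x\in X} xF^{(x)}$ with partition $\{xF^{(x)}_j\}_{x,j}$; no part straddles two subtrees. The problem is rather that a block subgroup $B^{(x)}\leq \varphi_x(H)$ does not lift to anything useful in $H$: the preimage $\varphi_x^{-1}(B^{(x)})\cap H$ acts arbitrarily on $yX^*$ for $y\neq x$, and intersecting with $\rist_H(x)$ can kill everything. Concretely, if $H=\{(g,g):g\in G\}$ is itself a diagonal, then $\varphi_0(H)=\varphi_1(H)=G$ with data $F^{(0)}=F^{(1)}=\{\epsilon\}$ and $B^{(0)}=B^{(1)}=G$; your gluing yields the partition $\{\{0\},\{1\}\}$, but $\Srist_H(\{0\})=\Srist_H(\{1\})=1$, so the assembled ``block subgroup'' is trivial, not of finite index. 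The correct block structure for this $H$ is the single part $\{0,1\}$, which your procedure cannot discover from the projections alone. Fixing this would force you to re-run the minimal-dependence-set machinery of Lemma~\ref{lemma:BunchOfDiagonalBlocks} inside the verification of~(III), so nothing is saved. (Condition~(II) also has a wrinkle you pass over: a full supporting set for $H$ need not be $L$-invariant.)

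The paper's argument is quite different and avoids induction altogether. One applies Theorem~\ref{thm:BlockSubgroups} once to $H$, obtaining some $(F,P,B)$; then one sets $L=\St_H(F)$ and applies Theorem~\ref{thm:BlockSubgroups} \emph{again}, this time to $L$, obtaining new data $(F',P',C)$. The key structural fact is that two distinct vertices in the same part $F'_i$ must lie below \emph{distinct} elements of $F$: if $v_1,v_2\in F'_i$ both sat below the same $u\in F$, then $\varphi_u(C_i)$ would contain $\rist_G(w_1')'$ and $\rist_G(w_2')'$ for incomparable $w_1',w_2'$ by Theorem~\ref{thm:StructureAlmostNormalSubgroups}, contradicting injectivity of $\varphi_{v_2}$ on the diagonal block $C_i$. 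Since $L$ fixes $F$ pointwise and the partition $P'$ is $L$-invariant, this forces $\St_L(F'_i)=\St_L(v)$ for every $v\in F'_i$, and then $\varphi_v(\St_L(F'_i))=\varphi_v(\St_L(v))=G$ is immediate from $F'$ being a full supporting set. No refinement of vertices and no assembly across letters is needed.
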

\begin{proof}
Applying Theorem~\ref{thm:BlockSubgroups} to $H$, one finds a full supporting set $F\subseteq X^*$ of $H$ and a partition $P=\{F_i\}_{i=1}^{k}$ of $F$ such that $B=B_1\cdots B_k$ is a block subgroup of finite index in $H$ with supporting partition $P$, where $B_i=\Srist_H(F_i)$. By definition of a full supporting set, we have $\varphi_v(\St_H(v))=G$ for all $v\in F$. It could happen, however, that $\varphi_v(\St_H(F_i))<G$ for some $i\in \{1,\dots, k\}$ and some $v\in F_i$.

To solve this problem, let us consider the subgroup $L=\St_H(F)$, which is of finite index in $H$. Applying Theorem~\ref{thm:BlockSubgroups} to $L$ yields a full supporting set $F'\subseteq X^*$ for $L$ and a partition $P'=\{F'_i\}_{i=1}^{k'}$ such that $C=C_1\cdots C_{k'}$ is a block subgroup of finite index in $L$ with supporting partition $P'$, where $C_i=\Srist_L(F'_i)$. Since $L$ is of finite index in $H$, the set $F'$ is also a full supporting set for $H$ and $C$ is also a finite index subgroup of $H$.

Notice that for every $v\in F'$, there must exist some $u\in F$ such that $u\leq v$. Indeed, since both $F'$ and $F$ are full supporting sets for $H$, every $v\in F'$ must be comparable with an element of $F$, so there must exist some $u\in F$ such that either $u\leq v$ or $v<u$. The case $v<u$ is impossible, since $L$ fixes $u$ by definition, but $\varphi_v(\St_L(v))=G$, which implies that $\St_L(v)$ does not fix $u$, as $G$ acts spherically transitively on $X^*$.

Similarly, for every $u\in F$, there must exist some $v\in F'$ comparable with $u$, and by the same argument, we must have $u\leq v$.
Therefore, if there exists $h\in H$ such that $h\cdot u\ne u$, then we also have $h\cdot v \ne v$.
It follows that $\St_H(F')\leq \St_H(F)=L$.
Consequently, we have $\Srist_H(F'_i) = \Srist_L(F'_i)=C_i$ for all $1\leq i \leq k$, since $\Srist_H(F'_i)\leq \St_H(F')\leq L$.
Thus, to prove the result, it only remains to show that $\varphi_v(\St_H(F'_i))=G$ for all $i\in \{1,\dots, k'\}$ and all $v\in F'$.
Notice that since $\varphi_v(\St_L(F_i'))\leq \varphi_v(\St_H(F_i')) \leq G$, it suffices to show that $\varphi_v(\St_L(F_i'))=G$.

Now, let $v_1,v_2\in F_i'$ be two elements belonging to the same set $F'_i$ (for some $i\in\{1,\dots, k'\}$) and let $u_1,u_2\in F$ be the two unique elements such that $u_1\leq v_1$ and $u_2\leq v_2$. We claim that if $v_1\ne v_2$, then $u_1\ne u_2$. Suppose on the contrary that $u_1=u_2=u$, and let us write $v_1=uw_1$, $v_2=uw_2$ for some $w_1,w_2\in X^*$. Let $C_i\leq C$ be the diagonal subgroup of $C$ with supporting vertex set $F'_i$. As $C$ is of finite index in $H$, we know that $\varphi_u(C)$ is of finite index in $G$, and since $C_i$ is normal in $C$, the subgroup $\varphi_u(C_i)$ is an almost normal subgroup of $G$.
By Theorem~\ref{thm:StructureAlmostNormalSubgroups}, there exist $n\in \N$ and $W\subseteq X^n$ such that $\rist_G(n)'\cap \rist_G(W)\leq \varphi_u(C_i)\leq \rist_G(W)$.
Since $\varphi_{w_j}(\varphi_u(C_i)) = \varphi_{v_i}(C_i) \ne 1$ for $j=1,2$, we deduce that $w_1$ and $w_2$ are comparable with some elements of $W$.
It follows, using the fact that $\rist_G(m)'\leq \rist_G(n)'$ for all $m\geq n$, that there exist some $w_1'\geq w_1$ and $w_2'\geq w_2$ such that $\rist_G(w_1')', \rist_G(w_2')'\leq \rist_G(n)'\cap \rist_G(W) \leq \varphi_u(C_i)$. Since $w_1\ne w_2$, this implies that $\rist_{\varphi_u(C_i)}(w_1)$ is contained in the kernel of the map $\varphi_{w_2}$, which in turn implies that $\varphi_{v_2}$ is not injective on $C_i$, a contradiction to the fact that $C_i$ is a diagonal block subgroup on~$F'_i$.

Using the above, we can conclude that $\St_L(F_i') = \St_L(v)$ for all $i\in \{1,\dots, k'\}$ and all $v\in F_i'$. Indeed, it follows from Lemma~\ref{lemma:MinimalDependenceSetsInvariantByH} and the fact that the supporting vertex set of diagonal subgroups are minimal dependence sets that $P'$ is an $L$-invariant partition of $F'$. In particular, $\St_L(v)\cdot F_i'=F_i'$ for all $i\in \{1,\dots, k'\}$ and all $v\in F_i'$. Let $v_1,v_2\in F_i'$ be any pair of elements, let $g\in \St_L(v_1)$ be arbitrary and let $u_2\in F$ be such that $u_2\leq v_2$. Since $\St_L(v_1)\cdot F_i'=F_i'$, we have $g\cdot v_2\in F_i'$, and since $L\leq \St_H(F)$, we also have $g\cdot u_2=u_2$. By the above, this implies that $g\cdot v_2=v_2$. We conclude that $\St_L(v_1)$ fixes every element of $F_i'$.

To finish the proof, we simply need to notice that for all $i\in \{1,\dots, k'\}$ and all $v\in F_i'$, we have $\varphi_v(\St_L(F_i')) = \varphi_v(\St_L(v)) = G$.
\end{proof}

\section{Regular block subgroups of groups with the SIP}

Theorem~\ref{thm:BlockSubgroups} gives us conditions under which all finitely generated subgroups of a self-replicating branch group are virtually block subgroups. The aim of this section is to obtain conditions under which one can obtain the stronger conclusion that all finitely generated subgroups are virtually \emph{regular} block subgroups (see Definition~\ref{defn:RegularBlock}).

We begin with a lemma about diagonal block subgroups.

\begin{lemma}\label{lemma:DiagonalsSendRistToRist}
Let $G\leq \Aut(X^*)$ be a self-replicating branch group acting tree-primitively on $X^*$, let $H\leq G$ be a finitely generated subgroup and let $F\subseteq X^*$ be a full supporting set for $H$. Let $D\leq H$ be a diagonal block subgroup with supporting vertex set $V\subseteq X^*$. Let us further suppose that $D$ is normalised by $\St_H(V)$ and that $\varphi_v(\St_H(V))=G$ for all $v\in V$. Then, for every $v,w\in V$, the map
\begin{align*}
\alpha_{vw}\colon \varphi_v(D) &\rightarrow \varphi_w(D)\\
\varphi_v(g) &\mapsto \varphi_w(g)
\end{align*}
is a well-defined group isomorphism such that, for every $u\in X^*$, there exists $u'\in X^{|u|}$ such that $\alpha_{vw}(\rist_{\varphi_v(D)}(u)) = \rist_{\varphi_w(D)}(u')$.
\end{lemma}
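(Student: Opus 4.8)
\emph{Well-definedness and bijectivity.} Since $D$ is a diagonal block subgroup, $\varphi_v$ and $\varphi_w$ are injective on $D$; hence $\varphi_v(g_1)=\varphi_v(g_2)$ forces $g_1=g_2$ and so $\varphi_w(g_1)=\varphi_w(g_2)$, which shows $\alpha_{vw}$ is well defined. It is a homomorphism because $\varphi_v$ and $\varphi_w$ are homomorphisms on $\St_G(v)\supseteq\St_H(V)$ and $\St_G(w)\supseteq\St_H(V)$ respectively, and $\alpha_{wv}$ is an inverse for it. We may and do assume $D\neq 1$, the statement being trivial otherwise.

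\emph{$\alpha_{vw}$ extends to an automorphism of $G$.} Write $L=\St_H(V)$, and note that $\varphi_v|_L,\varphi_w|_L\colon L\to G$ are homomorphisms, surjective by hypothesis. The images $\varphi_v(D)$ and $\varphi_w(D)$ are finite index normal subgroups of $G$: they are normal as images of $D\trianglelefteq L$ under these surjections, and of finite index because $D$ is a diagonal block subgroup, so e.g.\ $\varphi_v(D)$ has finite index in $\varphi_v(\St_G(v))=G$. Being non-trivial almost normal subgroups of a branch group, by Corollary~\ref{cor:CenterAlmostNormalIsTrivial} they are infinite and have trivial centre. In particular $C_G(\varphi_v(D))$ meets $\varphi_v(D)$ trivially, hence is finite (as $\varphi_v(D)$ has finite index in $G$) and normal in $N_G(\varphi_v(D))=G$; since a branch group has no non-trivial finite normal subgroup (by Propositions~\ref{prop:NormalSubgroupsContainRist'} and~\ref{prop:R'Infinite}), we get $C_G(\varphi_v(D))=1$, and likewise $C_G(\varphi_w(D))=1$. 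From this I claim $\ker(\varphi_v|_L)=C_L(D)=\ker(\varphi_w|_L)$: on the one hand $\ker(\varphi_v|_L)$ and $D$ are normal subgroups of $L$ meeting trivially (injectivity of $\varphi_v$ on $D$), hence commute, so $\ker(\varphi_v|_L)\leq C_L(D)$; on the other hand $\varphi_v(C_L(D))\leq C_G(\varphi_v(D))=1$, so $C_L(D)\leq\ker(\varphi_v|_L)$, and the same argument applies with $w$ in place of $v$. Therefore $\varphi_v|_L$ and $\varphi_w|_L$ have the same kernel, and $\theta:=\varphi_w\circ(\varphi_v|_L)^{-1}$ is a well-defined automorphism of $G$ satisfying $\theta(\varphi_v(\ell))=\varphi_w(\ell)$ for all $\ell\in L$; in particular $\theta$ restricts to $\alpha_{vw}$ on $\varphi_v(D)$ and $\theta(\varphi_v(D))=\varphi_w(D)$.

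\emph{The automorphism $\theta$ is geometric, and conclusion.} This is where tree-primitivity enters. Composing the inclusion $G\hookrightarrow\Aut(X^*)$ with $\theta$ produces another faithful branch action of $G$ on $X^*$, with the same image $G$; by the uniqueness of the branch action of a tree-primitive branch group (see the remark following Definition~\ref{defn:treeprim}, which relies on Rubin's theorem~\cite{Rubin96} and~\cite{GrigorchukWilson03b}), there is $\psi\in\Aut(X^*)$ with $\theta(g)=\psi g\psi^{-1}$ for all $g\in G$. As $\psi$ is a rooted-tree automorphism it is level-preserving and $\psi\rist_{\Aut(X^*)}(u)\psi^{-1}=\rist_{\Aut(X^*)}(\psi(u))$ with $|\psi(u)|=|u|$; intersecting with $\psi G\psi^{-1}=\theta(G)=G$ gives $\theta(\rist_G(u))=\rist_G(\psi(u))$. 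Since $\rist_{\varphi_v(D)}(u)=\varphi_v(D)\cap\rist_G(u)$ and $\theta$ is bijective,
\[
\alpha_{vw}(\rist_{\varphi_v(D)}(u))=\theta(\varphi_v(D))\cap\theta(\rist_G(u))=\varphi_w(D)\cap\rist_G(\psi(u))=\rist_{\varphi_w(D)}(\psi(u)),
\]
so $u'=\psi(u)$ does the job. The main obstacle is precisely this last step, namely controlling the automorphism $\theta$ through uniqueness of the branch action; as an alternative one could try to show directly that $\theta$ sends $\rist_G(u)$ to a rigid stabiliser of a level-$|u|$ vertex by applying Theorem~\ref{thm:StructureAlmostNormalSubgroups} and tree-primitivity to the pairwise-commuting almost normal subgroups $\theta(\rist_G(t))$ with $t$ ranging over the $|u|$-th level, as in the proof of Lemma~\ref{lemma:ExistsVertexWhoseProjectionIsFiniteIndex}, but this requires some care to identify the level, since a transversal of $X^*$ is not determined by its cardinality.
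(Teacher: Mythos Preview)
Your proof is correct but follows a genuinely different route from the paper's. The paper never extends $\alpha_{vw}$ to an automorphism of $G$; instead, for a fixed $u$, it observes directly that $\alpha_{vw}(\rist_{\varphi_v(D)}(u))$ is almost normal in $G$ (since conjugation by $\varphi_w(\ell)$, for $\ell\in L$, corresponds via $\alpha_{vw}$ to conjugation of $\rist_{\varphi_v(D)}(u)$ by $\varphi_v(\ell)$), applies Theorem~\ref{thm:StructureAlmostNormalSubgroups} to locate it inside some $\rist_G(V')$ with $V'\subseteq X^n$, and then \emph{counts}: the $G$-conjugates of $\rist_{\varphi_v(D)}(u)$ either coincide or commute and there are exactly $|X|^{|u|}$ of them, hence the same holds for the image, forcing the $G$-translates of $V'$ to form a $G$-invariant partition of $X^n$ into $|X|^{|u|}$ parts. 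Tree-primitivity then identifies this partition as $\{zX^{n-|u|}\}_{z\in X^{|u|}}$, producing $u'\in X^{|u|}$ with $\alpha_{vw}(\rist_{\varphi_v(D)}(u))\leq\rist_{\varphi_w(D)}(u')$, and the reverse inclusion follows by swapping $v$ and $w$. This is essentially the ``alternative'' you sketch at the end; the level is pinned down by the cardinality $|X|^{|u|}$ of the invariant partition, which is why no ambiguity about transversals arises.

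Your approach buys conceptual clarity: the identity $\ker(\varphi_v|_L)=C_L(D)=\ker(\varphi_w|_L)$ is a clean way to see that $\alpha_{vw}$ globalises to $\theta\in\Aut(G)$, and once $\theta$ is realised by some $\psi\in\Aut(X^*)$ the conclusion is immediate and uniform in $u$. The cost is that you lean on the unproved remark following Definition~\ref{defn:treeprim}: the paper asserts, but does not demonstrate, that tree-primitivity forces uniqueness of the branch action, and in particular that every abstract automorphism of $G$ is induced by a rooted-tree automorphism. The passage from Rubin's theorem (which a priori only yields a homeomorphism of the boundary $X^\infty$) to an element of $\Aut(X^*)$ is exactly where care is required. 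The paper's direct argument via Theorem~\ref{thm:StructureAlmostNormalSubgroups} and the conjugate count is thus more self-contained, while yours is shorter and more structural but depends on a stronger external input.
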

\begin{proof}
The map $\alpha_{vw}$ is a well-defined isomorphism, since $D$ is a diagonal block subgroup with supporting vertex set $V$, so that the maps $\varphi_v$ and $\varphi_w$ are isomorphisms onto their images. Now, let $u\in X^*$ be any element. Since, by hypothesis, $\St_H(V)$ normalises $D$, the subgroup $\rist_{\varphi_v(D)}(u)$ must be almost normal in $\varphi_v(\St_H(V))=G$. Consequently, the subgroup $\alpha_{vw}(\rist_{\varphi_v(D)}(u))$ must also be an almost normal subgroup of $\varphi_w(\St_H(V))=G$, where we use here the fact that any conjugate of $\alpha_{vw}(\rist_{\varphi_v(D)}(u))$ by an element of $G$ must be the image by $\alpha_{vw}$ of a conjugate of $\rist_{\varphi_v(D)}(u)$ by a (possibly different) element of $G$.

By Theorem~\ref{thm:StructureAlmostNormalSubgroups}, there exist $n\in \N$ and $V'\subseteq X^n$ such that we have $\rist_G(n)'\cap \rist_G(V')\leq \alpha_{vw}(\rist_{\varphi_v(D)}(u)) \leq \rist_G(V')$. Since the conjugates of $\rist_{\varphi_v(D)}(u)$ by elements of $G$ either coincide or commute, with exactly $|X|^{|u|}$ distinct conjugates, the same must be true of $\alpha_{vw}(\rist_{\varphi_v(D)}(u))$, using once again the fact that conjugates of $\alpha_{vw}(\rist_{\varphi_v(D)}(u))$ are in correspondence with the conjugates of $\rist_{\varphi_v(D)}(u)$. This implies that the translates of $V'$ under the action of $G$ are either equal to $V'$ or disjoint, with exactly $|X|^{|u|}$ different translates. Using the fact that $G$ acts transitively on $X^n$, we can conclude that the $|X|^{|u|}$ distinct translates of $V'$ form a partition of $X^n$. Since the action of $G$ on $X^*$ is tree-primitive, this partition must in fact be equal to the partition $\{zX^{n-|u|}\}_{z\in X^{|u|}}$, which means that there exists $u'\in X^{|u|}$ such that $\alpha_{vw}(\rist_{\varphi_v(D)}(u)) \leq \rist_{\varphi_w(D)}(u')$.

Symmetrically, one obtains that $\alpha_{wv}(\rist_{\varphi_w(D)}(u')) \leq \rist_{\varphi_v(D)}(u'')$ for some $u''\in X^{|u|}$. However, it is obvious from their definitions that the maps $\alpha_{vw}$ and $\alpha_{wv}$ are inverses of one another, so that $u''=u$. We thus have $\alpha_{vw}(\rist_{\varphi_v(D)}(u)) \leq \rist_{\varphi_w(D)}(u') \leq \alpha_{vw}(\rist_{\varphi_v(D)}(u))$, which concludes the proof.
\end{proof}

Using the previous lemma, we shall see that under good circumstances, one can decompose diagonal block subgroups as products of smaller diagonal block subgroups.

\begin{lemma}\label{lemma:FromDiagonalToBlockWithGoodProjection}
Let $G\leq \Aut(X^*)$ be a self-replicating branch group acting tree-primitively on $X^*$, let $H\leq G$ be a finitely generated subgroup and let $F\subseteq X^*$ be a full supporting set for $H$. Let $D\leq H$ be a diagonal block subgroup with supporting vertex set $V\subseteq X^*$ such that $D$ is normalised by $\St_H(V)$ and $\varphi_v(\St_H(V))=G$ for all $v\in V$. For every $n\in \N$, there exists a subgroup $D_n\leq D$ of finite index in $D$ and a partition $P=\{V_i\}_{i=1}^{|X|^n}$ of $VX^{n}$ such that $D_n$ is a block subgroup with supporting partition $P$ and for all $v\in V$, $z\in X^n$,
\[\varphi_{vz}(D_n) = \varphi_z(\rist_{\varphi_v(D)}(z)).\]
\end{lemma}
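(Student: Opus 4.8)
The plan is to induct on $n$, with the base case $n=0$ being trivial ($D_0 = D$, $P = \{V\}$), and to obtain the inductive step by first handling the case $n=1$ and then iterating. So the heart of the matter is to construct, from the diagonal block subgroup $D$ with supporting vertex set $V$, a finite-index subgroup $D_1 \leq D$ that is a block subgroup supported on a partition of $VX$ with the prescribed projections.

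\textbf{Construction for $n=1$.} Fix $v_0 \in V$ and look at $\varphi_{v_0}(D) \leq G$, which is a finite-index, in fact almost normal, subgroup of $G$ (its normaliser contains $\varphi_{v_0}(\St_H(V)) = G$). Consider the subgroups $\rist_{\varphi_{v_0}(D)}(x)$ for $x \in X$; they pairwise commute, and by Proposition~\ref{prop:FiniteIndexSubgroupsSphericallyTransitiveAction} (or directly, since $\varphi_{v_0}(D)$ contains $\rist_G(m)'$ for some $m$ by Theorem~\ref{thm:StructureAlmostNormalSubgroups}) their product $R := \prod_{x \in X}\rist_{\varphi_{v_0}(D)}(x) = \rist_{\varphi_{v_0}(D)}(1)$ is of finite index in $\varphi_{v_0}(D)$. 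Pull this back: let $\tilde R = (\varphi_{v_0}|_D)^{-1}(R) \leq D$, which is of finite index in $D$. Now I want to use Lemma~\ref{lemma:DiagonalsSendRistToRist}: for each pair $v, w \in V$ the map $\alpha_{vw}$ carries $\rist_{\varphi_v(D)}(x)$ onto $\rist_{\varphi_w(D)}(x')$ for some $x' \in X$ depending on $v, w, x$. Spherical transitivity of $G$ on $X$ together with the fact that $\alpha_{vw}$ is induced by the identity on $D$ forces the permutation $x \mapsto x'$ to be independent of the element of $D$; call it $\pi_{vw} \in \mathrm{Sym}(X)$. These permutations satisfy the cocycle identity $\pi_{uw} = \pi_{vw}\pi_{uv}$, and $\pi_{vv} = \mathrm{id}$. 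Hence the sets $V_x := \{\, v\,\pi_{v_0 v}(x) \mid v \in V\,\}$ for $x \in X$ form a partition $P$ of $VX$ into $|X|$ blocks, each of size $|V|$.

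\textbf{Verifying $D_1 := \tilde R$ is a block subgroup for $P$.} By definition $D \leq \Srist_H(V)$, so $\tilde R \leq \Srist_H(V)$; the condition $\varphi_{v_0}(\tilde R) = R = \prod_x \rist_{\varphi_{v_0}(D)}(x)$ means that for $g \in \tilde R$ the section $\varphi_{v_0}(g)$ decomposes as a commuting product of its pieces $\varphi_{v_0 x}(g) \in \rist_{\varphi_{v_0}(D)}(x)$, and transporting via $\alpha_{v_0 v}$ this says $\varphi_v(g)$ sits in $\prod_x \rist_{\varphi_v(D)}(\pi_{v_0 v}(x))$ for each $v \in V$. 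Therefore $\tilde R \leq \Srist_H(V_x \cup (\text{the other } V_{x'})) = \Srist_H(VX)$, and we may write $\tilde R = \prod_{x \in X} \tilde R_x$ where $\tilde R_x := \Srist_{\tilde R}(V_x)$ consists of elements all of whose sections outside $V_x$ are trivial. One checks $\tilde R_x$ is a diagonal block subgroup with supporting vertex set $V_x$: it lies in $\Srist_H(V_x)$; for a vertex $u = v\,\pi_{v_0 v}(x) \in V_x$ the projection $\varphi_u(\tilde R_x) = \rist_{\varphi_v(D)}(\pi_{v_0 v}(x))$ is finite-index in $G$ (it contains $\rist_G(m - |u| + |v|)'$ after applying $\varphi$); and $\varphi_u$ is injective on $\tilde R_x$ because $\varphi_v$ is injective on $D \supseteq \tilde R_x$ and $\varphi_v(\tilde R_x) = \rist_{\varphi_v(D)}(\pi_{v_0 v}(x))$ equals the image of $\varphi_u(\tilde R_x)$ under $\varphi_{\pi_{v_0 v}(x)}$, which is injective on a rigid stabiliser. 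This also gives precisely $\varphi_{vz}(\tilde R) = \varphi_{vz}(\tilde R_z) = \rist_{\varphi_v(D)}(\pi_{v_0 v}(z))$... but wait — we need the projection formula exactly as stated, $\varphi_{vz}(D_1) = \varphi_z(\rist_{\varphi_v(D)}(z))$ with the \emph{same} letter $z$. This is a mismatch with the permutations $\pi_{v_0 v}$ I introduced.

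\textbf{The main obstacle} is exactly this discrepancy, and its resolution is where I expect to spend real effort. The cleanest way out is to relabel: the lemma only asserts \emph{existence} of a partition $P = \{V_i\}$, so I am free to index the blocks however I like; the real content of the projection formula is that for each $v \in V$ and each block index, the section at the corresponding descendant of $v$ is the appropriate one-letter rigid stabiliser \emph{of $\varphi_v(D)$}. Unwinding: set $V_i$ to be indexed by $i \in X$ via $V_i = \{v\, \sigma_v(i) : v \in V\}$ where $\sigma_v := \pi_{v_0 v}^{-1}$, no — I should instead just observe that after Lemma~\ref{lemma:DiagonalsSendRistToRist} one may, by an initial change of the chosen coset representatives inside $D$ (equivalently, precomposing with a tree automorphism permuting $X$ below each $v_i$), arrange all the permutations $\pi_{v_0 v}$ to be trivial; this is legitimate because the defining properties of a diagonal block subgroup and the hypothesis $\varphi_v(\St_H(V)) = G$ are stable under such a relabelling, and it is exactly the normality of $D$ under $\St_H(V)$, which acts transitively-enough on the tree, that lets us do it. Concretely, since $\varphi_v(\St_H(V)) = G$ acts transitively on $X$, after possibly replacing $V$ by $hV$ for a suitable $h \in \St_H(V)$ — which does not change $D$ as a subgroup — we get $\pi_{v_0 v} = \mathrm{id}$ for all $v$, and then $V_z = \{vz : v \in V\} = VX$ sorted by last letter, and the formula $\varphi_{vz}(D_1) = \rist_{\varphi_v(D)}(z) = \varphi_z(\rist_{\varphi_v(D)}(z))$ holds verbatim. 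Then the inductive step from $n$ to $n+1$ is just applying the $n=1$ construction to each diagonal piece $\tilde R_x$ (which again satisfies the hypotheses, being normalised by $\St_H(V_x)$ — this needs a short check using that $\St_H(V) \leq \St_H(V_x)$ up to finite index and that $\tilde R_x$ is characteristic-ish inside $\tilde R$) and taking the product, with the projection formula following from $\varphi_{vzx}(\cdot) = \varphi_x(\varphi_{vz}(\cdot))$ and the two-step identity $\rist_{\varphi_v(D)}(zx) = \rist_{\rist_{\varphi_v(D)}(z)}(x)$ after applying $\varphi_z$.
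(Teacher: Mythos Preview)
Your construction is essentially the paper's: fix a base vertex $v_0\in V$, use Lemma~\ref{lemma:DiagonalsSendRistToRist} to obtain for each $w\in V$ a bijection $x\mapsto y(w,x)$ of $X$ (your $\pi_{v_0 w}$), set $V_x=\{wy(w,x):w\in V\}$, define $D_x=\{g\in D:\varphi_{v_0}(g)\in\rist_{\varphi_{v_0}(D)}(x)\}$, and take $D_1=\prod_{x\in X}D_x$. The paper then checks directly that each $D_x$ is a diagonal block on $V_x$ and that $D_1$ has finite index.

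The ``main obstacle'' you identify is not real; it comes from a bookkeeping slip. You write $\varphi_{vz}(\tilde R)=\varphi_{vz}(\tilde R_z)$, but $vz$ does not in general lie in your block $V_z$: by your own definition $V_x=\{w\pi_{v_0 w}(x):w\in V\}$, so $vz\in V_{x'}$ precisely when $\pi_{v_0 v}(x')=z$, i.e.\ $x'=\pi_{v_0 v}^{-1}(z)$. Hence
\[
\varphi_{vz}(D_1)=\varphi_{vz}(\tilde R_{x'})=\varphi_z\bigl(\varphi_v(\tilde R_{x'})\bigr)=\varphi_z\bigl(\alpha_{v_0 v}(\rist_{\varphi_{v_0}(D)}(x'))\bigr)=\varphi_z\bigl(\rist_{\varphi_v(D)}(\pi_{v_0 v}(x'))\bigr)=\varphi_z\bigl(\rist_{\varphi_v(D)}(z)\bigr),
\]
exactly as stated, with no relabelling needed. (This is also how the paper obtains the formula, via its equation~\eqref{eq:PhiDxIsRist}.)

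Your proposed fix is in any case not usable: replacing $V$ by $hV$ for $h\in\St_H(V)$ changes nothing, since $hV=V$ by definition of $\St_H(V)$; and ``precomposing with a tree automorphism permuting $X$ below each $v_i$'' takes you outside $G$. Once the computation above is corrected, the induction from $n$ to $n+1$ goes through as both you and the paper indicate, using $\rist_{\varphi_v(D)}(zx)=\rist_{\rist_{\varphi_v(D)}(z)}(x)$ and applying the $n=1$ step to each diagonal factor; Lemma~\ref{lemma:DiagonalsSendRistToRist} already gives the rigid-stabiliser correspondence at every depth, so no separate verification of the normaliser hypothesis for the pieces is needed.
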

\begin{proof}
Let us fix $v\in V$. By Lemma~\ref{lemma:DiagonalsSendRistToRist}, for every $w\in V$ and for every $x\in X$, there exists an element $y(w,x)\in X$ such that $\alpha_{vw}(\rist_{\varphi_v(D)}(x))=\rist_{\varphi_w(D)}(y(w,x))$, where $\alpha_{vw}\colon \varphi_v(D) \rightarrow \varphi_w(D)$ is the isomorphism described in Lemma~\ref{lemma:DiagonalsSendRistToRist}. As $\alpha_{vw}$ is an isomorphism, it is clear that if $x,x'\in X$ are such that $x\ne x'$, then $y(w,x)\ne y(w,x')$, which means that the map $y(w,\cdot)\colon X \rightarrow X$ is a bijection for all $w\in V$. Thus, if for $x\in X$ we define $V_x=\{wy(w,x)\in X^* \mid w\in V\}$, the collection $P=\{V_x\}_{x\in X}$ forms a partition of $VX$.

For every $x\in X$, let us define $D_x=\{g\in D \mid \varphi_v(g)\in \rist_{\varphi_v(D)}(x)\}$. We want to show that $D_x$ is a diagonal block subgroup with supporting vertex set $V_x$. For this, we first need to show that $D_x\leq \Srist_G(V_x)$. Since $D_x\leq D$, we already have $D_x\leq \St_G(V)$. Now, let $w\in V$ be any element. We have
\begin{align*}
\varphi_w(D_x)=\alpha_{vw}(\varphi_v(D_x)) =\alpha_{vw}(\rist_{\varphi_v(D)}(x)) = \rist_{\varphi_w(D)}(y(w,x)), \tag{\textasteriskcentered} \label{eq:PhiDxIsRist}
\end{align*}
which implies that $D_x\leq \St_G(VX)$ and that $\varphi_{z}=1$ for all $z\notin V_x$. Thus, we have $D_x\leq \Srist_G(V_x)$.

The second point we need to verify to show that $D_x$ is a diagonal block subgroup is that $\varphi_z(D_x)$ is of finite index in $G$ for all $z\in V_x$. By assumption, $\varphi_w(D)$ is of finite index in $G$ for all $w\in V$. It follows that for all $x\in X$, $\rist_{\varphi_w(D)}(x)$ is of finite index in $\rist_G(x)$ and thus that $\varphi_x(\rist_{\varphi_w(D)}(x))$ is of finite index in $\varphi_x(\rist_G(x))$. Since $G$ is a self-replicating branch group, $\varphi_x(\rist_G(x))$ is of finite index in $G$, which means that $\varphi_x(\rist_{\varphi_w(D)}(x))$ is of finite index in $G$. Let $z\in V_x$ be any element. Then, there exists $w\in V$ such that $z=wy(w,x)$. By the equation \eqref{eq:PhiDxIsRist} above, we have
\[\varphi_{z}(D_x) = \varphi_{y(w,x)}(\varphi_w(D_x)) = \varphi_{y(w,x)}(\rist_{\varphi_w(D)}(y(w,x))),\]
which is of finite index in $G$.

To finish showing that $D_x$ is a diagonal block subgroup, we still need to verify that $\varphi_z$ is injective on $D_x$ for all $z\in V_x$. Let $z=wy(w,x)$ be any element of $V_x$, and let $g\in D_x$ be such that $\varphi_z(g)=1$. Then, $\varphi_{y(w,x)}(\varphi_w(g))=1$. Since $\varphi_w(g)\in \rist_{\varphi_w(D)}(y(w,x))$, this implies that $\varphi_w(g)=1$, which in turns implies that $g=1$, as $\varphi_w$ is injective on $D$ by assumption.

We have thus shown that for every $x\in X$, the subgroup $D_x$ is a diagonal block subgroup with supporting vertex set $V_x$. Let $D_1=\prod_{x\in X}D_x$. By construction, $D_1$ is a block subgroup with supporting partition $P=\{V_x\}_{x\in X}$ such that for all $w\in V$ and all $x\in X$,
\[\varphi_{wx}(D_1) = \varphi_x(\rist_{\varphi_w(D)}(x)).\]
To finish proving the result for $n=1$, it thus only remains to show that $D_1$ is of finite index in $D$. As $\varphi_v$ is injective on $D$ for every $v\in V$, this is equivalent to showing that $\varphi_v(D_1)$ is of finite index in $\varphi_v(D)$, and since $\varphi_v(D)$ is of finite index in $G$, this is equivalent to showing that $\varphi_v(D_1)$ is of finite index in $G$. By construction, $\varphi_v(D_1) = \prod_{x\in X}\rist_{\varphi_{v}(D)}(x)$. We have seen above that $\rist_{\varphi_v(D)}(x)$ is of finite index in $\rist_G(x)$. Therefore, $\prod_{x\in X}\rist_{\varphi_{v}(D)}(x)$ is of finite index in $\rist_G(1)$, which is of finite index in $G$, since $G$ is a branch group.

We have just shown that the result holds for $n=1$, which is sufficient to show that it holds for all $n\in \N$, using induction. Indeed, suppose that the result holds for some $n\in \N$ and let us show that it holds for $n+1$. By assumption, there exists $D_n\leq D$ of finite index and a partition $P=\{V_i\}_{i=1}^{|X|^{n}}$ of $VX^{n}$ such that $D_n$ is a block subgroup with supporting partition $P$. For every $1\leq i \leq |X|^{n}$, let $D_{V_i}\leq D_n$ be the diagonal block subgroup with supporting vertex set $V_i$. By our argument for $n=1$, one finds a partition $P_{V_i}=\{W_{i,j}\}_{j=1}^{|X|}$ of $V_iX$ and a finite index subgroup $D_{V_i,1}\leq D_{V_i}$ such that $D_{V_i,1}$ is a block subgroup with supporting partition $P_{V_i}$. One can then define $D_{n+1}$ as the product of the $D_{V_i,1}$. It is then immediate to check that $D_{n+1}$ is a block subgroup for the partition $\{W_{i,j}\mid 1\leq i \leq |X|^n, 1\leq j \leq |X|\}$ and is of finite index in $D_n$, and thus in $D$. To finish the proof, we then only need to remark that for all $v\in V$, $z\in X^{n}$ and $x\in X$, we have by construction
\[\varphi_{vzx}(D_{n+1}) = \varphi_x(\rist_{\varphi_{vz}(D_n)}(x)),\]
and since $\varphi_{vz}(D_n) = \varphi_z(\rist_{\varphi_v(D)}(z))$, we have
\begin{align*}
\varphi_{vzx}(D_{n+1}) &= \varphi_x(\rist_{\varphi_z(\rist_{\varphi_v(D)}(z))}(x))\\
&=\varphi_x(\varphi_z(\rist_{\varphi_v(D)}(zx)))\\
&=\varphi_{zx}(\rist_{\varphi_{v}(D)}(zx)),
\end{align*}
which shows that $D_{n+1}$ has the required property.
\end{proof}

We are now in position to prove that, under certain conditions, all finitely generated subgroups of a self-replicating branch groups with the subgroup induction property are virtually regular block subgroups.

\begin{thm}\label{thm:RegularBlockSubgroup}
Let $G\leq \Aut(X^*)$ be a finitely generated self-replicating regular branch group with trivial branch kernel (see Definition~\ref{defn:TrivialBranchKernel}) and the subgroup induction property acting tree-primitively on $X^*$.
Let $K$ be its maximal branching subgroup (see Definition~\ref{defn:MaxBranchingSubgroup} and Corollary~\ref{cor:MaximalBranchingSubgroup}).
Then, every finitely generated subgroup $H\leq G$ is virtually a regular block subgroup over $K$ (i.e., it admits a finite index subgroup that is a regular block subgroup over $K$).
\end{thm}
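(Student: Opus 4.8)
The strategy is to take the block decomposition already produced by Corollary~\ref{cor:BlockSubgroupNiceProjections} and push each of its diagonal components far enough down the tree, using Lemma~\ref{lemma:FromDiagonalToBlockWithGoodProjection}, so that all of its sections become exactly $K$; the extra hypothesis doing the work is the triviality of the branch kernel, which is precisely what turns a \emph{finite index} section into the \emph{full} rigid stabiliser of a sufficiently deep vertex, and hence into $K$ by Corollary~\ref{cor:MaximalBranchingSubgroup}. Concretely, first I would apply Corollary~\ref{cor:BlockSubgroupNiceProjections} to $H$, obtaining a full supporting set $F\subseteq X^*$, a partition $P=\{F_i\}_{i=1}^{k}$ of $F$, and a finite index subgroup $B\leq H$ that is a block subgroup with supporting partition $P$ and satisfies $\varphi_v(\St_H(F_i))=G$ for all $i$ and all $v\in F_i$. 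Inspecting the construction behind that corollary (Lemma~\ref{lemma:BunchOfDiagonalBlocks} and Theorem~\ref{thm:BlockSubgroups}), the diagonal component of $B$ with supporting vertex set $F_i$ can be taken to be $D_i:=\Srist_H(F_i)$; since $\St_H(F_i)$ fixes $F_i$ pointwise it normalises $D_i$, so together with $\varphi_v(\St_H(F_i))=G$ each $D_i$ satisfies the hypotheses of Lemma~\ref{lemma:FromDiagonalToBlockWithGoodProjection}.

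The next point is the choice of depth. For each $i$ and each $v\in F_i$ the section $\varphi_v(D_i)$ has finite index in $G$ by condition~(ii) of Definition~\ref{defn:diag}, so the intersection $\widetilde N$ of the normal cores in $G$ of these finitely many subgroups is a finite index normal subgroup of $G$. By the trivial branch kernel hypothesis (Definition~\ref{defn:TrivialBranchKernel}) there is $m\in\N$ with $\rist_G(m)\leq\widetilde N$, and by Corollary~\ref{cor:MaximalBranchingSubgroup} there is $N_0\in\N$ with $\varphi_z(\rist_G(z))=K$ whenever $|z|\geq N_0$; set $n=\max\{m,N_0\}$. Applying Lemma~\ref{lemma:FromDiagonalToBlockWithGoodProjection} to each $D_i$ with this $n$ yields a finite index subgroup $E_i\leq D_i$ and a partition $\{V_{i,j}\}_j$ of $F_iX^{n}$ such that $E_i$ is a block subgroup with that supporting partition and $\varphi_{vz}(E_i)=\varphi_z(\rist_{\varphi_v(D_i)}(z))$ for all $v\in F_i$, $z\in X^{n}$. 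For $z\in X^{n}$ we have $\rist_G(z)\leq\rist_G(n)\leq\rist_G(m)\leq\widetilde N\leq\varphi_v(D_i)$, hence $\rist_{\varphi_v(D_i)}(z)=\rist_G(z)$, and since $n\geq N_0$ this gives $\varphi_{vz}(E_i)=\varphi_z(\rist_G(z))=K$; thus $\varphi_w(E_i)=K$ for every $w$ in the supporting set $F_iX^{n}=\bigsqcup_j V_{i,j}$.

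Finally I would reassemble. As $F$ is an antichain and the $F_i$ are pairwise disjoint, and as each $\{V_{i,j}\}_j$ is the supporting partition of a block subgroup, the family $\{V_{i,j}\}_{i,j}$ consists of antichains that are pairwise ``cross-incomparable'', so $B':=E_1\cdots E_k$ --- the product over all pairs $(i,j)$ of the diagonal component of $E_i$ on $V_{i,j}$ --- is a block subgroup with supporting partition $\{V_{i,j}\}_{i,j}$. For $w\in V_{i,j}$, every section at $w$ of the other $E_{i'}$ (for $i'\neq i$) and of the diagonal components of $E_i$ other than the one on $V_{i,j}$ is trivial, so $\varphi_w(B')=\varphi_w(E_i)=K$ by the previous paragraph; hence $B'$ is a regular block subgroup over $K$ in the sense of Definition~\ref{defn:RegularBlock}. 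Since $E_i$ has finite index in $D_i=\Srist_H(F_i)$, and the $D_i$ pairwise commute with trivial pairwise intersections (Remark~\ref{rem:RistsCommute}), $B'=\prod_i E_i$ has finite index in $B=\prod_i D_i$, hence in $H$, which proves the theorem.

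The bookkeeping in the first and last steps is routine and largely implicit in the earlier sections. The one delicate point, and the only place the new hypotheses enter, is the choice of $n$: triviality of the branch kernel is exactly what upgrades ``$\varphi_v(D_i)$ has finite index in $G$'' to ``$\varphi_{vz}(E_i)=\rist_G(z)$ at depth $n$'', and Corollary~\ref{cor:MaximalBranchingSubgroup} then turns this into ``$\varphi_{vz}(E_i)=K$''. A secondary technical care point is verifying that the diagonal components of the block subgroup furnished by Corollary~\ref{cor:BlockSubgroupNiceProjections} may be taken to be the stabilised rigid stabilisers $\Srist_H(F_i)$ (so as to get the needed normalisation by $\St_H(F_i)$), which is why one must look into the proof of that corollary rather than only its statement.
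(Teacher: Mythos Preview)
Your proof is correct and follows essentially the same route as the paper's own argument: apply Corollary~\ref{cor:BlockSubgroupNiceProjections}, use the trivial branch kernel together with Corollary~\ref{cor:MaximalBranchingSubgroup} to choose a uniform depth $n$, then invoke Lemma~\ref{lemma:FromDiagonalToBlockWithGoodProjection} on each diagonal component and take the product. You are in fact slightly more explicit than the paper at the one place that deserves care---namely in justifying that the diagonal components may be taken as $\Srist_H(F_i)$ so that normalisation by $\St_H(F_i)$ is automatic---whereas the paper simply asserts ``Notice that each $D_i$ is normalised by $\St_H(F_i)$'' without comment.
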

\begin{proof}
Let $H\leq G$ be a finitely generated subgroup of $G$. By Corollary~\ref{cor:BlockSubgroupNiceProjections}, there exists a full supporting set $F\subseteq X^*$ for $H$ and a partition $P=\{F_i\}_{i=1}^{k}$ of $F$ such that $D=D_1\cdots D_k\leq H$ is a block subgroup of finite index with supporting partition $P$, where $D_i=\Srist_H(F_i)$, and $\varphi_v(\St_H(F_i))=G$ for all $i\in \{1,\dots, k\}$ and all $v\in F_i$.

By definition of a block subgroup, $\varphi_v(D)$ is of finite index in $G$ for all $v\in F$. Since we assumed that $G$ has trivial branch kernel, this means that for every $v\in F$, there exists $n_v\in \N$ such that $\rist_G(n_v)\leq \varphi_v(D)$. Therefore, for every $w\in X^m$ with $m\geq n_v$, we have $\rist_{\varphi_v(D)}(w)=\rist_G(w)$.
Since $K$ is the maximal branching subgroup, by Corollary~\ref{cor:MaximalBranchingSubgroup} there exists $n_0\in \N$ such that $K=\varphi_{v}(\rist_G(v))$ for any $v\in X^{n}$ with $n\geq n_0$.
Let $n_1=\max\{n_0, \max_{v\in F}\{n_v\}\}$. It follows that for all $w\in X^{n_1}$ and all $v\in F$, we have
\[\varphi_{w}(\rist_{\varphi_{v}(D)}(w)) = \varphi_w(\rist_G(w)) = K.\]

Notice that each $D_i=\Srist_H(F_i)$ is normalised by $\St_H(F_i)$. Therefore, by Lemma~\ref{lemma:FromDiagonalToBlockWithGoodProjection}, for every $D_i$, there exists a subgroup $B_i\leq D_i$ of finite index and a partition $P_i$ of $F_iX^{n_1}$ such that $B_i$ is a block subgroup with supporting partition $P$ and such that for all $v\in F_i$ and all $z\in X^{n_1}$,
\[\varphi_{vz}(B_i) = \varphi_z(\rist_{\varphi_v(D_i)}(z)) = \varphi_z(\rist_{\varphi_v(D)}(z)) = K,\]
where we used the fact that, as $D$ is a block subgroup, $\varphi_v(D_i) = \varphi_v(D)$ for any $1\leq i \leq k$ and $v\in F_i$. In other words, each $B_i$ is a regular block subgroup over $K$ with supporting partition $P_i$. As the sets $F_iX^{n_1}$ are pairwise disjoint, the union of the partitions $P_i$ gives us a partition $P'=\bigcup_{i=1}^{k}P_i$ of $F_iX^{n_1}$, which is a set of pairwise incomparable elements. Since each $B_i$ is a regular block subgroup over $K$ with supporting partition $P_i$, it is immediate to check that their product $B=\prod_{i=1}^{k}B_i$ is also a regular block subgroup over $K$, with supporting partition $P'$. Furthermore, as each $B_i$ is a finite index subgroup of $D_i$, $B$ must be a finite index subgroup of $D$ and thus of $H$.
\end{proof}

As an application, we obtain that all finitely generated subgroups of the Grigorchuk group and of the torsion GGS $p$-groups are virtually regular block subgroups.

\begin{cor}
Every finitely generated subgroup of  $\Grig=\langle a,b,c,d \rangle$ (see Example~\ref{example:GrigorchukGroup}) admits a regular block structure over $K=\langle [a,b] \rangle_{\Grig}$.
\end{cor}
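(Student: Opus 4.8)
The plan is to deduce this corollary directly from Theorem~\ref{thm:RegularBlockSubgroup} by verifying that the Grigorchuk group $\Grig$ satisfies all of its hypotheses. First I would recall from Example~\ref{example:GrigorchukGroup} that $\Grig$ is a finitely generated self-replicating regular branch group over $K=\langle[a,b]\rangle_{\Grig}$. Next I would check that $\Grig$ has the subgroup induction property; this is precisely the content of~\cite{GrigorchukWilson03} (see also the discussion following Definition~\ref{defn:SIP}). Tree-primitivity of the action of $\Grig$ on $\{0,1\}^*$ is Corollary~\ref{cor:GrigTreePrimitivity}.

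The remaining points to check are that $\Grig$ has trivial branch kernel and that $K$ is the \emph{maximal} branching subgroup. Triviality of the branch kernel for $\Grig$ is well known: it follows from the congruence subgroup property of $\Grig$ (which is even stronger, as noted in Remark~\ref{rem:CSP}), established in~\cite{delaHarpe00} or the original sources; so every finite index normal subgroup of $\Grig$ contains $\rist_{\Grig}(n)$ for some $n$. For the identification of the maximal branching subgroup, one uses Corollary~\ref{cor:MaximalBranchingSubgroup}: the maximal branching subgroup is the unique subgroup of the form $\varphi_v(\rist_{\Grig}(v))$ for $v$ deep enough in the tree, and one checks by a direct computation (already contained in the references on $\Grig$) that $\varphi_v(\rist_{\Grig}(v)) = K$ for all $v$ of length $\ge 1$, so that $K=\langle[a,b]\rangle_{\Grig}$ is indeed the maximal branching subgroup.

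Once all these hypotheses are verified, Theorem~\ref{thm:RegularBlockSubgroup} applies verbatim and yields that every finitely generated subgroup $H\le\Grig$ admits a finite index subgroup that is a regular block subgroup over $K$, which is the claimed statement. The main (and only genuine) obstacle here is purely bibliographical bookkeeping rather than mathematical: one must make sure that the cited facts about $\Grig$ are stated in exactly the form needed — in particular that the branch kernel of $\Grig$ is trivial and that $K=\langle[a,b]\rangle_{\Grig}$ is maximal for the regular branch property and not merely \emph{a} subgroup over which $\Grig$ is regular branch. With those references in place the proof is a one-line invocation of the theorem.

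\begin{proof}
By Example~\ref{example:GrigorchukGroup}, $\Grig$ is a finitely generated self-replicating regular branch group over $K=\langle[a,b]\rangle_{\Grig}$. It has the subgroup induction property by~\cite{GrigorchukWilson03}, and its action on $\{0,1\}^*$ is tree-primitive by Corollary~\ref{cor:GrigTreePrimitivity}. Moreover, $\Grig$ has the congruence subgroup property, and hence in particular trivial branch kernel. Finally, a direct computation (see~\cite{delaHarpe00}) shows that $\varphi_v(\rist_{\Grig}(v))=K$ for every non-empty $v\in\{0,1\}^*$, so by Corollary~\ref{cor:MaximalBranchingSubgroup}, $K$ is the maximal branching subgroup of $\Grig$. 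The claim now follows from Theorem~\ref{thm:RegularBlockSubgroup}.
\end{proof}
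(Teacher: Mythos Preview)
Your proposal is correct and follows essentially the same approach as the paper: verify each hypothesis of Theorem~\ref{thm:RegularBlockSubgroup} for $\Grig$ and invoke the theorem. The only cosmetic difference is in the justification that $K$ is the maximal branching subgroup: the paper cites \cite[Theorem~7.9]{BartholdiGrigorchukSunic03} directly, whereas you argue via Corollary~\ref{cor:MaximalBranchingSubgroup} and the computation $\varphi_v(\rist_{\Grig}(v))=K$ (for which \cite{BartholdiGrigorchukSunic03} is a safer reference than \cite{delaHarpe00}).
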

\begin{proof}
As discussed in Example~\ref{example:GrigorchukGroup}, $\Grig$ is a self-replicating branch group over $K$.
By~\cite[Theorem 7.9]{BartholdiGrigorchukSunic03}, $K$ is the maximal branching subgroup.
Furthermore, $\Grig$ has the subgroup induction property~\cite{GrigorchukWilson03} and a trivial branch kernel, as follows from~\cite[Theorem VIII.42]{delaHarpe00}.
Lastly, the action of $\Grig$ on $X^*$ is tree-primitive by Corollary~\ref{cor:GrigTreePrimitivity}.
The result then follows directly from Theorem~\ref{thm:RegularBlockSubgroup}.
\end{proof}
\begin{cor}
Let $p$ be a prime number and $G=G_E=\langle a,b\rangle \leq\Aut(X^*)$ be a torsion GGS-group for $X=\{0,\dots, p-1\}$ (see Example~\ref{example:GGS}).
Then every finitely generated subgroup of $G$ admits a regular block structure over $\gamma_3(G)$ if $E$ is symmetric, or over $G'$ if $E$ is not symmetric.
\end{cor}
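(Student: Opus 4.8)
The plan is to reduce this corollary to Theorem~\ref{thm:RegularBlockSubgroup} in the same way the Grigorchuk case is handled, by checking that a torsion GGS $p$-group satisfies each hypothesis of that theorem. First I would recall from Example~\ref{example:GGS} that $G=G_E$ is a finitely generated self-replicating group and that, by the stated criterion of Vovkivsky, the torsion assumption $\sum_{i=0}^{p-2}e_i\equiv 0\pmod p$ guarantees in particular that $E$ is not the constant vector (a constant vector would be either $0$, giving $b$ trivial, or nonzero with sum $(p-1)e\neq 0$ in $\Z/p\Z$ for $p$ odd); hence $d=p$ is an odd prime with $E$ non-constant, so the result of~\cite{DDFAG23} quoted in Example~\ref{example:GGS} tells us $G$ is regular branch over $\gamma_3(G)=\langle[G',G]\rangle$. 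Thus $G$ is a finitely generated self-replicating regular branch group.

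Next I would verify the remaining three hypotheses of Theorem~\ref{thm:RegularBlockSubgroup}. Tree-primitivity of the action of $G$ on $X^*$ is exactly Corollary~\ref{cor:GGSTreePrimitivity}. The subgroup induction property for torsion GGS $p$-groups is established in~\cite{FrancoeurLeemann20} (see also~\cite{Garrido16}), as recorded in the discussion following Definition~\ref{defn:SIP}. For the trivial branch kernel, I would invoke the congruence subgroup property of torsion GGS $p$-groups, which is known for these groups and, as noted in Remark~\ref{rem:CSP}, is strictly stronger than having trivial branch kernel; alternatively one cites directly that torsion GGS groups have trivial branch (indeed trivial rigid) kernel. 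With all hypotheses of Theorem~\ref{thm:RegularBlockSubgroup} in place, it applies and yields that every finitely generated subgroup $H\leq G$ admits a finite index subgroup that is a regular block subgroup over the maximal branching subgroup $K$ of $G$.

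It then only remains to identify $K$. Here I would split into the two cases of the statement. When $E$ is not symmetric, the relevant references on maximal branch subgroups of GGS groups (the work of Garrido--Uria-Albizuri and of the authors thanked in the acknowledgements, via~\cite{GarridoSunic23} and~\cite{DDFAG23}) give that $G'$ is the maximal branching subgroup, so Theorem~\ref{thm:RegularBlockSubgroup} produces a regular block structure over $G'$. When $E$ is symmetric, the same line of references shows $G$ is not regular branch over $G'$ but is regular branch over $\gamma_3(G)$, and moreover that $\gamma_3(G)$ is the maximal branching subgroup; hence Theorem~\ref{thm:RegularBlockSubgroup} gives a regular block structure over $\gamma_3(G)$.

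\textbf{Main obstacle.} Everything of substance has already been proved: the only real work is bookkeeping, namely matching the various known properties of torsion GGS $p$-groups (regular branch structure, SIP, trivial branch kernel, tree-primitivity) to the hypotheses of Theorem~\ref{thm:RegularBlockSubgroup} and, in particular, correctly citing the identification of the maximal branching subgroup in the symmetric versus non-symmetric cases. The one point where care is genuinely needed is this last dichotomy, since the maximal branching subgroup depends on whether $E$ is symmetric, and one must make sure the cited results indeed say that $\gamma_3(G)$ (resp.\ $G'$) is \emph{maximal} for the regular branch property, not merely that $G$ is regular branch over it.
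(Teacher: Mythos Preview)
Your overall strategy is exactly the paper's: verify that a torsion GGS $p$-group satisfies all the hypotheses of Theorem~\ref{thm:RegularBlockSubgroup} (self-replicating regular branch, tree-primitive via Corollary~\ref{cor:GGSTreePrimitivity}, SIP via~\cite{FrancoeurLeemann20}, trivial branch kernel via the congruence subgroup property from~\cite{FAGUA17}), apply the theorem, and then identify the maximal branching subgroup $K$ according to whether $E$ is symmetric or not.

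The one substantive difference is precisely the point you flag as the main obstacle. You propose to obtain the dichotomy ``$K=G'$ if $E$ is non-symmetric, $K=\gamma_3(G)$ if $E$ is symmetric'' by citation, but the references you name do not quite give the \emph{maximality} statement off the shelf. The paper instead argues it directly: it cites~\cite[Lemma~3.4 and proof of Theorem~3.7]{FAZR14} for the fact that $G$ is regular branch over $G'$ if and only if $E$ is non-symmetric, and then proves in a short self-contained computation that \emph{any} subgroup $L$ over which $G$ is regular branch must lie inside $G'$. The argument writes $l\in L$ as $a^n b^m h$ with $h\in G'$, considers $g=(l,1,\dots,1)\in L\cap\St_G(1)$, and uses the well-definedness modulo $p$ of the exponent counts of the $b_i=a^iba^{-i}$ in any word for $g$ (from~\cite[Theorem~2.10]{FAZR14}) to force $n=m=0$. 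Since $[G':\gamma_3(G)]=p$, this pins down $K$ as $G'$ or $\gamma_3(G)$ in the two cases. So your plan is correct in outline, but to complete it you should replace the vague citation by this concrete argument (or by a precise reference that actually asserts maximality).
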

\begin{proof}
Let $K$ be either $\gamma_3(G)$ if $E$ is symmetric or $G'$ is $E$ is not symmetric. We want to prove that $K$ is the maximal branching subgroup.
Since $p$ is prime and $G$ torsion, $E$ is a non-constant vector.
It follows, as discussed in Example~\ref{example:GGS}, that $G$ is a self-replicating branch group over $\gamma_3(G)$.
Moreover, $G$ is regular branch over $G'$ if and only if $E$ is not symmetric,~\cite[Lemma 3.4]{FAZR14} and~\cite[Proof of Theorem 3.7]{FAZR14}.
We claim that if a GGS-group $G$ is regular branch over a subgroup $L$, then $L$ is contained in $G'$.
Since $\gamma_3(G)$ has index $p$ in $G'$, this implies that $K$ is the maximal branching subgroup.

For $i\in\{0,\dots,p-1\}$, let $b_i\coloneq a^iba^{-i}$.
Let $l$ be an element of $L$. As $G/G'$ is isomorphic to $\langle a\rangle\times\langle b\rangle$, we have $l=a^nb^mh$ for some $h\in G'$ and we want to show that $n=m=0$.
By regular branchness, the element $g=(l,1,\dots,1)$ is in the intersection of $L$ and of $\St_G(1)=\langle b_0,\dots, b_{p-1}\rangle$.
By~\cite[Theorem 2.10]{FAZR14}, for $i\in\{0,\dots,p-1\}$, the number $w_i$ of $b_{i}$ appearing in any word in $\{b_0,\dots, b_{p-1}\}$ representing $g$ is well-defined modulo~$p$. Moreover, for $i\neq 1$ and modulo $p$, $w_i$ is also the number of $b=b_0$ appearing in any word representing $1$ (in the $i$-th position in $g=(l,1,\dots,1)$). We conclude that for $i\neq 1$ we have $w_i\equiv 0 \pmod p$.
But then, the number of $a$ in any word representing $\varphi_1(g)=l$ is equal to $\sum_{i\neq 1}w_ie_i\equiv 0\pmod p$, which implies $n=0$.
We also have that $m=0$. Indeed, otherwise we would have $w_1\not\equiv0$ modulo $p$. But then for $j\neq 1$ we would have $\varphi_j(g)\neq 1$ as soon as $e_j\neq 0$, which is absurd.

Furthermore, $G$ has the subgroup induction property~\cite{FrancoeurLeemann20} and a trivial branch kernel, as follows from~\cite[Theorem 2.6]{FAGUA17}.
Lastly, the action of $G$ on $X^*$ is tree-primitive by Corollary~\ref{cor:GGSTreePrimitivity}.
The result then follows directly from Theorem~\ref{thm:RegularBlockSubgroup}.
\end{proof}

\bibliography{FGLNBib}
\bibliographystyle{alpha}

\end{document}